\newtheorem{theorem}{Theorem}[section]
\newtheorem{corollary}[theorem]{Corollary}
\newtheorem{proposition}[theorem]{Proposition}
\newtheorem{lemma}[theorem]{Lemma}
\theoremstyle{definition}
\newtheorem{definition}[theorem]{Definition}
\theoremstyle{remark}
\newtheorem{remark}[theorem]{Remark}
\newtheorem{example}[theorem]{Example}
\newcommand*{\R}{\mathbb R}
\newcommand*{\N}{\mathbb N}
\newcommand*{\ad}{\mathrm{ad}}
\newcommand*{\dsr}{\mathrm{ d_{ SR}}}
\newcommand*{\dsrlift}{\widetilde{\mathrm{ d}}_{\mathrm{ SR}}}
\newcommand*{\e}{\mathrm e}
\newcommand*{\chronexp}{\overrightarrow{\exp}}
\newcommand*{\diff}{\mathop{}\!\mathrm{d}}
\newcommand*{\new}[1]{{\color{red}#1}}
\title{
On the Whitney extension property for continuously differentiable horizontal curves in sub-Riemannian manifolds
}
\author{Ludovic Sacchelli\,\footnote{CMAP, \'Ecole Polytechnique, CNRS, Inria, Universit\'e Paris-Saclay, France}\,, Mario Sigalotti\,$^*$\footnote{Inria, Universit\'e Paris-Saclay}
}
\date{\today}
\begin{document}
 \maketitle

 \begin{abstract}
In this article we study the validity of the Whitney $C^1$ extension property for horizontal curves in sub-Riemannian manifolds that satisfy a first-order Taylor expansion compatibility condition. We first consider the equiregular case, where we show that the extension property holds true whenever a suitable non-singularity property holds for the endpoint map on the Carnot groups obtained by nilpotent approximation. We then discuss the case of sub-Riemannian manifolds with singular points and we show that all step-2 manifolds satisfy the $C^1$ extension property. We conclude by showing that the $C^1$ extension property implies a Lusin-like approximation theorem for horizontal curves on sub-Riemannian manifolds.
\end{abstract}
 
 \paragraph*{Keywords.} 
Whitney extension,
nilpotent approximation,
Lusin approximation,
sub-Riemannian geometry.

\section{Introduction}

The success of sub-Riemannian geometry in geometric measure theory and nonlinear control is due to its simplicity and flexibility as a modeling tool and to the richness of the phenomena that it exhibits and allows to study. 
Other branches of mathematics, such as hypoelliptic operator theory and rough path theory use sub-Riemannian geometry as a natural underlying structure \cite{IHP-Vol}.

A sub-Riemannian structure on a manifold $M$ is characterized by a distribution $\Delta\subset TM$ endowed with a point-dependent norm which can be used for measuring the length of \emph{horizontal curves}, i.e., absolutely continuous curves which are tangent to $\Delta$. Horizontal curves play a fundamental role in 
sub-Riemannian geometry, since the \emph{sub-Riemannian distance} 
is defined as the minimal length of an horizontal curve connecting two points.

A natural metric property that it makes sense to test on a sub-Riemannian structure is the extendability of regular horizontal curves. 
The Euclidean counterpart of this property is the well-known Whitney extension theorem for a map $\gamma$ from a closed subset $K$ of $\R$ into $\R^d$, $d\in \N$. In this case, the extendability holds under the sole assumption that  the variation of the jets of $\gamma$ on $K$ is compatible with Taylor's expansions.

We study in this paper the counterpart of the Whitney extension theorem for $C^1$ horizontal ($C^1_H$ in the following) curves in sub-Riemannian manifolds. 
A useful intermediate ground where this problem can be set is provided by Carnot groups. Whitney extension theorems for maps between Carnot groups have been the object of research in the past: in particular the case of real-valued $C^1$ functions on the Heisenberg group 
has been considered 
in \cite{franchi_2001_rectifiability}
and extended to the general case of
real-valued  $C^m$ functions on Carnot groups  
in \cite{Vodopyanov_2006_Whitney_Carnot_groups}.
The problem which we consider here is different, since the domain of the map which we seek to extend is contained in $\R$ and the sub-Riemannian  
structure 
is taken on the codomain. 
The latter formulation of the problem has been proposed for Carnot groups by F.~Serra Cassano in \cite{serracassano:IHPVol1}.

The question of extendability of $C^1_H$ curves 
in Carnot groups has been answered positively in the case of the Heisenberg group by S.~Zimmerman in \cite{ZimmermanWhitneyHeisenberg}. For general Carnot groups, it has been proved in \cite{juillet_sigalotti} that the $C^1_H$ extension property holds if and only if the group is \emph{pliable}, that is, for every horizontal vector $v$ the endpoint map $\mathcal{C}_0\ni u\mapsto E(v+u)$
is locally open at $0$, where $\mathcal{C}_0$ is the space of continuous functions $u$ from $[0,1]$ to the horizontal distribution of the Carnot group such that $u(0)=0$ and $E(w)$ is the endpoint 
at time $1$ of the trajectory starting at the identity and tangent to $w$.
This characterization in terms of pliability, together with some tools from geometric control theory, are used in \cite{juillet_sigalotti} to prove that step-2 Carnot groups satisfy the $C^1_H$ extension property. Several examples of non-pliable Carnot groups are also presented.

A closely related subject is the one of Lusin-like approximations 
in Carnot groups, namely the property that an absolutely continuous horizontal curve coincides, out of a set of arbitrarily small measure, 
 with a $C^1_H$ curve. 
 The validity of such a Lusin approximation theorem in the Heisenberg group has been proved in \cite{speight2016LusinCarnot,ZimmermanWhitneyHeisenberg} and extended to the case of step-2 Carnot groups in \cite{le_donne_speight2016LusinStep2}. In \cite{juillet_sigalotti} it is shown that if a Carnot group is pliable then it satisfies the Lusin approximation property.

In this paper we enlarge the analysis from Carnot groups to general sub-Riemannian manifolds (not necessarily equiregular).  
The first step of this program is to provide a suitable definition of the $C^1_H$ extension property in sub-Riemannian manifolds. This is done by showing that $C^1_H$ curves admit an intrinsic first-order Taylor expansion with uniform reminder, evaluated with respect to the sub-Riemannian distance. This can be seen as a form of uniform Pansu-differentiability for $C^1_H$ curves (see also  the results in \cite{Vodopyanov_2006_Whitney_Carnot_groups}, partially recalled in  Section~\ref{S:Lusin}).
The key for investigating this property is the use of 
nilpotent approximations, which characterize the infinitesimal metric structure at a point of the sub-Riemannian manifold. In the equiregular case, nilpotent approximations have a Carnot group structure and one can take advantage of the metric estimates 
given by the celebrated Ball-Box theorem \cite{bellaiche1996TagentSpace}. 
The non-equiregular case can be tackled by desingularization.

The second step of our analysis consists in 
providing sufficient conditions for the $C^1_H$ extension property in sub-Riemannian manifolds to hold true. In the equiregular case, the conditions are expressed in terms of the pliability properties of the nilpotent approximations. More precisely, we prove that
the $C^1_H$ extension property holds if
every Carnot group corresponding to a nilpotent approximation of the sub-Riemannian manifold is \emph{strongly pliable}, that is, for every horizontal vector $v$, 
not only the endpoint map $\mathcal{C}_0\ni u\mapsto E(v+u)$
is locally open at $0$, but also there exists a sequence of points $u_n$ converging to $0$ in $\mathcal{C}_0$ such that $E(v+u_n)=E(v)$ and  $u\mapsto E(v+u)$ is a submersion at $u_n$ for every $n\in\N$. Strong pliability introduces a guarantee of structural stability in the inversibility of the endpoint map, allowing to deduce extendability properties on the sub-Riemannian manifold from those of the nilpotent approximations. 
As a consequence of this condition, we deduce that step-2 sub-Riemannian manifolds satisfy the $C^1_H$ extension property. More generally, second order conditions for the local openness of the endpoint map can be used to express sufficient conditions for strong pliability in terms of the Goh and the generalized Legendre conditions.

Beyond its own metric interest, the $C^1_H$ extension property can be used to characterize rectifiability in sub-Riemannian manifolds: we show that, if the $C^1_H$ extension property  holds true, then rectifiability by Lipschitz curves is equivalent to rectifiability by $C^1_H$ curves.   This equivalence is based on a generalization of Lusin approximation theorem for sub-Riemannian manifolds that satisfy the $C^1_H$ extension property.

\subsection{Article walkthrough}

In Section~\ref{S:SRG} we recall some basic definitions and properties in sub-Riemannian geometry. In particular, 
we discuss two important tools used in the paper, namely the nilpotent approximation of a sub-Riemannian structure and the distance estimates
 given by the Ball-Box theorem. 
We also recollect some basic facts about the chronological exponential notation 
and we use the variation of constant formula to prove a useful 
distance estimate (Lemma~\ref{L:our_distance_estimate}).

Section~\ref{S:Whitney_cond} is dedicated to the study of the $C^1_H$-Whitney condition. Taking inspiration from the uniform Pansu-differentiability of $C ^1_H$ curves (Proposition~\ref{P:Taylor_exp}),  we propose 
a definition of the $C^1_H$-Whitney condition (Definition~\ref{D:C1H_Whitney_cond}) which is proved to be independent of the choice of frame  on   equiregular manifolds (Proposition~\ref{P:real_Px<->Py}).
In order to extend this result to general sub-Riemannian manifolds, we introduce in Section~\ref{SS:Forward_Backward_WC} forward and backward $C^1_H$-Whitney conditions. These asymmetric conditions turn out to be equivalent to the original $C^1_H$-Whitney condition on equiregular manifolds (Proposition~\ref{P:Wg<=>Wd}). 
Forward and backward $C^1_H$-Whitney conditions can be recast in terms of the nilpotent approximations of the sub-Riemannian structure (Propositions~\ref{P:lim_dilation_forward} and \ref{P:lim_dilation_backward}).
 This proves useful in Section~\ref{SS:singular_WC}, where 
it is proved
that the $C^1_H$ extension property is inherited by the projection of an equiregular lift (Corollary~\ref{C:projection_W_T}).

In Section~\ref{S:Strong_pliability}, we propose a sufficient condition for the $C^1_H$ extension property to hold in terms of strong pliability
(Definition~\ref{D:strong_pliability}). 
Strong pliability 
is always satisfied at regular values of the endpoint map 
and can be investigated through second order conditions at critical points (Section~\ref{SSS:second_order}).
In Section~\ref{SS:Theorem}, we use uniform estimates for the nilpotent approximations 
  to prove that strong pliability 
  implies the $C^1_H$ extension property for equiregular sub-Riemannian manifolds (Theorem~\ref{P:Strongly_pliable=>WT}). As a consequence of this result, together with the previous desingularization analysis, we are able to prove that all step-2 sub-Riemannian manifolds  have the $C^1_H$ extension property (Corollary~\ref{C:step_2}).

As a conclusion, we give in Section~\ref{S:Lusin} an application of the $C^1_H$ extension property, proving that it implies the Lusin approximation of horizontal curves (Proposition~\ref{P:Lusin}), which in turns can be used to characterize $1$-rectifiability (Corollary~\ref{C:rectifiability}).

\subsubsection*{Acknowledgments} 
The authors would like to thank Francesco Boarotto and Fr\'ed\'eric Jean for several fruitful discussions. 

This research  has been supported by the ANR SRGI (reference ANR-15-CE40-0018) 
and 
by a public grant as part of the {\it Investissement d'avenir} project, reference ANR-11-LABX-0056-LMH, LabEx LMH, in a joint call with {\it Programme Gaspard Monge en Optimisation et Recherche Op\'erationnelle}.

\section{Sub-Riemannian Geometry}\label{S:SRG}
In this section we   introduce some classical notions   from the fields of sub-Riemannian geometry and control theory that we use in the following sections. For  more details, we refer to the publications cited below, in particular the books \cite{Sachkov2004Control_theory_geometric_viewpoint}, \cite{jean2014control} and \cite{ABB_nov_2016}.
\subsection{Sub-Riemannian manifolds}

We give the definition of a sub-Riemannian manifold as it can be found in \cite{ABB_nov_2016,MR3010287}.
\begin{definition}\label{D:SRmanifold}
Let $M$ be a smooth connected manifold. A \emph{sub-Riemannian structure} on $M$ is a pair $(U,f)$ where 
\begin{enumerate}[label=\roman*.]
\item $U$ is a Euclidean bundle with base $M$ and Euclidean fiber $ U_q$, {\it i.e.}, for every $q\in M$, $U_q$ is a vector space endowed with a scalar product $(\cdot \mid \cdot)_q$ smooth with respect to $q$. In particular, the dimension of $U_q$ is constant with respect to $q\in M$.

\item $f:U\rightarrow TM$ is a smooth map that is a morphism of vector bundles, {\it i.e.}, $f$ is linear on fibers and the  diagram 
$$
\xymatrix{
U \ar[r]^f  \ar[rd]_{\pi_U}  &  TM   \ar[d]^\pi  \\
 & M
}
$$
is commutative (with $\pi_U:U\rightarrow M$ and $\pi:TM\rightarrow  M$ the canonical projections).
\item The set of \emph{horizontal vector fields} $\Delta=\{ f(\sigma)\mid \sigma :M\rightarrow U \text{ smooth section}\}$ is a Lie bracket-generating family of vector fields. 
\end{enumerate}

A \emph{sub-Riemannian manifold} is then a triple $(M,U,f)$ where $M$ is a smooth manifold endowed with a sub-Riemannian structure $(U,f)$. The \emph{distribution} of this manifold is the family of subspaces 
$$
(\Delta_q)_{q\in M} \text{ where } \Delta_q=f(U_q)\subset T_q M,
$$
and $ \dim \Delta_q$ is called the \emph{rank of the sub-Riemannian structure at $q$}.

With an abuse of notation we will sometimes denote the sub-Riemannian manifold $(M,U,f)$ by $(M,\Delta,g)$, with $g$ a quadratic form on $\Delta$ obtained by projection of the Euclidean structure, as explained in the next definition.
\end{definition}

\begin{example}\label{ex:grushin}
Recall that the Grushin plane \cite{Grushin} is a rank-varying sub-Riemannian structure on $\R^2$ having as moving orthonormal frame
$$
X_1=\partial_x,  \qquad X_2=x \partial_y .
$$ 

In terms of Definition~\ref{D:SRmanifold}, such a sub-Riemannian structure is identified with the triple $(\R^2,U,f)$ where
 $ U \simeq \R^2\times \R^2$ is the standard $2$-dimensional Euclidean fiber over $\R^2$, and
$$
\begin{array}{rccc}
f:& U &\longrightarrow& T\R^2\simeq \R^2\times\R^2
\\
& ((x,y),(u,v))&\longmapsto &((x,y),(u, v x)).
\end{array}
$$
\end{example}

\begin{definition}
An absolutely continuous curve $\gamma:I\rightarrow M$  is said to be \emph{horizontal} if 
there exists $u:I\rightarrow U$ measurable and essentially bounded such that $\gamma=\pi_U(u)$ and 
$\dot{\gamma}(t)=f(\gamma(t),u(t))$ for almost every $t\in I$. 
If, moreover, there exists such a function $u$ which is continuous, then $\gamma$ is said to be a \emph{$C^1_H$ curve}.
For $v\in \Delta_q$, set
$$
g( v,v)=\inf\{  (u\mid u)_q\mid f(q,u)=(q,v), (q,u)\in U\}.
$$
We then define the \emph{length of the horizontal curve $\gamma$} as
$$
l(\gamma)=\int_I g(\dot{\gamma}(t),\dot{\gamma}(t))^{1/2} \diff t.
$$
With this length we are able to define the \emph{Carnot-Caratheodory distance between two points $p,q\in M$} as
$$
\dsr (p,q)
=
\inf
\left\{
l(\gamma)
\mid
\gamma:(a,b)\rightarrow M \text{ horizontal},\gamma(a)=p,\gamma(b)=q
\right\}.
$$
\end{definition}

\begin{definition}\label{D:frame}
Let $(M,U,f)$ be a sub-Riemannian manifold and $\Omega$ be an open subset of $M$. We call \emph{frame of the distribution on $\Omega$} a family of horizontal vector fields $(X_1,\dots, X_m)$ such that there exists  a smooth Euclidean frame $(e_1,\dots ,e_m)$ of the Euclidean bundle  on $\pi_U^{-1}(\Omega)$  that satisfies
$$
X_j=f_* e_j, \quad 1\leq j\leq m.
$$
\end{definition}
As a direct consequence of this definition, we have that for any two frames $(X_1,\dots ,X_m)$ and $(Y_1,\dots ,Y_m)$ on $\Omega$, there exists a smooth map $c$ from $\Omega$  to the orthogonal group $\mathrm{O}(m)$
such that 
$$
X_i(q)=\sum_{j=1}^{m} c_{ij}(q) Y_j(q), \quad 1\leq i\leq m, \ q\in \Omega.
$$

\begin{definition}\label{D:lift}
Let $(\widetilde{M},\widetilde{U},\widetilde{f})$ and $(M,U,f)$ be two sub-Riemannian manifolds. We say that $(\widetilde{M},\widetilde{U},\widetilde{f})$ is a \emph{lift} of $(M,U,f)$ if there exists 
$
\phi:\widetilde{U}\rightarrow U
$
a fiberwise isometry
and $\psi:\widetilde{M}\rightarrow M$ a submersion such that the  diagram
$$
\xymatrix{
\widetilde{U} \ar[r]^{\widetilde{f}}  \ar[d]_{\phi}  &  T\widetilde{M}   \ar[d]^{\psi_*}  \\
U\ar[r]^f & TM
}
$$
 is commutative.
\end{definition}

A consequence of the isometry condition   is that for $\dsrlift$ and $\dsr$ the respective sub-Riemannian distances of $(\widetilde{M},\widetilde{U},\widetilde{f})$ and $(M,U,f)$, we have 
\begin{equation}\label{E:proj_lift_dist}
\dsr(\psi(p),\psi(q))\leq \dsrlift(p,q), \qquad \forall p,q\in \widetilde{M}.
\end{equation}
Moreover, any frame $(X_1,\dots, X_m)$ of $(M,U,f)$ admits a lift $(\widetilde{X}_1,\dots ,\widetilde{X}_m)$ ({\it i.e.},  $\psi_* \widetilde{X}_i=X_i$ for all $1\leq i\leq m$) that is a frame of  $(\widetilde{M},\widetilde{U},\widetilde{f})$.

\begin{example}
Consider the standard Heisenberg group structure on $\R^3$, endowed with canonical coordinates $(x,y,z)$ and frame $(X_1,X_2)$ such that
$$
X_1=\partial_x - \frac{y}{2} \partial_z\qquad X_2=\partial_y + \frac{x}{2} \partial_z.
$$ 
Its expression in terms of sub-Riemannian manifold is the following. We set $\widetilde{U}\simeq \R^3\times \R^2$ to be the standard $2$-dimensional Euclidean fiber over $\R^3$, and $T \R^3 \simeq \R^3 \times \R^3$.
By setting
$$
\begin{array}{rccc}
\widetilde{f}:&\widetilde{U}&\longrightarrow& T\R^3
\\
& ((x,y,z),(u,v))&\longmapsto &((x,y,z),(u,v,(v x- uy)/2))
\end{array}
$$
we have that the diagram
$$
\xymatrix{
 \R^3\times \R^2 \ar[r]^{\widetilde{f}}  \ar[rd]_{\widetilde{\pi}_U}  &  T\R^3  \ar[d]^{\widetilde{\pi} } \\
 & \R^3
}
$$
is commutative.
The submersion
$$
\begin{array}{rccc}
\psi:&\R^3&\longrightarrow& \R^2
\\
& (x,y,z)&\longmapsto &(x,z+xy/2),
\end{array}
$$
induces the differential map
$$
\begin{array}{rccc}
\psi_*:&T\R^3&\longrightarrow& T\R^2
\\
& ((x,y,z),(u,v,w))&\longmapsto &((x,z+xy/2),(u,uy/2+vx/2+w)).
\end{array}
$$
As a consequence,
$$
\psi_*\circ \widetilde{f}((x,y,z),(u,v))= ((x,z+xy/2),(u,vx)).
$$
Denoting by $\phi$ the fiberwise isometry
$$
\begin{array}{rccc}
\phi:&\widetilde{U}&\longrightarrow& U\simeq \R^2\times \R^2
\\
& ((x,y,z),(u,v))&\longmapsto &((x,z+xy/2),(u,v))
\end{array}
$$
and by $f$ the smooth bundle morphism of the Grushin plane
$$
\begin{array}{rccc}
f:& U &\longrightarrow& T\R^2
\\
& ((x,y),(u,v))&\longmapsto &((x,y),(u, v x))
\end{array}
$$
(see Example~\ref{ex:grushin}), one easily checks that $\psi_*\circ \widetilde{f}=f\circ \phi$. Hence the Heisenberg group $(\R^3,\widetilde{U},\widetilde{f})$ is a lift of the Grushin plane $(\R^2,U,f)$.
\end{example}

\subsection{Nilpotent approximation}\label{SS:Nilp_approx}

Set $\Delta^1=\Delta$ and $\Delta^{k+1}=\Delta^k+[\Delta^k,\Delta]$ for every integer $k\geq 1$.
At any point $p\in M$, 
the Lie bracket generating condition ensures that there exists an integer $r\geq 1$, that we call \emph{step of the sub-Riemannian structure at $p$}, such that 
\begin{equation}\label{E:flag}
\Delta^1_p\subseteq\Delta^2_p\subseteq\dots \Delta^{r-1}_p\subsetneq\Delta^r_p=T_pM.
\end{equation}
The finite sequence of integers  $(\dim \Delta^1_p, \dots, \dim \Delta^r_p)=(n_1,\dots,n_r)$ is called \emph{growth vector at $p$}. If  the growth vector is constant on a neighborhood of $p$, $p$ is said to be \emph{regular}, and \emph{singular} otherwise.
The manifold itself  is said to be \emph{equiregular} if each of its points is regular and we say that it is singular if it contains singular points.

We call \emph{desingularization of $(M,\Delta,g)$} a lift 
 $(\widetilde{M},\widetilde{\Delta},\widetilde{g})$ of $(M,\Delta,g)$  that is equiregular.
  As shown in \cite[Lemma 2.5]{jean2014control}, at any given point $p\in M$,  there exists a desingularization on some open neighborhood of $p$ that has the same step as $\Delta$ at $p$.

The relation between the flag \eqref{E:flag} and the distance $\dsr$ is characterized by the so-called \emph{weights at $p$}, that is the sequence of integers $w=(w_1,\dots ,w_d)$ such that $w_j=s$ if $n_s<j \leq n_{s+1}$ (with $n_0=0$). Written in full, that is
$$
w=(\underbrace{1,\dots ,1}_{n_1\text{ times}},\underbrace{2,\dots ,2}_{(n_2-n_1)\text{ times}},\dots ,\underbrace{r,\dots ,r}_{(n_{r}-n_{r-1})\text{ times}}).
$$

 A (smooth) system of coordinates $(x_1,\dots, x_d):\Omega\rightarrow \R^d$ is said to be a \emph{system of privileged coordinates at $p$}  if $\Omega$ is a neighborhood of $p$ and
$$
\sup\left\{s\in \R \mid x_j(q)=O(\dsr(p,q)^s)\right\}=w_j,\quad  \,1\leq  j\leq d.
$$
This definition implies that privileged coordinates belong to the class of linearly adapted coordinates at $p$, {\it i.e.},  coordinates $(x_1,\dots, x_d)$ that satisfy
$$
\diff x_i(\Delta^{w_i}_p)\neq 0, \diff x_i(\Delta^{w_i-1}_p)=0, \quad  \,1\leq  i\leq d, 
$$
with $\Delta^0_p=\{0\}$.
Existence of privileged coordinates has been proved in \cite{Agrachev_Sarychev_filtrations,bianchini_stefani_graded,bellaiche1996TagentSpace}.

A \emph{continuously varying system of privileged coordinates} on an open $\Omega\subset M$ is a continuous map
$$
\Phi:
 (p,q) \longmapsto  \Phi_p(q)\in  \R^d
$$ 
defined on a neighborhood of the set $\{(p,p)\mid p\in \Omega\}$ in $M\times M$
such that for each $p\in \Omega$, the mapping $\Phi_p$ is a system of privileged coordinates at $p$.

The system of coordinates $\Phi$ can be used to define a pseudo-norm and a dilation, as follows:
the \emph{pseudo-norm at $p\in \Omega$} is the map  $\| \cdot \|_p:\R^d\rightarrow \R$ defined by
\begin{equation}\label{E:def_pseudo_norm}
\| (x_1,\dots,x_d) \|_p=\sum_{i=1}^d |x_i|^{1/w_i},
\end{equation}
with $(w_i)_{1\leq i\leq d}$ the weights at $p$.
Let $\lambda>0$ and $\mathfrak{d}_\lambda:\R^d\rightarrow\R^d$ be defined by 
$$
\left(
\mathfrak{d}_\lambda
\left(
y
\right)
\right)_i
=\lambda^{w_i} y_i,
\quad 
1\leq i\leq d.
$$
For $p\in\Omega$, let $\delta_\lambda^p$ be the \emph{quasi-homo\-geneous dilation centered at $p$}, 
$$
\begin{array}{cccc}
\delta_\lambda^p:& \Omega 
&\longrightarrow & \R^d
\\
&
q
&
\longmapsto
&
 \mathfrak{d}_\lambda \circ \Phi_p (q).
\end{array}
$$
Then by construction,
$$
\| \delta_\lambda^p (q) \|_p= \lambda \| \Phi_p (q) \|_p.
$$

For every  horizontal vector field  $X$  and every $p\in M$, we call \emph{nilpotent approximation of $X$ at $p$} the uniform limit $\widehat{X}$ on compact sets of $\R^d$ of the vector field $\lambda {\delta^p_\lambda}_*X $ as $\lambda\rightarrow 0^+$
  (see for instance \cite[Proposition 10.48]{ABB_nov_2016}). Given a frame $(X_1,\dots, X_m)$ of the distribution, the vector bundle $(\widehat{X}_1,\dots, \widehat{X}_m)$ endows  $\R^d $  with a structure of homogeneous space that depends on the point $p$ but neither on the frame nor the system of privileged coordinates. 
  This object is referred to as  \emph{nilpotent approximation of $(M,\Delta ,g)$ at $p$}, and is denoted by $(\R^d,(\widehat{X}_1,\dots,\widehat{X}_m))$ when referring to a specific choice of frame.
In the equiregular case, the nilpotent approximation is not only a homogeneous space but actually has a Carnot group structure 
(\cite{bellaiche1996TagentSpace}).

\subsection{Uniform distance estimates}\label{SS:Distance_Estimates}
Privileged coordinates and nilpotent approximations play a fundamental role in distance estimates which compare the pseudo norm~\eqref{E:def_pseudo_norm} with the sub-Riemannian distance.
The result below will be applied repeatedly  in the rest of the paper.
\begin{theorem}[{\cite[Theorem~2.3]{jean2014control}}]\label{T:equiregular}
Let $\bar{p}\in M$ be a regular point. There exist an open neighborhood $\Omega$ of $\bar{p}$, a continuously varying system of privileged coordinates $\Phi$ on $\Omega$, 
and two positive constants $\varepsilon $, $C $, such that 
for every pair $(p,q)\in \Omega\times \Omega$ with $\dsr(p,q)\leq \varepsilon$,  
$$
\frac{1}{C} \| \Phi_p (q) \|_p\leq \dsr(p,q) \leq C\| \Phi_p(q)\|_p.
$$
\end{theorem}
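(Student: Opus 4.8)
This is the uniform form of the Ball--Box theorem, so the plan is to run the classical pointwise comparison while tracking the fact that, near a \emph{regular} point, all the objects and constants involved can be chosen to depend continuously on the base point $p$; the uniform $\varepsilon,C$ then drop out of a compactness/continuity argument. \emph{Step 1 (continuously varying privileged coordinates).} Since $\bar p$ is regular, the growth vector $(n_1,\dots,n_r)$ is constant near $\bar p$. Fix a frame $(X_1,\dots,X_m)$ of the distribution there and, inductively on $s$, adjoin iterated brackets $X_{n_{s-1}+1},\dots,X_{n_s}$ of length $s$ of the $X_i$'s so that $(X_1(q),\dots,X_d(q))$ is, for every $q$ near $\bar p$, a basis of $T_qM$ adapted to the flag $\Delta^1_q\subseteq\cdots\subseteq\Delta^r_q$; this is possible with a smooth dependence on $q$ precisely because the dimensions $n_s$ are locally constant. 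From such an adapted frame form the coordinates of the second kind $\Phi_p^{-1}(y)=\e^{y_dX_d}\circ\cdots\circ\e^{y_1X_1}(p)$, which are linearly adapted at $p$, and apply the standard polynomial correction \cite{bellaiche1996TagentSpace} to make them privileged at $p$. The whole construction is smooth in $(p,q)$, so it yields a continuously varying system of privileged coordinates $\Phi$ on a neighborhood $\Omega$ of $\bar p$; in particular the coefficients of $X_i$ read in the coordinates $\Phi_p$, their weighted-homogeneous leading parts (the components of $\widehat X_i$), and the corresponding remainders depend continuously on $(p,\cdot)$ and are therefore locally uniformly bounded in $p$.

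\emph{Step 2 (upper bound $\dsr(p,q)\le C\|\Phi_p(q)\|_p$).} Reading everything in the coordinates $y=\Phi_p(q)$, I would reach $q$ from $p$ by a ``box map'' built directionwise, by increasing weight, from concatenated flows and iterated commutator words of the $X_i$'s: a weight-one coordinate is adjusted by flowing along $X_1,\dots,X_m$ for time $O(|y_j|)$, and a weight-$w$ coordinate $y_j$ by a nested commutator of flows, each run for time $O(|y_j|^{1/w})$, which by the very definition of privileged coordinates and the homogeneity of the nilpotent approximation produces a displacement $y_j$ along $\partial_{x_j}$ up to a perturbation of the lower-weight coordinates reabsorbed in finitely many further steps. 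A continuity/fixed-point argument corrects for the difference between $X_i$ and $\widehat X_i$ near $p$ and shows this map covers a full quasi-ball $\{\|\cdot\|_p\le c\rho\}$; the curve produced has length $O\big(\sum_j|y_j|^{1/w_j}\big)=O(\|\Phi_p(q)\|_p)$, with a constant continuous in $p$ by Step 1, hence uniform on a smaller neighborhood.

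\emph{Step 3 (lower bound $\frac1C\|\Phi_p(q)\|_p\le\dsr(p,q)$ --- the main obstacle).} Let $\gamma\colon[0,\ell]\to\Omega$ be horizontal, $\gamma(0)=p$, parametrized by arc length with $\ell=l(\gamma)$ small, driven by the minimal control, so $\sum_i u_i(t)^2=1$ a.e.; put $x(t)=\Phi_p(\gamma(t))$, so $\dot x=\sum_i u_i\,X_i^{(p)}(x)$ with $X_i^{(p)}$ the coordinate expression of $X_i$. The structural input is that $(X_i^{(p)})_j$ has weighted order $\ge w_j-1$, whence $|(X_i^{(p)})_j(y)|\le C_0\,\rho^{\,w_j-1}$ on a box $B(\rho)=\{|y_j|\le\rho^{w_j}\ \forall j\}$ with $\rho$ below a uniform radius $\rho_0$ and $C_0$ uniform in $p\in\Omega$ (Step 1). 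The difficulty is that this estimate is only in force \emph{while $\gamma$ stays in a box}, so the argument must be a first-exit-time bootstrap: assume $\ell$ small enough that $\rho:=2\sqrt m\,C_0\,\ell<\rho_0$ and let $t^*$ be the first time $x(t)\notin B(\rho)$ (set $t^*=\ell$ otherwise). For $t\le t^*$, $|x_j(t)|\le\int_0^t\sum_i|u_i(s)|\,|(X_i^{(p)})_j(x(s))|\,\diff s\le\sqrt m\,C_0\,\rho^{\,w_j-1}\ell=\frac12\rho^{w_j}<\rho^{w_j}$, \emph{strictly} in every coordinate, so $x(t)$ cannot reach $\partial B(\rho)$; hence $t^*=\ell$ and $|x_j(\ell)|\le\rho^{w_j}$ for all $j$, i.e.\ $\|\Phi_p(q)\|_p=\sum_j|x_j(\ell)|^{1/w_j}\le d\rho=2d\sqrt m\,C_0\,\ell$. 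Taking the infimum over horizontal curves from $p$ to $q$ gives the lower bound, again with a constant continuous in $p$.

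Finally, shrinking $\Omega$ and choosing $\varepsilon>0$ small enough that the box-validity radius $\rho_0$, the smallness condition on $\ell$ in Step 3, and the covering radius of Step 2 are simultaneously good for every $p\in\Omega$, one extracts the uniform constants $\varepsilon,C$ of the statement. The genuinely non-routine ingredients are Step 1 --- producing privileged coordinates that vary continuously with $p$, which is what turns every pointwise constant into a continuous, hence locally uniform, function of $p$, and which crucially uses that $\bar p$ is regular --- and the first-exit-time bootstrap of Step 3, where one must preclude $x(t)$ leaving the region in which the homogeneity estimates hold before the bound $|x_j(t)|\le\rho^{w_j}$ is secured; this is handled by arranging the computed bound to be a \emph{strict} inequality, so that the exit never occurs.
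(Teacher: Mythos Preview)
The paper does not supply its own proof of this statement: Theorem~\ref{T:equiregular} is simply quoted from \cite[Theorem~2.3]{jean2014control} as a black box and used throughout Sections~\ref{SS:Distance_Estimates}--\ref{S:Strong_pliability}. Your sketch is essentially the standard argument behind that reference --- build continuously varying privileged coordinates from a smoothly varying adapted frame (this is exactly where regularity of $\bar p$ enters), then prove the two Ball--Box inequalities with constants that, by Step~1, depend continuously on $p$ and are hence locally uniform --- and it is correct at the level of a proof outline; the first-exit-time bootstrap in Step~3 is the usual way to secure the lower bound, and your arithmetic closes (with $\rho=2\sqrt m\,C_0\,\ell$ one gets $|x_j(t)|\le\tfrac12\rho^{w_j}<\rho^{w_j}$, so no exit can occur).
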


An application of this theorem is the following technical lemma (Lemma~\ref{L:our_distance_estimate}) that will be useful in later results.

In order to prove the lemma, we  introduce a useful notation for the flow of time-dependent vector fields, the so-called \emph{chronological exponential} \cite{agrachevGamkre_chron,Sachkov2004Control_theory_geometric_viewpoint}.
Let $\R\times M\ni (t,q)\mapsto X_t(q)$ be a complete time-dependent vector field, measurable and locally bounded with respect to $t$ and smooth with respect to $q$.
For $a,b\in \R$, $a\leq b$, we denote by 
$$
\chronexp
\int_a^b X_t \diff t : M\longrightarrow M
$$
the map from $M$ onto itself such that 
the curve $\gamma:[a,b]\rightarrow M $ defined by 
$
\gamma(t)=
\chronexp
\int_a^t X_\tau \diff \tau
(q_0)
$
is absolutely continuous, satisfies $\gamma(a)=q_0$ and $\dot{\gamma}(t)=X_t(\gamma(t))$ for almost every $t$.
For $a\geq b$ we set 
$
\chronexp
\int_a^b X_\tau \diff \tau
=
\left(\chronexp
\int_b^a X_{\tau} \diff \tau\right)^{-1}
$.
Let us recall the variation of constant formula. (For a reference, see Equation (2.28) in \cite{Sachkov2004Control_theory_geometric_viewpoint}; notice that here we use the standard notational rule for the composition of maps, which explains the difference between the two expressions.) If $X_\tau,Y_\tau$ are two time-dependent vector fields 
then
\begin{equation}\label{E:var_of_constants}
\chronexp \int_0^t (X_\tau +Y_\tau)\diff \tau 
=
\chronexp \int_0^t
\left(
\chronexp  \int_{\tau }^{t}
X_\sigma
\diff \sigma 
\right)_{*}
Y_\tau
\diff \tau 
\circ 
\chronexp
\int_0^t X_\tau \diff \tau,
\end{equation}
where $P_*X$ is used to denote the pushforward of the vector field $X$ along the diffeomorphism $P$.
(In order to justify the writing in \eqref{E:var_of_constants}, all the vector fields should be complete. In the following, variations formula are used for local reasonnings around a point, so that completeness can be guaranteed by multiplying all vector fields by a suitable cut-off function.)
 In particular if $X$ is a time-independent vector field, Equation~\eqref{E:var_of_constants} takes the form
\begin{equation}\label{E:var_of_c_simplified}
\chronexp \int_0^t (X +Y_\tau)\diff \tau 
=
\chronexp \int_0^t 
\e^{(\tau-t)\ad X}
Y_\tau
\diff \tau 
\circ 
\e^{t X},
\end{equation}
where for each smooth vector field $V$ on $M$ we denote by $\ad V$ the endomorphism of the space of smooth vector fields on $M$ defined by
$$
\ad V(W)=[V,W].
$$
Then $\e^{\sigma\ad X} Y_\tau$ admits the series expansion
\begin{equation}\label{E:exp_ad}
\e^{ \sigma\ad X}
Y_\tau
=
Y_\tau+\sum_{k=1}^{N-1} \frac{\sigma^k}{k!} \left[ X, \dotsc ,\left[X,Y_\tau\right]\dotsi \right]+R_N(\tau,\sigma),
\quad 
N\in \N,
\end{equation}
and there exists $C>0$ such that for all compact $K$ contained in a given coordinate neighborhood of $M$, all integer $j\geq 0$,
\begin{equation}\label{E:bound_remainder}
\left\| R_N(\tau,\sigma) \right\|_{j,K}
\leq 
\frac{C}{N!}
\e^{C \sigma \left\| X \right\|_{j+1,K} } 
\sigma^N   \left\| X  \right\|_{j+N,K}^N
\left\| Y_\tau \right\|_{j+N,K},
\end{equation}
where $\|\cdot \|_{j,K}$ denotes the semi-norm on the space of smooth vector fields
$$
\|f\|_{j,K}
=
\sup\left\{
\left|
\frac{\partial^{\alpha}f}{\partial x^\alpha}(x)
\right|
\mid x\in K,\alpha\in \N^d, |\alpha|\leq j
\right\}.
$$
(See \cite[Equation 2.24]{Sachkov2004Control_theory_geometric_viewpoint}.)
As a consequence, if $\tau\mapsto \left\| Y_\tau \right\|_{j+N,K}$ is bounded in $L^\infty([0,\tau])$, then for $\sigma$ near $0$,
$$
\left\| R_N(\tau,\sigma) \right\|_{j,K}=O(\sigma ^N).
$$

In the following, for a given frame $(X_1,\dots, X_m)$ and a given $u\in \R^m$, we denote by $X_u$ the horizontal vector field $\sum_{i=1}^m u_iX_i$.

\begin{lemma}\label{L:our_distance_estimate}
Let $(M,\Delta,g)$ be an equiregular sub-Riemannian manifold of dimension $d$, rank $m$ and step $r$.
Let $(X_1,\dotsc,X_m)$  be a frame of $\Delta$ defined on an open subset $\Omega$ of $ M$.
Pick $\bar{p}\in \Omega$, $\bar{u}\in \R^m$, and $\phi:[0,+\infty)\rightarrow[0,+\infty)$, continuous at $0$ and such that $\phi(0)=0$.

Then there exist $T>0$,  $V_{\bar{p}}\subset \Omega$, $V_{\bar{u}}\subset \R^m$  neighborhoods of $\bar{p}$, $\bar{u}$ respectively, a function $\omega:\R^+\rightarrow\R^+$ with $\omega(t)=o(t)$ at $0^+$, such that if $t\in [0,T]$, $p\in V_{\bar{p}}$, $u,v:[0,T]\rightarrow V_{\bar{u}}$ continuous at $0$ and
$$
|u-u(0)|\leq \phi , \quad |v-v(0)|\leq \phi ,\quad u(0)=v(0),
$$
then 
$$
\dsr
\left(
	\chronexp\int_0^t   X_{u(s)} \diff s (p)
	,
 	\chronexp\int_0^t   X_{v(s)} \diff s (p)
\right)
\leq
\omega(t).
$$
\end{lemma}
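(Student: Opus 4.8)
The goal is to estimate the sub-Riemannian distance between the endpoints of two trajectories, starting from the same point $p$, whose controls $u,v$ agree at time $0$ and stay within $\phi$ of their (common) initial value. The natural strategy is to use the variation of constants formula to "factor out" the common leading behaviour and isolate a small perturbation, then push the estimate through Theorem~\ref{T:equiregular}. First I would set $u_0 := u(0) = v(0)$ and write both flows relative to the constant control $u_0$: applying \eqref{E:var_of_c_simplified} with $X = X_{u_0}$ and $Y_\tau = X_{u(\tau)} - X_{u_0} = X_{u(\tau)-u_0}$, we get
$$
\chronexp\int_0^t X_{u(s)}\diff s = \chronexp\int_0^t \e^{(\tau-t)\ad X_{u_0}} X_{u(\tau)-u_0}\diff\tau \circ \e^{t X_{u_0}},
$$
and the analogous identity for $v$. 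Since $\e^{tX_{u_0}}$ is a common diffeomorphism applied last on the right to the same base point $p$, it suffices to bound the distance between the two "perturbation flows" $\chronexp\int_0^t \e^{(\tau-t)\ad X_{u_0}} X_{u(\tau)-u_0}\diff\tau$ and its $v$-counterpart, both evaluated at the point $q := \e^{tX_{u_0}}(p)$, which lies in a small neighbourhood of $\bar p$ for $t$ small.

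Next I would estimate the length of each perturbation trajectory directly. By \eqref{E:exp_ad}–\eqref{E:bound_remainder}, the time-dependent vector field $W_\tau := \e^{(\tau-t)\ad X_{u_0}} X_{u(\tau)-u_0}$ has sup-norm (on a fixed compact coordinate neighbourhood) bounded by $C'\,|u(\tau)-u_0| \le C'\,\phi(\tau) \le C'\,\phi(t)$ for $\tau\in[0,t]$, uniformly for $u_0$ near $\bar u$; the key point is that $W_\tau$ vanishes (to first order) wherever $u(\tau)=u_0$, so its size is controlled by $\phi$. The trajectory $s\mapsto \chronexp\int_0^s W_\tau\diff\tau(q)$ is horizontal for the sub-Riemannian structure of $(M,\Delta,g)$ — indeed $W_\tau$ is a combination of the frame vector fields $X_i$ with coefficients bounded by $C''\phi(t)$, because $\e^{(\tau-t)\ad X_{u_0}} X_j$ need not be horizontal; here I must be careful. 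To fix this, rather than estimating length I would estimate the displacement in privileged coordinates: both perturbation trajectories stay within sub-Riemannian distance $O(t\,\phi(t)^{?})$... — more robustly, I would invoke Lemma-type reasoning using Theorem~\ref{T:equiregular}. Concretely: the endpoints $q_u(t) := \chronexp\int_0^t W^u_\tau\diff\tau(q)$ and $q_v(t)$ both lie at sub-Riemannian distance $\le \int_0^t g(\dot\gamma,\dot\gamma)^{1/2}\le C''' t\,\phi(t)^{?}$ from $q$ — but $W_\tau$ is honestly horizontal only after conjugation. The clean route is: $q_u(t) = \chronexp\int_0^t X_{u(s)-u_0}\diff s$ applied to... no. Let me instead keep the flows un-factored and argue via the triangle inequality and Theorem~\ref{T:equiregular}:
$$
\dsr\bigl(\gamma_u(t),\gamma_v(t)\bigr) \le C\,\|\Phi_{\gamma_u(t)}(\gamma_v(t))\|_{\gamma_u(t)},
$$
so it suffices to show the privileged coordinates of $\gamma_v(t)$ with respect to those of $\gamma_u(t)$ are $o(t^{w_i})$ componentwise, where $w_i$ are the weights. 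For this I would expand both $\gamma_u(t)$ and $\gamma_v(t)$ in a fixed privileged coordinate chart around $\bar p$ via the chronological calculus \eqref{E:exp_ad}, subtract, and observe that the difference is governed by $\int_0^t |u(s)-v(s)|\,\mathrm{ds} \le \int_0^t 2\phi(s)\,\mathrm{ds}$ plus higher-order bracket terms, all of which vanish faster than $t^{w_i}$ because each bracket of weight $w_i$ carries $w_i$ integrations and at least one factor of $|u-u_0|\le\phi(t)\to 0$.

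The cleanest organization, and the one I would ultimately write, is: (i) reduce to $p$ fixed in a small neighbourhood by the factorization above, absorbing $\e^{tX_{u_0}}$; (ii) connect $\gamma_u(t)$ to $\gamma_v(t)$ by the concatenation "run $\gamma_u$ backward to $q$, then run $\gamma_v$ forward"; this concatenated horizontal curve has length $l \le \int_0^t g(\dot\gamma_u,\dot\gamma_u)^{1/2}\mathrm{ds} + \int_0^t g(\dot\gamma_v,\dot\gamma_v)^{1/2}\mathrm{ds}$, and since $\dot\gamma_u(s) = X_{u(s)}(\gamma_u(s))$ with $u$ bounded, this only gives $l = O(t)$, not $o(t)$ — insufficient. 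So the gain to $o(t)$ genuinely requires exploiting cancellation between $u$ and $v$, not just boundedness. Hence step (ii) must instead be the coordinate estimate: I will show $\|\Phi_{\bar p}(\gamma_u(t)) - \Phi_{\bar p}(\gamma_v(t))\|$ componentwise is $o(t^{w_i})$ by the bracket expansion, using that $u(s)-u_0$ and $v(s)-u_0$ are both $O(\phi(t))$ and that privileged coordinates of weight $w_i$ see only brackets of length $\ge w_i$; then feed this into Theorem~\ref{T:equiregular} to pass from coordinates back to $\dsr$. The main obstacle is precisely this: $O(t)$-size length estimates are too crude, so one must carry out the chronological bracket expansion carefully enough to see that the leading $t^{w_i}$ terms of $\gamma_u$ and $\gamma_v$ coincide (they depend only on $u_0=v_0$) while the discrepancy is an explicit integral of $u-v$ weighted by brackets, hence $o(t^{w_i})$; controlling the remainders $R_N$ uniformly in $p\in V_{\bar p}$ and $u_0\in V_{\bar u}$ via \eqref{E:bound_remainder} is the technical heart. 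Setting $\omega(t) := C\sum_i \bigl(\text{that discrepancy}\bigr)^{1/w_i}$ gives $\omega(t)=o(t)$ as required, uniformly over the admissible $p,u,v$.
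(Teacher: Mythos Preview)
Your proposal correctly identifies the essential ingredients --- variation of constants, bracket expansion, privileged-coordinate estimates, and Theorem~\ref{T:equiregular} --- and you rightly diagnose why the na\"ive approaches fail ($W_\tau$ is not horizontal; concatenation only gives $O(t)$). But the plan you finally settle on has a real gap, and a key mechanism from the paper's proof is missing.

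\textbf{The centering issue.} You propose to show that $\Phi_{\bar p}(\gamma_u(t)) - \Phi_{\bar p}(\gamma_v(t))$ is $o(t^{w_i})$ componentwise in a \emph{fixed} chart at $\bar p$. But Theorem~\ref{T:equiregular} compares $\dsr(p,q)$ with $\|\Phi_p(q)\|_p$, i.e., with coordinates centered at one of the two points. Both $\gamma_u(t)$ and $\gamma_v(t)$ sit at pseudo-distance $\sim t$ from $\bar p$; a componentwise difference of size $o(t^{w_i})$ in $\Phi_{\bar p}$ does \emph{not} directly bound $\dsr(\gamma_u(t),\gamma_v(t))$. The paper avoids this by first reducing (via the triangle inequality) to the case where one control, say $u$, is the constant $u(0)$, and then centering the privileged coordinates at the perturbed endpoint $\xi(t)=\chronexp\int_0^t X_{v(s)}\diff s(p)$, so that one only needs $\|\Phi_{\xi(t)}(\gamma(t))\|_{\xi(t)}=o(t)$.

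\textbf{The missing mechanism.} Your claim that ``leading $t^{w_i}$ terms coincide and the discrepancy is $o(t^{w_i})$'' is the right intuition, but you have not said how to make it rigorous. The paper's device is as follows. With $X=X_{u(0)}$, $Z_s=X_{v(s)-u(0)}$, and $W^t_s=\e^{(s-t)\ad X}Z_s$, one writes $\gamma(t)=\chronexp\int_t^0 W^t_s\diff s(\xi(t))$ and introduces the auxiliary path $\eta_t(\tau)=\chronexp\int_0^\tau W^t_s\diff s(\gamma(t))$. A short computation shows $\eta_t(\tau)=\e^{(t-\tau)X}(\xi(\tau))$, whence the \emph{a priori} bound $\dsr(\xi(t),\eta_t(\tau))\le C(t-\tau)$. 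Now expand $W^t_s=\sum_{k=1}^{r-1}\frac{(s-t)^{k-1}}{(k-1)!}V^k_s+R_r$ with $V^k_s=(\ad X)^{k-1}Z_s\in\Delta^k$. The crucial point is a non-holonomic order argument: since $V^k_s\in\Delta^k$ and $|Z_s|\lesssim\phi(s)$, in the coordinates $\Phi_{\xi(t)}$ one has
\[
\bigl|(V^k_s)_i(\eta_t(s))\bigr|\;\lesssim\;\phi(s)\,\dsr(\xi(t),\eta_t(s))^{\max(w_i-k,0)}\;\lesssim\;\phi(s)\,t^{\max(w_i-k,0)}.
\]
Integrating, each term contributes $t^{w_i}$ times a factor that tends to $0$ (an average of $\phi$), and the remainder $R_r$ is handled by~\eqref{E:bound_remainder}. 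This combination --- non-holonomic order of $V^k_s$ \emph{evaluated along a path whose distance to the center is already controlled} --- is what converts the cancellation $u(0)=v(0)$ into the required $o(t^{w_i})$. Your outline gestures at this (``each bracket of weight $w_i$ carries $w_i$ integrations and at least one factor of $\phi$'') but does not supply the argument; in particular the auxiliary curve $\eta_t$ and its distance estimate are absent.
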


\begin{proof}
Without loss of generality we can assume $  u$ to be constant and  the general result follows by triangular inequality.

We apply Theorem~\ref{T:equiregular} to endow a  compact neighborhood $\Omega'\subset\Omega$ of $\bar{p}$ with a continuously varying system of privileged coordinates $\Phi$.

We fix $T>0$, $V_{\bar{p}}\subset \Omega'$ and  $V_{\bar{u}}\subset \R^m$  neighborhoods of $\bar{p}$ and $\bar{u}$, respectively, such that $\chronexp\int_0^t   X_{v(s)} \diff s (p)$ is  in $\Omega'$ for every $t\in [0,T]$, $p\in V_{\bar{p}}$, $v:[0,T]\rightarrow V_{\bar{u}}$  continuous at $0$.

Let $p\in V_{\bar{p}}$, $u \in V_{\bar{u}}$ and $v:[0,T]\rightarrow  V_{\bar{u}}$, continuous at $0$, be  such that 
$
|u-v|\leq \phi.
$
For all $t\in[0,T]$ let
$$
\gamma(t)=\e^{t X_{u } } (p) \quad\text{ and }\quad\xi(t)=\chronexp\int_0^t   X_{v(s)} \diff s (p).
$$

{\it Step 1:} rewriting $\xi$ as a perturbation of $\gamma$.
\\
Let us set $X =X_{u }$ and $Z_t=X_{v(t)}-X_u$, 
so that 
$$
\dot{\xi}(t)=X (\xi(t))+Z_t(\xi(t)).
$$
By the variation  of constants formula,
$$
\xi(t)
=
\chronexp \int_0^t(X +Z_s)\diff s(p)
=\chronexp \int_0^t 
\e^{(s-t)\ad X}
Z_s
\diff s
\left(
\e^{t X}(p)\right).
$$
Let $W^{\new{t}}_s=\e^{(s-t)\ad X}Z_s$ and denote its integral curve by
$$
\eta_t(\tau)=\chronexp \int_0^\tau W^{t}_s\diff s (\gamma(t)),\qquad \forall \tau\in (0,t).
$$
 Hence the problem consists in proving that the distance
$
\dsr
\left(
\xi(t)
,
\eta_t(0)
\right)
$
is a $o(t)$. 

Let us first establish a broader bound on 
$
\dsr
\left(
\xi(t)
,
\eta_t(\tau)
\right)
$.
 For every $\tau\in (0,t)$, by applying the variation of constants formula at $\xi(\tau)$ we get
\begin{align*}
\xi(t)
&=
\chronexp \int_\tau^t(X +Z_s)\diff s(\xi(\tau))
=\chronexp \int_\tau^t 
\e^{ (s-t)\ad X}
Z_s
\diff s
\left(
\e^{(t-\tau) X}(\xi(\tau))\right)\\
&=\chronexp \int_\tau^t 
W^t_s\diff s
\left(
\e^{(t-\tau) X}(\xi(\tau))\right).
\end{align*}
On the other hand,
$$
\xi(t)=\eta_t(t)
=
\chronexp \int_\tau^t 
W^t_s
\diff s
\left(
\eta_t(\tau)\right),
$$
and therefore $\eta_t(\tau)=\e^{(t-\tau) X}(\xi(\tau))$ for all $\tau\in [0,t]$. In particular there exists $C>0$ such that
\begin{equation}\label{E:dist_eta_t}
\dsr (\xi(t),\eta_t(\tau))\leq C(t-\tau).
\end{equation}
(See Figure \ref{F:eta_t}.)
\begin{figure}[ht!]
\begin{center}
\begingroup%
    \setlength{\unitlength}{10cm}%
  \begin{picture}(1,0.6)%
    \put(0,0){\includegraphics[width=\unitlength,page=1]{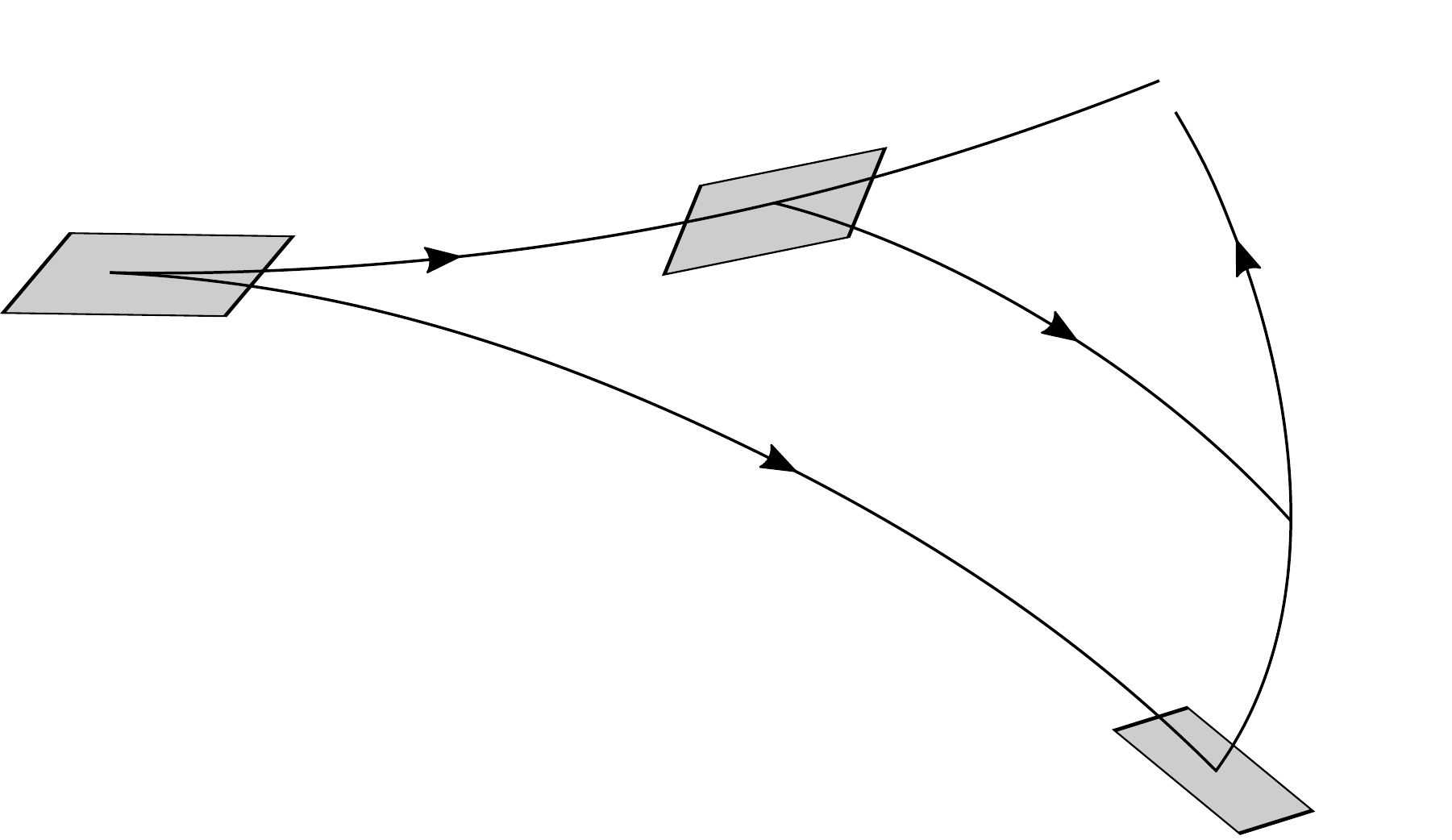}}%
    \put(0,0){\includegraphics[width=\unitlength,page=2]{dessin_revised.pdf}}%
    \put(0.87,0.06){$\gamma(t)$}%
    \put(0.06 ,0.43){ $p$ }%
    \put(0.74,0.55){$\xi(t)$}
    \put(0.24,0.42){ $X+Z_s$ }%
    \put(0.69 ,0.3){$X$}%
    \put(0.5,0.48){$\xi(\tau)$}%
    \put(0.88,0.4){$W^{t}_s$}%
    \put(0.90 ,0.21 ){$\eta_t(\tau)$}%
    \put(0.5,0.21){$X$}%
  \end{picture}%
\endgroup%
\caption{Representation of the curve $\eta_t$, where $\eta_t(\tau)$ can be seen both as the evaluation at time $\tau$ of an integral curve of the non-horizontal vector field $W^{t}_s$ and as the endpoint of the concatenation of an integral curve of $X+Z_s$ over $[0,\tau]$ and an integral curve of $X$ of duration $t-\tau$.\label{F:eta_t}}
\end{center}
\end{figure}

\noindent{\it Step 2:} bounding the pseudo-norm  centered at $\xi(t)$ of  $\gamma(t)$.

By possibly reducing $T$, we can assume that $\eta_t(\tau)\in \Omega'$ for every $0\leq \tau\leq t\leq T$.
 We then use the privileged coordinates $\Phi:\Omega\times \Omega\rightarrow \R^d$ at $\xi(t)$ to compute the pseudo-norm $\| \Phi_{\xi(t)}(\eta_t(\tau))\|_{\xi(t)}$.
 
Denote by  $(n_1,\dotsc,n_r)$  the growth vector of the sub-Riemannian structure, by $(w_1,\dotsc,w_d)$ the corresponding weights, and by $x=(x_1,\dotsc,x_d)$ the coordinates $\Phi_{\xi(t)}$.
We want  to evaluate for all $1\leq i\leq d$  the absolute value of
$$
x_i(\gamma(t))=-\int_0^t \left( W_s^t(\eta_t(s)) \right)_i \diff s.
$$

Taking $N=r$ in the expansion~\eqref{E:exp_ad} we have
\begin{equation}\label{E:expansion_Wst}
W_s^t =
Z_s+\sum_{k=1}^{r-1} \frac{(s-t)^k}{k!} \left[ X, \dotsc ,\left[X,Z_s\right]\dotsi \right]+R_r(s,s-t).
\end{equation}
The remainder $R_r$ can be bounded using \eqref{E:bound_remainder}. For each coordinate $(R_r(s,s-t))_i$, $1\leq i\leq d$, we have that
$$
\left\| (R_r(s,s-t))_i \right\|_{0,\Omega'}
\leq 
\frac{C}{r!}
\e^{C (s-t) \left\| X \right\|_{1,\Omega'} } 
(s-t)^r   \left\| X  \right\|_{r,\Omega'}^r
\left\| Z_s \right\|_{r,\Omega'}.$$
Since   $|v-u|\leq \phi$,
the compactness of $\Omega'$ and $V_{\bar{u}}$ implies the existence of $C$ uniform such that 
$$
  | (R_r(s,s-t))_i  | \leq  C (s-t)^{r}\phi(s).
$$

We now use non-holonomic order arguments (see \cite[Section 2.1]{jean2014control}) to bound the other terms in the expansion in \eqref{E:expansion_Wst}.

Let  $V^k_s=(\ad X)^{k-1}Z_s$ for $k\geq 1$. The vector fields $X$ and $Z_s$ being horizontal, $V^k_s\in \Delta^{k}$. 
As a consequence, the vector $V^k_s$ has a non-holonomic order greater than or equal to $-k$ at any $p\in \Omega'$. Coordinate-wise, at $\xi(t)$, $\mathrm{ord}_{\xi(t)}(\partial_{x_i})=-w_i$, so that 
 $(V^k_s)_i$ then has non-holonomic order 
 $$
 \mathrm{ord}_{\xi(t)}((V^k_s)_i)\geq \max(w_i-k, 0).
 $$
Since $ V^k_s$ depends linearly on $v-u$, there exists $C_k>0$ such that 
 for $q\in \Omega'$ sufficiently close to $\xi(t)$,
$$
\left| (V^k_s)_i(q)\right|
\leq 
C_k\phi(s)\,\dsr (\xi(t),q)^{\max(w_i-k, 0)}.
$$
Using estimate \eqref{E:dist_eta_t}, 
$$
\left| (V_s^k)_i(\eta_t(s))\right|
\leq 
C_k\phi(s) t^{\max(w_i-k, 0)}.
$$
Thus
$$
\int_0^t  \left|\left( Z_{s}\right)_i(\eta_t(s))\right|
   \diff s  \leq C_1 t^{\max(w_i-1, 0)} \int_0^t \phi(s) \diff s=t^{w_i}\psi^1(t),
$$
and 
$$
\int_0^t  
\left|
\left(
	(t-s)^{k-1} V_s^k
\right)_i(\eta_t(s))\right| \diff s  
\leq C_k
t^{\max(w_i-k, 0)} \int_0^t  
 (t-s)^{k-1} \phi(s)  \diff s=t^{w_i}\psi^k(t),
$$
where for all positive integer $k$ we denote by $\psi^k$ the positive bounded function $\psi^k:[0,T]\ni  t\mapsto C_k t^{-k}\int_0^t (t-s)^{k-1}\phi(s)\diff s$, which is continuous and such that $\psi^k(0)=0$ (by continuity at $0$ of $\phi$).
Thus, for all $1\leq i\leq d$,
$$
|x_i(\gamma(t))|\leq t^{w_i} \Psi_i(t), 
$$
with $\Psi_i:[0,T]\rightarrow\R^+$ a bounded function  continuous at $0$ such that $\Psi_i(0)=0$. Hence we have the uniform bound
\begin{equation}\label{E:Real_estimate}
\|\Phi_{\xi(t)}(\gamma(t))\|_{\xi(t)}\leq \sum_{i=1}^d t (\Psi_i(t))^{1/w_i}=\psi(t)\,t,
\end{equation}
with $\psi:[0,T]\rightarrow\R^+$ a  function continuous at $0$ such that $\psi(0)=0$.

\noindent {\it Step 3:} Uniform estimates.

Let $\varepsilon,C>0$ be the constants associated with the neighborhood $\Omega$ in Theorem~\ref{T:equiregular}. Take $T>0$ such that if $0\leq t\leq T$, 
$$
\dsr \left( \gamma(t), \xi(t) \right)\leq  \varepsilon.
$$
Therefore, by Theorem~\ref{T:equiregular},
$$
\dsr \left( \gamma(t),  \xi(t)  \right)\leq C\|\Phi_{\xi(t)}(\gamma(t))\|_{\xi(t)},
$$
and, plugging in \eqref{E:Real_estimate}, we get
$$
\dsr \left( \gamma(t),  \xi(t)  \right)\leq C \psi(t)\,t .
$$
Thus, letting $\omega(t)=C  \psi(t)t $, we have the uniform bound
$$
\dsr
\left(
	\e^{tX_{u}}  (p)
	,
 	\chronexp\int_0^t   X_{v(s)} \diff s (p)
\right)
\leq
\omega(t).
$$
\end{proof}

\section{$C^1_H$-Whitney condition on sub-Riemannian manifolds}\label{S:Whitney_cond}

We begin this section by proposing a definition of $C_H^1$-Whitney condition for curves in sub-Riemannian structures that requires  the choice of a  frame of the sub-Riemannian structure. We then show that such a definition is  intrinsic  and we explore a few consequences.

Recall that for a given frame $(X_1,\dots, X_m)$ and a given $u\in \R^m$, we denote by $X_u$ the horizontal vector field $\sum_{i=1}^m u_iX_i$.
\subsection{Whitney frame-wise condition}\label{SS:Equireg_Whitney_condition}

We denote by $(M,\Delta,g)$   a sub-Riemannian manifold of dimension $d$. 
Let $(X_1,\dotsc,X_m)$ be  a frame of $\Delta$ defined on an open subset $\Omega$ of $ M$.

We say that $(f,L)$ satisfies  property $\mathcal{P}_X$ on $K$ if the following is true.

\noindent
\begin{minipage}[t]{\widthof{$\mathcal{P}_X$ :}}
\noindent
$\mathcal{P}_X$ :
\end{minipage}
\begin{minipage}[t]{\linewidth-\widthof{ $\mathcal{P}_X$ :}- \parindent}
\noindent
$K$ is a compact in $\R$, $f:K\rightarrow \Omega$ and $L:K\rightarrow TM$ 
are continuous. Moreover, there exist 
$u:K\to \R^m$ continuous and $\omega:\R^+\rightarrow\R^+$ such that 
$L(t)=X_{u(t)}(f(t))$ for all $t\in K$, $\omega(s)=o(s)$ at $0^+$, and 
$$
\dsr\left(f(t),\e^{(t-s)X_{u(s)}}f(s)	\right)\leq \omega(|t-s|)\qquad\mbox{for all $t,s\in K$}.
$$
\end{minipage}

This definition is motivated by the following proposition.

\begin{proposition}\label{P:Taylor_exp}
Let $(M,\Delta,g)$ be a sub-Riemannian manifold and $(X_1,\dotsc,X_m)$ be  a frame of $\Delta$ defined on an open subset $\Omega$ of $ M$.
Let $f:\R\rightarrow \Omega$ be a $C^1_H$ curve. Then $(\left.f\right|_{K},\dot{\left.f\right|}_{K})$ satisfies  $\mathcal{P}_X$ on $K$ for any compact subset $K$ of $\R$.
\end{proposition}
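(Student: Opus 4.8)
The plan is to fix a compact $K\subset\R$ and show that the two properties demanded by $\mathcal{P}_X$ hold for the restriction $f|_K$ and $L=\dot f|_K$. Since $f$ is $C^1_H$, there is by definition a continuous $u:\R\to\R^m$ with $\dot f(t)=X_{u(t)}(f(t))$ for every $t$; so the candidate control is just the restriction $u|_K$, and the algebraic identity $L(t)=X_{u(t)}(f(t))$ is immediate. The whole content is therefore the distance estimate: one must produce a modulus $\omega(s)=o(s)$ as $s\to0^+$ such that $\dsr\bigl(f(t),\e^{(t-s)X_{u(s)}}f(s)\bigr)\le\omega(|t-s|)$ for all $t,s\in K$, uniformly.

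First I would reduce to a local statement: $f(K)$ is compact, so it can be covered by finitely many coordinate/frame neighborhoods; since $K$ is compact in $\R$ and $f$ is continuous, there is $\eta>0$ such that $|t-s|\le\eta$ forces $f(s)$ and $f(t)$ to lie in a common such neighborhood $\Omega'$ where the distance estimates apply, and for $|t-s|>\eta$ the left-hand side is bounded by a constant so the bound is trivial after adjusting $\omega$. For $|t-s|\le\eta$, write $\sigma=t-s$ and compare the two horizontal curves $\tau\mapsto \chronexp\int_0^\tau X_{u(s+\theta)}\,\diff\theta\,(f(s))$ — whose value at $\tau=\sigma$ is exactly $f(t)$, since $f$ itself is the integral curve of the time-dependent field $X_{u(\cdot)}$ — and $\tau\mapsto\e^{\tau X_{u(s)}}(f(s))$, whose value at $\tau=\sigma$ is $\e^{(t-s)X_{u(s)}}f(s)$. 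These are exactly the two curves appearing in Lemma~\ref{L:our_distance_estimate}, with $p=f(s)$, constant control $u(s)$, and variable control $\theta\mapsto u(s+\theta)$.

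To invoke Lemma~\ref{L:our_distance_estimate} I need a modulus $\phi$ controlling the oscillation of the variable control: take $\phi$ to be (a continuous majorant, continuous at $0$ with $\phi(0)=0$, of) the modulus of uniform continuity of $u$ restricted to the compact preimage in $K$, so that $|u(s+\theta)-u(s)|\le\phi(\theta)$ whenever $s,s+\theta\in K$ and $\theta\ge0$; uniform continuity of $u$ on the relevant compact set is what makes this $\phi$ legitimate, and it is precisely here that continuity (not mere measurability) of $u$ — i.e. the $C^1_H$ hypothesis — is used. The lemma then hands back $T>0$ and a modulus $\omega$ with $\omega(\tau)=o(\tau)$ such that for all $\theta\in[0,T]$ the two endpoint curves above are $\omega$-close; applied at $\theta=\sigma$ (and after shrinking $\eta\le T$ and using a finite subcover to make the constants uniform over $K$), this is the desired estimate. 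The case $s>t$ is symmetric, replacing $X_{u(s)}$'s flow direction, or absorbed by noting $\e^{(t-s)X_{u(s)}}f(s)=\bigl(\e^{(s-t)X_{u(s)}}\bigr)^{-1}f(s)$ and reversing the roles; one simply needs $\omega$ even, which costs nothing.

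The main obstacle is the uniformity: Lemma~\ref{L:our_distance_estimate} is stated at a single pair $(\bar p,\bar u)$ with neighborhoods $V_{\bar p}, V_{\bar u}$, whereas here $f(s)$ and $u(s)$ range over compact sets as $s$ varies over $K$. This is handled by a standard compactness/finite-subcover argument — cover $f(K)\times u(K)$ by finitely many product neighborhoods furnished by the lemma, take the minimum of the finitely many $T$'s, and take the (finite) pointwise maximum of the finitely many moduli $\omega$, which is still $o(\tau)$ — but it is the step that requires care to phrase correctly, together with making sure $\phi$ can be chosen once and for all (as a single modulus of continuity of $u$ on a compact neighborhood of $K$ in $\R$, which dominates all the local oscillations). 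Everything else is bookkeeping.
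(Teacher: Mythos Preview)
Your argument is essentially the paper's proof in the equiregular case: the paper also fixes $\phi$ as the modulus of uniform continuity of $u$ on a compact interval containing $K$, applies Lemma~\ref{L:our_distance_estimate} at each $s\in K$ to get a local modulus $\omega_s$ on a neighbourhood $V_s$, and then passes to a finite subcover to obtain a single $\omega=\max_i \omega_{s_i}$ that is still $o(\tau)$. So the core mechanism and the compactness bookkeeping you describe are exactly right.

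There is, however, one genuine gap. Proposition~\ref{P:Taylor_exp} is stated for an arbitrary sub-Riemannian manifold, but Lemma~\ref{L:our_distance_estimate} is proved only under the hypothesis that $(M,\Delta,g)$ is equiregular (its proof uses Theorem~\ref{T:equiregular}, which gives a continuously varying system of privileged coordinates near a \emph{regular} point). If $f(K)$ meets the singular set you cannot simply cover by coordinate neighbourhoods ``where the distance estimates apply'': no such neighbourhood of a singular point is available for the lemma as stated. The paper handles this by desingularization: near each $s\in K$ one takes an equiregular lift $\psi:\widetilde{\Omega}_s\to\Omega_s$, lifts $f$ to $\widetilde f$ with the same control $u$, applies the equiregular argument to $\widetilde f$ to get $\dsrlift(\widetilde f(t),\e^{(t-t')\widetilde X_{u(t')}}\widetilde f(t'))\le\omega_s(|t-t'|)$, and then projects using \eqref{E:proj_lift_dist} to recover the estimate for $f$ on $M$. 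Add this step and your proof is complete.
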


\begin{proof}
 Let $u\in C(\R,\R^m)$ be such that $X_{u(\tau)}(f(\tau))=\dot{f}(\tau)$ for every $\tau\in \R$. Then
$$
f(t)=\chronexp \int_s^t X_{u(\tau)} \diff \tau (f(s))
$$
for every $s,t\in \R$.

Let $K$ be a compact subset of $\R$ and $K'$ a connected compact set of $\R$ containing $K$. For every $s\in [0,+\infty)$, set  $\phi(s)=\sup_{t\in K'} |u(t+s)-u(t)|$. Notice that $\lim_{s\rightarrow 0^+}\phi(s)=0$ by uniform continuity of $u$ on compact subsets of $\R$.

Let us first consider the equiregular case. Let $s\in K$.
We set $v=u(s)\in \R^m$ and we apply Lemma~\ref{L:our_distance_estimate} at $f(s)$ with $\phi$ as above. 
We get that there exist a neighborhood $V_s\subset \R$ of $s$ and a function $\omega_s:\R^+\rightarrow\R^+$ such that $\omega_s(t)=o(t)$ at $0^+$ and
\begin{equation}\label{E:Taylor_ponct}
\dsr\left(f(t),\e^{(t-t')  X_{u(t')}}f(t')	\right)\leq \omega_s(|t-t'|)
\end{equation}
for all $t,t'\in V_s\cap K$. Taking a finite cover $V_{s_1},\dots,V_{s_N}$ of $K$ we have that $\omega(t)=\max_{1\leq i\leq N} \omega_{s_i}(t)=o(t)$ at $0^+$ and we deduce that $(\left.f\right|_{K},\dot{\left.f\right|}_{K})$ satisfies  $\mathcal{P}_X$ on $K$.

Assume now that the manifold is singular. For every $s\in K$ there exists a neighborhood $\Omega_s$ of $f(s)$ contained in $\Omega$ and a desingularization $\psi:\widetilde{\Omega}_s \rightarrow \Omega_s $ such that $\widetilde{\Omega}_s$ is an open set in an equiregular sub-Riemannian manifold $\widetilde{M}$. Let $(\widetilde{X}_1,\dots,\widetilde{X}_m)$ be the lifted frame of $(X_1,\dots ,X_m)$ on $\widetilde{\Omega}_s$.

Then we fix $\widetilde{f}(s)\in \psi^{-1}(f(s))$ and we set for all $t$ in a neighborhood of $s$
$$
\widetilde{f}(t)=\chronexp \int_s^t \widetilde{X}_{u(\tau)} \diff \tau (\widetilde{f}(s)).
$$
By construction, $\psi(\widetilde{f})=f$. 
We apply the equiregular reasoning on $\widetilde{f}$ at $s$, and we get the existence of 
 $\omega_s:\R^+\rightarrow\R^+$ such that $\omega_s(t)=o(t)$ at $0^+$ and
$$
\dsrlift \left(\widetilde{f}(t),\e^{(t-t')\widetilde{X}_{u(t')}}\widetilde{f}(t')	\right)\leq \omega_s(|t-t'|)
$$
for all $t$ and $t'$ close enough to $s$. By projecting this inequality (see Inequality~\eqref{E:proj_lift_dist}), we get 
$$
\dsr \left( f(t),\e^{(t-t') X_{u(t')}} f(t')	\right)\leq \omega_s(|t-t'|).
$$
We conclude with the same compactness argument as in the equiregular case.
\end{proof}

We then define the $C^1_H$-Whitney condition and the $C^1_H$ extension property as follows.

\begin{definition}[$C^1_H$-Whitney condition]\label{D:C1H_Whitney_cond}
Let $K$ be a closed subset of $\R$,
$f:K\rightarrow M$ continuous, and $L:K\rightarrow TM$ continuous.
We say that the \emph{$\mathcal{C}^1_H$-Whitney condition holds for $(f,L)$ on $K$} if for every $t\in K$, there exist a compact neighborhood $K'$ of $t$ in $K$, an open set $\Omega\subset M$ and a local frame $X$ of $\Delta$ on $\Omega$ such that $(\left.f\right|_{K'},\left. L\right|_{K'})$ satisfies $\mathcal{P}_X$ on $K'$.
\end{definition}

\begin{definition}[$C^1_H$ extension property]\label{D:C1H_extension_prop}
We say that a sub-Riemannian manifold $M$ has the \emph{$C^1_H$ extension property} if for all closed subset $K$ of $\R$, all pair $(f,L):K\rightarrow M\times TM$ continuous satisfying the $C^1_H$-Whitney condition, there exists a $C^1_H$ curve $\gamma:\R\rightarrow M$ such that 
$$
\gamma_{ |K}=f, 
\quad
\dot{\gamma}_{ |K}=L.
$$
\end{definition}

In the case where $(M,\Delta,g)$ is equiregular, we are able to show that the property of satisfying $\mathcal{P}_X$ is intrinsic to the curve and does not depend on the choice of the frame.

\begin{proposition}\label{P:real_Px<->Py}
Assume  $(M,\Delta,g)$ to be an equiregular sub-Riemannian manifold.
Let $(X_1,\dotsc,X_m)$ and $(Y_1,\dotsc,Y_m)$ be two frames of $\Delta$ defined on an open subset $\Omega$ of $ M$. Let $K\subset \R$ be compact and $(f,L):K\rightarrow \Omega\times TM$ be continuous.
Then $(f,L)$ satisfies $\mathcal{P}_X$ on $K$ if and only if it satisfies $\mathcal{P}_Y $ on $K$.
\end{proposition}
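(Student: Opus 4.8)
The plan is to reduce the statement to a local estimate near a point and then apply Lemma~\ref{L:our_distance_estimate} to change frames. Since property $\mathcal{P}_X$ is symmetric in the roles of $X$ and $Y$, it suffices to prove only one implication, say that $\mathcal{P}_X$ on $K$ implies $\mathcal{P}_Y$ on $K$. Assume $(f,L)$ satisfies $\mathcal{P}_X$ on $K$, with associated data $u:K\to\R^m$ continuous and $\omega$ with $\omega(s)=o(s)$ at $0^+$ such that $L(t)=X_{u(t)}(f(t))$ and $\dsr(f(t),\e^{(t-s)X_{u(s)}}f(s))\leq\omega(|t-s|)$ for all $t,s\in K$. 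Using the orthogonal change-of-frame matrix $c:\Omega\to\mathrm{O}(m)$ from Definition~\ref{D:frame}, define $v(t)=c(f(t))^{-1}u(t)$ (with the appropriate transpose/inverse convention so that $X_{u(t)}=Y_{v(t)}$ as vector fields at $f(t)$); then $v$ is continuous and $L(t)=Y_{v(t)}(f(t))$. It remains to produce $\tilde\omega(s)=o(s)$ with $\dsr(f(t),\e^{(t-s)Y_{v(s)}}f(s))\leq\tilde\omega(|t-s|)$ for all $t,s\in K$.

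The key point is the following comparison: for $s,t$ close in $K$, both $\e^{(t-s)X_{u(s)}}f(s)$ and $\e^{(t-s)Y_{v(s)}}f(s)$ should be within $o(|t-s|)$ of $f(t)$ (hence of each other), because $X_{u(s)}$ and $Y_{v(s)}$ coincide \emph{at the point} $f(s)$ even though they differ elsewhere. To make this rigorous I would fix $\bar t\in K$, set $\bar p=f(\bar t)$, and work in a compact neighborhood of $\bar p$. For $s$ in a small compact neighborhood $K'$ of $\bar t$ in $K$, compare the flow of the constant-in-time vector field $Y_{v(s)}$ with that of $X_{u(s)}=\sum_i (c(f(s))u(s))_i Y_i$, rewritten as $Y_{w(s,\tau)}$ where along the trajectory $\tau\mapsto \e^{\tau X_{u(s)}}f(s)$ the coefficient $w(s,\tau)=c\big(\e^{\tau X_{u(s)}}f(s)\big)^{-1}c(f(s))^{-1}u(s)$ — wait, more simply: write $X_{u(s)} = Y_{\tilde w(s,\cdot)}$ where $\tilde w(s,q)$ is the continuous function of $q$ with $\tilde w(s,f(s))=v(s)$. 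Then $\e^{(t-s)X_{u(s)}}f(s)$ is the value at time $t-s$ of the flow of the autonomous field $X_{u(s)}$, which I view as the flow of the time-dependent field $Y_{\tilde w(s,\xi(\tau))}$ along its own trajectory; applying Lemma~\ref{L:our_distance_estimate} with base point $f(s)$, with the constant control $v(s)$ and the time-dependent control $\tau\mapsto \tilde w(s,\xi(\tau))$, and with modulus $\phi$ chosen to dominate $\sup\{|\tilde w(s,q)-v(s)| : q \text{ near } f(s),\ \dsr(f(s),q)\le \phi_0\}$ for a suitable $\phi_0$, yields $\dsr(\e^{(t-s)X_{u(s)}}f(s),\e^{(t-s)Y_{v(s)}}f(s))\leq\omega_{\bar t}(|t-s|)$ with $\omega_{\bar t}(r)=o(r)$, uniformly for $s\in K'$. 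Combining with the triangle inequality and the hypothesis $\mathcal{P}_X$ gives $\dsr(f(t),\e^{(t-s)Y_{v(s)}}f(s))\leq\omega(|t-s|)+\omega_{\bar t}(|t-s|)$ on $K'\times K'$.

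Finally I would pass from the local estimate to a global one: cover $K$ by finitely many such neighborhoods $K'_1,\dots,K'_N$ on which $\dsr(f(t),\e^{(t-s)Y_{v(s)}}f(s))\leq \omega(|t-s|)+\omega_{\bar t_j}(|t-s|)=:\hat\omega_j(|t-s|)$; set $\hat\omega=\max_j\hat\omega_j$, still $o(r)$. For pairs $(t,s)\in K\times K$ not lying in a common $K'_j$, $|t-s|$ is bounded below by some $\delta>0$, and by compactness $\dsr(f(t),\e^{(t-s)Y_{v(s)}}f(s))$ is bounded above by some constant $C$; replacing $\hat\omega(r)$ by $\max(\hat\omega(r), (C/\delta)r)$ for $r\geq\delta$ (which keeps the $o(r)$ behavior at $0^+$ untouched) produces a single modulus $\tilde\omega$ valid on all of $K\times K$. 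This establishes $\mathcal{P}_Y$ on $K$, and by symmetry the equivalence.

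The main obstacle I anticipate is the second step: carefully setting up the flow of $X_{u(s)}$ as a perturbation of the flow of $Y_{v(s)}$ in a form to which Lemma~\ref{L:our_distance_estimate} applies, in particular verifying that the perturbation of the control ($\tilde w(s,\xi(\tau))$ versus the constant $v(s)$) is dominated by a fixed modulus $\phi$ with $\phi(0)=0$ \emph{uniformly in $s$ over the compact $K'$} — this uses continuity of $c$ together with the a priori bound $\dsr(f(s),\xi(\tau))\le C(t-s)$ analogous to \eqref{E:dist_eta_t}, so that the relevant values of $q$ stay in a controlled neighborhood of $f(s)$. The equiregularity hypothesis enters precisely here, since it is what makes Lemma~\ref{L:our_distance_estimate} available.
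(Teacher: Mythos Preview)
Your proposal is correct and follows essentially the same approach as the paper: both argue by rewriting the flow of $X_{u(s)}$ (respectively $Y_{v(s)}$) as a time-dependent flow in the other frame, then invoke Lemma~\ref{L:our_distance_estimate} to obtain the local estimate $\dsr(\e^{(t-s)X_{u(s)}}f(s),\e^{(t-s)Y_{v(s)}}f(s))=o(|t-s|)$, and conclude by triangle inequality and a finite-cover/compactness argument. You are somewhat more explicit than the paper about the mechanism behind the application of Lemma~\ref{L:our_distance_estimate} and about patching the ``far apart'' pairs, but the substance is the same.
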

\begin{proof}
Let us assume that $(f,L)$ satisfies $\mathcal{P}_X$ on $K$. In particular,
there exists $\omega_X:\R^+\rightarrow\R^+$, such that $\omega_X(t)=o(t)$ at $0^+$ and, for all $t,s\in K$,
$$
\dsr\left(f(s),\e^{(s-t)X_{u(t)}}f(t)	\right)\leq \omega_X(|s-t|),
$$
with $u:K\to \R^m$ continuous such that $L(t)=X_{u(t)}(f(t))$ for every $t\in K$.
Let us prove that $(f,L)$ satisfies $\mathcal{P}_Y$.

Let $v:K\rightarrow \R^m$ be a continuous map such that $Y_{v(t)}(f(t))=L(t)$ for every $t\in K$.
Since $(X_1,\dots,X_m)$ and $(Y_1,\dots,Y_m)$ are both frames of $\Delta$, there exist smooth functions $(c_{ij})_{1\leq i,j\leq m}$ such that for all $q\in \Omega$,
$$
Y_j(q)=\sum_{i=1}^m c_{ij}(q)X_i(q).
$$
Then
$$
\sum_{j=1}^m v_j(t) Y_j(q)=\sum_{i=1}^m \left( \sum_{j=1}^m v_j(t) c_{ij}(q)\right) X_i(q),\quad t\in K, \;q\in M.
$$

As a consequence of  Lemma~\ref{L:our_distance_estimate}, for all $t\in K$  there exist $T_t>0$,  $V_{f(t)}\subset \Omega$, $V_{u(t)}\subset \R^m$ neighborhoods of $f(t)$, $u(t)$   respectively, there exists $\omega_t:\R^+\rightarrow\R^+$ with $\omega_t(s)=o(s)$ at $0^+$, such that if $p\in V_{f(t)}$, $u\in V_{u(t)}$ and $v\in\R^m$ satisfy
$X_u(p)=Y_v(p )$
then 
$$
\dsr\left(\e^{s X_u}p,\e^{sY_v}p\right)\leq \omega_t(s) ,\quad  s\in [0,T_t].
$$

By compactness of $f(K)$, 
there exists a finite cover $V_{f(t_1)},\dots,V_{f(t_N)}$ of $f(K)$.
Then $\omega(t)=\max_{1\leq i\leq N} \omega_{t_i}(s)=o(s)$ at $0^+$ and we deduce that 
$$
\dsr\left(\e^{(s-t) X_{u(t)}}f(t),\e^{(s-t)Y_{v(t)}}f(t)\right)\leq \omega(|s-t|)
$$
for all $s,t\in K$ close enough.
Then, for $s,t\in K$ close enough,
$$
\begin{aligned}
\dsr\left(f(s),\e^{(s-t)Y_{v(t)}}f(t)	\right)
&\leq \dsr\left(f(s),\e^{(s-t)X_{u(t)}}f(t)	\right)+
\dsr\left( \e^{(s-t)X_{u(t)}}f(t)	,\e^{(s-t)Y_{v(t)}}f(t)\right)
\\
&\leq \omega_X(|s-t|)+
 \omega (|s-t|).
\end{aligned}
$$
Thus $(f,L)$ satisfies $\mathcal{P}_Y$ on $K$.
\end{proof}

\begin{remark}\label{continuous-selection}
The proof also shows that 
the definition of $\mathcal{P}_X$ does not depend on the choice of the continuous function $u$ such that 
$L(t)=X_{u(t)}(f(t))$ for every $t\in K$.
\end{remark}

\subsection{Forward and backward Whitney condition}\label{SS:Forward_Backward_WC}
To extend the study of the $\mathcal{C}^1_H$-Whitney condition to singular sub-Riemannian manifolds, we first have to break the symmetry in the definition of $\mathcal{P}_X$ by comparing only flows going forward or backward in time
(that is, by requiring either $s<t$ or $s>t$ in the statement of $\mathcal{P}_X$).
This new definition has two virtues. First, the asymmetric definition turns out to be equivalent to the symmetric one and it is easier to lift on a desingularized manifold. Second, the asymmetric definition lends itself well to the use of dilations, which will be useful in Section~\ref{S:Strong_pliability}.

Consider an equiregular sub-Riemannian manifold  $(M,\Delta,g)$ of dimension $d$, rank $m$ and step $r$. Let $(X_1,\dotsc,X_m)$   be a frame of $\Delta$ defined on an open subset $\Omega$ of $ M$.

We say that $(f,L)$ satisfies the property $\mathcal{P}_X$-forward, denoted by $\mathcal{P}_X^F$, or $\mathcal{P}_X$-backward, denoted by $\mathcal{P}_X^B$, on $K$ if the following is true.

\noindent
\begin{minipage}[t]{\widthof{$\mathcal{P}_X^F$ :}}
\noindent
$\mathcal{P}_X^F$ :
\end{minipage}
\begin{minipage}[t]{\linewidth-\widthof{ $\mathcal{P}_X$ :}- \parindent}
\noindent
$K$ is a compact subset of $\R$, $f:K\rightarrow \Omega$ is continuous, and $L:K\rightarrow TM$ is such that 
$L(t)=X_{u(t)}(f(t))$ for all $t\in K$ for some $u:K\to \R^m$ continuous. Moreover
there exists $\omega:\R^+\rightarrow\R^+$, such that $\omega(t)=o(t)$ at $0^+$ and
$$
\dsr\left(f(t),\e^{(t-s)X_{u(s)}}f(s)	\right)\leq \omega(t-s) 
\qquad
\forall t>s\in K.
$$
\end{minipage}

\noindent
\begin{minipage}[t]{\widthof{$\mathcal{P}_X^B$ :}}
\noindent
$\mathcal{P}_X^B$ :
\end{minipage}
\begin{minipage}[t]{\linewidth-\widthof{ $\mathcal{P}_X$ :}- \parindent}
\noindent
$K$ is a compact subset of $\R$, $f:K\rightarrow \Omega$ is continuous, and $L:K\rightarrow TM$  
 is such that 
$L(t)=X_{u(t)}(f(t))$ for all $t\in K$ for some $u:K\to \R^m$ continuous. 
 Moreover
there exists $\omega:\R^+\rightarrow\R^+$, such that $\omega(t)=o(t)$ at $0^+$ and
$$
\dsr\left(f(t),\e^{(t-s)X_{u(s)}}f(s)	\right)\leq \omega(s-t) 
\qquad
\forall t<s\in K.
$$
\end{minipage}

Again, we emphasize that the difference between $\mathcal{P}_X^B$ and $\mathcal{P}_X^F$ is in the requirement that either $t>s$ (for $\mathcal{P}_X^F$) or $s<t$ (for $\mathcal{P}_X^B$). 
In analogy with Definition~\ref{D:C1H_Whitney_cond}, we introduce the following notion.

\begin{definition}[Backward and forward $C^1_H$-Whitney condition]
Let $K$ be a closed subset of $\R$, $f:K\rightarrow M$ continuous, and $L:K\rightarrow TM$ continuous be such that $L(t)\in \Delta_{f(t)}$ for all $t\in K$. We say that the \emph{backward  (respectively, forward) $\mathcal{C}^1_H$-Whitney condition holds for $(f,L)$ on $K$} if for every $t\in K$ there exist a compact neighborhood $K'$ of $t$ in $K$, an open set $\Omega\subset M$ and a local frame $X$ of $\Delta$ on $\Omega$ such that $(\left.f\right|_{K'},\left. L\right|_{K'})$ satisfies $\mathcal{P}_X^B$ (respectively, $\mathcal{P}_X^F$).
\end{definition}

The reasoning in Section \ref{SS:Equireg_Whitney_condition} still holds when we consider $\mathcal{P}_X$-backward and $\mathcal{P}_X$-forward. Hence the following result.
\begin{proposition}\label{P:PXB<=>PYB}
Assume  $(M,\Delta,g)$ to be an equiregular sub-Riemannian manifold.
Let $(X_1,\dotsc,X_m)$ and $(Y_1,\dotsc,Y_m)$ be two frames of $\Delta$ defined on an open subset $\Omega$ of $ M$.
Let $K\subset \R$ be compact and $(f,L):K\rightarrow \Omega\times TM$ be continuous.
Then $(f,L)$ satisfies $\mathcal{P}_X^F$ (respectively, $\mathcal{P}_X^B$) on $K$ if and only if it satisfies $\mathcal{P}_Y^F $  (respectively, $\mathcal{P}_Y^B$)  on $K$.
\end{proposition}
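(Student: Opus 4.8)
The plan is to retrace the proof of Proposition~\ref{P:real_Px<->Py} almost verbatim, the only genuinely new point being to keep track of the sign of the time increments. Since $X$ and $Y$ play symmetric roles, I would only prove the implications $\mathcal{P}_X^F\Rightarrow\mathcal{P}_Y^F$ and $\mathcal{P}_X^B\Rightarrow\mathcal{P}_Y^B$.

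For the forward case I would start from $\mathcal{P}_X^F$ for $(f,L)$ on $K$, with remainder $\omega_X$ and continuous selection $u$ (so $L(t)=X_{u(t)}(f(t))$ and $\dsr(f(t),\e^{(t-s)X_{u(s)}}f(s))\le\omega_X(t-s)$ for $t>s$ in $K$), pick $v:K\to\R^m$ continuous with $Y_{v(t)}(f(t))=L(t)$, and write $Y_j=\sum_{i}c_{ij}X_i$ on $\Omega$, exactly as in Proposition~\ref{P:real_Px<->Py}. Then for each $t\in K$ I would invoke Lemma~\ref{L:our_distance_estimate} at $f(t)$ (with $\bar u=u(t)$, and with $\phi$ bounding the variation of the time-dependent control that represents the $Y$-flow in the $X$-frame) to get a neighbourhood of $f(t)$, a time $T_t>0$ and a remainder $\omega_t=o(\cdot)$ such that any two \emph{forward} flows $\e^{\sigma X_a}p$, $\e^{\sigma Y_b}p$ issued from a nearby $p$ with $X_a(p)=Y_b(p)$ stay within $\omega_t(\sigma)$ for $\sigma\in[0,T_t]$. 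The key observation is that only nonnegative times $\sigma$ occur here — exactly the range Lemma~\ref{L:our_distance_estimate} covers — which is precisely why the one-sided hypothesis $t>s$ (so $\sigma=t-s>0$) suffices and the case $s>t$ is never needed. A finite cover of $f(K)$ and the maximum of the $\omega_t$'s would then give
\[
\dsr\!\left(\e^{(t-s)X_{u(s)}}f(s),\ \e^{(t-s)Y_{v(s)}}f(s)\right)\le\omega(t-s)
\]
for $t>s$ close enough in $K$, and a triangle inequality with the hypothesis would yield $\dsr(f(t),\e^{(t-s)Y_{v(s)}}f(s))\le\omega_X(t-s)+\omega(t-s)$, i.e. $\mathcal{P}_Y^F$.

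For the backward case I would run the identical argument, with one twist in the comparison step: for $t<s$ the flow $\e^{(t-s)X_{u(s)}}f(s)$ has negative time exponent, but it equals $\e^{(s-t)(-X)_{u(s)}}f(s)$, a \emph{forward} flow of the frame $(-X_1,\dots,-X_m)$ — which is again a frame on $\Omega$ (take $(-e_1,\dots,-e_m)$ as Euclidean frame), related to $(-Y_1,\dots,-Y_m)$ by the same matrix $c$. Since $(-X)_{u(s)}(f(s))=-L(s)=(-Y)_{v(s)}(f(s))$, applying Lemma~\ref{L:our_distance_estimate} to the frames $(-X_i)$, $(-Y_i)$ together with a finite cover of $f(K)$ would bound $\dsr(\e^{(t-s)X_{u(s)}}f(s),\e^{(t-s)Y_{v(s)}}f(s))$ by $\omega(s-t)$ for $t<s$ close, and a triangle inequality with $\mathcal{P}_X^B$ would give $\mathcal{P}_Y^B$. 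I do not expect a genuine obstacle: the whole analytic content is already contained in Lemma~\ref{L:our_distance_estimate} and in the proof of Proposition~\ref{P:real_Px<->Py}, and the only care required is the sign bookkeeping — using the comparison estimate only on intervals $[0,\sigma]$ with $\sigma\ge 0$, and in the backward case replacing $X$, $Y$ by their opposites to turn backward flows into forward ones.
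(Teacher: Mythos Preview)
Your proposal is correct and follows exactly the route the paper indicates: the paper does not give a separate proof of Proposition~\ref{P:PXB<=>PYB} but simply states that ``the reasoning in Section~\ref{SS:Equireg_Whitney_condition} still holds when we consider $\mathcal{P}_X$-backward and $\mathcal{P}_X$-forward''. Your explicit sign bookkeeping --- in particular the observation that the backward case reduces to the forward one for the frames $(-X_i)$, $(-Y_i)$ --- is in fact more careful than what the paper spells out even in the proof of Proposition~\ref{P:real_Px<->Py} itself.
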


Proposition~\ref{P:lim_dilation_forward} below reformulates the forward  $C^1_H$-Whitney condition using dilations in privileged coordinates.

\begin{proposition}\label{P:lim_dilation_forward}
Let $(M,\Delta,g)$ be an equiregular sub-Riemannian manifold. 
Let $K\subset \R$ be compact set, $\Omega$ be an open subset of $M$ and $(f,L):K\rightarrow \Omega\times TM$ be continuous. Let $u:K\to \R^m$ be continuous such that $L(t)=X_{u(t)}(f(t))$ for every $t\in K$.
Assume that there exists a continuously varying system of privileged coordinates $\Phi:\Omega\times \Omega \rightarrow \R^d$ as in Theorem~\ref{T:equiregular}. Then
the pair $(f,L)$ satisfies $\mathcal{P}_X^F$  if and only if for all $l\in K$, for all sequences $(a_n)_{n\in \N}$, $(b_n)_{n\in \N}$ in $K$ such that $a_n<b_n$ and $a_n,b_n\rightarrow l\in K$, we have 
$$
\lim_{n\to \infty} \delta^{f(b_n)}_{\frac{1}{b_n-a_n}}(f(a_n))= \e^{-\widehat{X}_{u(l)}}(0).
$$
\end{proposition}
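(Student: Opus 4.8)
The plan is to transfer the estimate defining $\mathcal{P}_X^F$ into privileged coordinates by means of the Ball--Box estimate (Theorem~\ref{T:equiregular}), and then to identify the relevant rescaled flow as the conjugate of the flow of $X_{u(\cdot)}$ by a dilation, which converges to the flow of the nilpotent approximation; here $\widehat{X}_{u(l)}$ is understood as the nilpotent approximation \emph{at $f(l)$} of the horizontal vector field $X_{u(l)}$. As a preliminary reformulation, since $\dsr(f(t),\e^{(t-s)X_{u(s)}}f(s))$ is bounded on $K\times K$, the property $\mathcal{P}_X^F$ is equivalent to requiring that $\dsr(f(t),\e^{(t-s)X_{u(s)}}f(s))/(t-s)\to 0$ uniformly over $s<t$ in $K$ as $t-s\to 0^+$. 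Writing $h=t-s$ and using that $\delta^p_{1/h}=\mathfrak{d}_{1/h}\circ\Phi_p$ together with $\|\delta^p_{\lambda}(q)\|_p=\lambda\|\Phi_p(q)\|_p$, one has $\|\Phi_{f(t)}(\e^{hX_{u(s)}}f(s))\|_{f(t)}=h\,\|\delta^{f(t)}_{1/h}(\e^{hX_{u(s)}}f(s))\|_{f(t)}$, so (for $h$ small, both points lying in $\Omega$ by compactness of $f(K)$ and $u(K)$) Theorem~\ref{T:equiregular} turns $\mathcal{P}_X^F$ into the condition that $\delta^{f(t)}_{1/h}(\e^{hX_{u(s)}}f(s))\to 0$ in $\R^d$ uniformly over $s<t$ in $K$ as $h=t-s\to 0^+$.

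The key fact I would use is the conjugation identity $\delta^p_{1/h}\circ\e^{\pm hX_v}\circ(\delta^p_{1/h})^{-1}=\e^{\pm h\,(\delta^p_{1/h})_*X_v}$ together with the convergence $h\,(\delta^p_{1/h})_*X_v\to\widehat{X}_v$, uniform on compact subsets of $\R^d$, which is the very definition of the nilpotent approximation (see \cite[Proposition~10.48]{ABB_nov_2016} and \cite[Section~2.1]{jean2014control}). Since $\Phi$ is a \emph{continuously varying} system of privileged coordinates, this convergence is moreover uniform in $(p,v)$ near $(f(l),u(l))$, so that $h\,(\delta^p_{1/h})_*X_v\to\widehat{X}_{u(l)}$ uniformly on compacts as $(h,p,v)\to(0^+,f(l),u(l))$; by continuous dependence of flows on their generating field, $\delta^p_{1/h}\circ\e^{\pm hX_v}\circ(\delta^p_{1/h})^{-1}\to\e^{\pm\widehat{X}_{u(l)}}$ uniformly on compact subsets of $\R^d$ along the same limit. (For $h$ small the left-hand side is defined on any prescribed compact, since $(\delta^p_{1/h})^{-1}$ sends it into $\Omega$; completeness is ensured by a cut-off as in \eqref{E:var_of_constants}.)

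For the direct implication, fix $l\in K$ and sequences $a_n<b_n$ in $K$ with $a_n,b_n\to l$, and set $h_n=b_n-a_n\to 0$. Applying $\mathcal{P}_X^F$ at $t=b_n$, $s=a_n$ and the reformulation above gives $\delta^{f(b_n)}_{1/h_n}(\e^{h_nX_{u(a_n)}}f(a_n))\to 0$. Writing $\Lambda_n=\delta^{f(b_n)}_{1/h_n}$ and
$$
\Lambda_n(f(a_n))=\bigl(\Lambda_n\circ\e^{-h_nX_{u(a_n)}}\circ\Lambda_n^{-1}\bigr)\bigl(\Lambda_n(\e^{h_nX_{u(a_n)}}f(a_n))\bigr),
$$
the conjugated map tends to $\e^{-\widehat{X}_{u(l)}}$ uniformly on compacts while its argument tends to $0$, so $\delta^{f(b_n)}_{1/h_n}(f(a_n))\to\e^{-\widehat{X}_{u(l)}}(0)$, which is the asserted limit.

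For the converse I would argue by contradiction. If $\mathcal{P}_X^F$ fails, the reformulation yields $\varepsilon_0>0$ and $s_n<t_n$ in $K$ with $h_n=t_n-s_n\to 0$ and $\dsr(f(t_n),\e^{h_nX_{u(s_n)}}f(s_n))\ge\varepsilon_0 h_n$; passing to a subsequence, $s_n\to l\in K$ and hence $t_n\to l$. Applying the hypothesis to $a_n=s_n$, $b_n=t_n$ gives $\delta^{f(t_n)}_{1/h_n}(f(s_n))\to\e^{-\widehat{X}_{u(l)}}(0)$; composing with the conjugated flow of the previous paragraph, $\delta^{f(t_n)}_{1/h_n}(\e^{h_nX_{u(s_n)}}f(s_n))\to\e^{\widehat{X}_{u(l)}}(\e^{-\widehat{X}_{u(l)}}(0))=0$, so by the Ball--Box upper bound $\dsr(f(t_n),\e^{h_nX_{u(s_n)}}f(s_n))\le C\,h_n\,\|\delta^{f(t_n)}_{1/h_n}(\e^{h_nX_{u(s_n)}}f(s_n))\|_{f(t_n)}=o(h_n)$, contradicting the choice of $s_n,t_n$. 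I expect the main obstacle to be the uniformity in the base point $p$ \emph{and} in the control $v$ of the convergence $h\,(\delta^p_{1/h})_*X_v\to\widehat{X}_{u(l)}$, and of the induced convergence of the conjugated flows on compact sets, which rests on the continuity of the continuously varying privileged coordinates and of the nilpotent approximation as a function of the base point; a subsidiary but necessary check is that all flows and dilations are evaluated inside $\Omega$ for $h$ small, which follows from compactness of $f(K)$ together with cut-offs.
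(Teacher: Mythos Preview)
Your proposal is correct and follows essentially the same route as the paper's own proof: both directions rest on the Ball--Box comparison of $\dsr$ with the pseudo-norm, the conjugation identity $\delta^{f(b_n)}_{1/t_n}\circ \e^{\pm t_n X_{u(a_n)}}\circ (\delta^{f(b_n)}_{1/t_n})^{-1}=\e^{\pm t_n(\delta^{f(b_n)}_{1/t_n})_*X_{u(a_n)}}$, and the local uniform convergence $t_n(\delta^{f(b_n)}_{1/t_n})_*X_{u(a_n)}\to\widehat{X}_{u(l)}$; the converse in both cases is a contradiction argument via subsequence extraction. Your write-up is in fact slightly more explicit than the paper in flagging that the convergence of the rescaled fields must be uniform in the base point and the control (the paper simply invokes continuity of $u$, $f$ and \cite[Proposition~10.48]{ABB_nov_2016}), which is indeed the only delicate point.
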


\begin{proof}
Let $l\in K$, $(a_n)_{n\in \N}$, $(b_n)_{n\in \N}$ in $K$ be such that $a_n<b_n$ and $a_n,b_n\rightarrow l\in K$. By  assumption there exists two positive  constants $\varepsilon $, $C $, such that 
for every pair $(p,q)\in \Omega\times \Omega$ with $\dsr(p,q)\leq \varepsilon $, it holds 
$$
\frac{1}{C } \| \Phi_p (q) \|_p\leq \dsr(p,q) \leq C\| \Phi_p(q)\|_p.
$$
Then, for $n$ large enough,
\begin{multline}\label{E:inequality_forward}
\frac{1}{C}
\left\|
\Phi_{f(b_n)} \left( \e^{(b_n-a_n) X_{u(a_n)} }f(a_n)\right)
\right\|_{f(b_n)}
\leq
\\
\dsr\left(
f(b_n)
,
\e^{(b_n-a_n) X_{u(a_n)}}  f(a_n)
\right)
\\
\leq 
C
\left\|
\Phi_{f(b_n)} \left( \e^{(b_n-a_n) X_{u(a_n)} }f(a_n)\right)
\right\|_{f(b_n)}.
\end{multline}
By introducing a dilation in the pseudo norm, we get
$$
\begin{aligned}
\left\|
\Phi_{f(b_n)} \left( \e^{(b_n-a_n) X_{u(a_n)} }f(a_n)\right)
\right\|_{f(b_n)}
&= 
(b_n-a_n)\left\| \mathfrak{d}_{\frac{1}{b_n-a_n}}\circ\Phi_{f(b_n)} \left( \e^{(b_n-a_n) X_{u(a_n)} }f(a_n) \right) \right\|_{f(b_n)}
\\
&
=
(b_n-a_n)\left\|
\delta_{\frac{1}{b_n-a_n}}^{f(b_n)}\left(\e^{(b_n-a_n) X_{u(a_n)}}f(a_n) \right)
\right\|_{f(b_n)}.
\end{aligned}
$$
Denoting $t_n=b_n-a_n$, we get
$$
\delta_{1/t_n}^{f(b_n)}\left(\e^{t_nX_{u(a_n)}}\left(f(a_n)\right)\right)
=
\e^{t_n {\delta_{1/t_n}^{f(b_n)}}_*X_{u(a_n)}}\left(\delta_{ 1/t_n}^{f(b_n)}\left(f(a_n)\right)\right).
$$
Hence 
\begin{equation}\label{E:dilation_flow_pseudo_norm}
\delta_{ 1/t_n}^{f(b_n)}\left(f(a_n)\right)=
\e^{-t_n {\delta_{1/t_n}^{f(b_n)}}_*X_{u(a_n)}}\left(   \delta_{1/t_n}^{f(b_n)}\left(\e^{t_nX_{u(a_n)}}\left(f(a_n)\right)\right)\right).
\end{equation}
Since $u$ and $f$ are continuous on $K$ and $a_n,b_n\rightarrow l\in K$, $t_n {\delta_{1/t_n}^{f(b_n)}}_* X_{u(a_n)}$ locally uniformly converges towards $\widehat{X}_{u(l)}$.
This is a consequence of the local uniform convergence of $\lambda \delta_{1/\lambda*}X_u$ towards $\widehat{X}_u$ as $\lambda\rightarrow 0$ (see \cite[Proposition 10.48]{ABB_nov_2016}).
Thus 
$
\e^{-t_n {\delta_{1/t_n}^{f(b_n)}}_*X_{u(a_n)}}
$
locally uniformly converges towards $e^{- \widehat{X}_{u(l)}} $.

If $(f,L)$ satisfies $\mathcal{P}_X^F$ then Equation~\eqref{E:inequality_forward} implies that 
$$
\lim_{n\to \infty} \left\| \delta_{1/t_n }^{f(b_n)}\left(\e^{t_nX_{u(a_n)}}f(a_n) \right)
\right\|_{f(b_n)}
= 0.
$$
It follows from \eqref{E:dilation_flow_pseudo_norm} and the local uniform convergence of $\e^{-t_n {\delta_{1/t_n}^{f(b_n)}}_*X_{u(a_n)}} $ towards $e^{- \widehat{X}_{u(l)}} $ that
$$
\lim_{n\to \infty}
\delta^{f(b_n)}_{ 1/t_n }(f(a_n))
=  \e^{-\widehat{X}_{u(l)}}(0).
$$

Conversely, assume now that for all $l\in K$, for all sequences $(a_n)_{n\in \N}$, $(b_n)_{n\in \N}$ in $K$ such that $a_n<b_n$ and $a_n,b_n\rightarrow l\in K$, we have
$$
\lim_{n\to \infty} \delta^{f(b_n)}_{\frac{1}{b_n-a_n}}(f(a_n))= \e^{-\widehat{X}_{u(l)}}(0).
$$

To prove that $(f,L)$ satisfies $\mathcal{P}_X^F$, we prove that 
$$
\lim_{t\rightarrow 0}\sup_{\substack{a,b\in K\\0<b-a<t}}
\frac{1}{b-a}
\dsr\left(
f(b)
,
\e^{(b-a) X_{u(a)}}  f(a)
\right)=0.
$$
Thanks to estimate \eqref{E:inequality_forward}, we are left to prove that
$$
\lim_{t\rightarrow 0}\sup_{\substack{a,b\in K\\0<b-a<t}}
\left\|
\e^{(b-a) {\delta_{1/(b-a)}^{f(b)}}_*X_{u(a)}}\left(\delta_{1/(b-a)}^{f(b)}\left(f(a)\right)\right)
\right\|_{f(b)}=0.
$$

Assume that there exist $\eta>0$,  $(a_n)_{n\in\N}$  and $(b_n)_{n\in\N}$ in $K$ such that, for all $n\in \N$, $0<b_n - a_n<1/n$ and 
$$
\left\|
\e^{(b_n-a_n) {\delta_{1/(b_n-a_n)}^{f(b_n)}}_*X_{u(a_n)}}\left(\delta_{1/(b_n-a_n)}^{f(b_n)}\left(f(a_n)\right)\right)
\right\|_{f(b_n)}>\eta.
$$
Up to extraction, the sequences $(a_n)_{n\in\N}$ and $(b_n)_{n\in\N}$  converge to some $l\in K$, so that 
$$
 \delta^{f(b_n)}_{\frac{1}{b_n-a_n}}(f(a_n)) \rightarrow \e^{-\widehat{X}_{u(l)}}(0).
$$
Moreover
$$
\e^{(b_n-a_n) {\delta_{1/(b_n-a_n)}^{f(b_n)}}_*X_{u(a)}} \rightarrow\e^{\widehat{X}_{u(l)}}
$$
locally uniformly on $\R^d$. Thus
$$
\e^{(b_n-a_n) {\delta_{1/(b_n-a_n)}^{f(b_n)}}_*X_{u(a_n)}}\left(\delta_{1/(b_n-a_n)}^{f(b_n)}\left(f(a_n)\right)\right)\longrightarrow 0
$$
in $\R^d$, concluding the contradiction argument.
\end{proof}

With an analogous proof we obtain the similar backward result.
\begin{proposition}\label{P:lim_dilation_backward}
Let $(M,\Delta,g)$ be an equiregular sub-Riemannian manifold. Let $K\subset \R$ be compact and $(f,L):K\rightarrow \Omega\times TM$ be continuous.
Let $u:K\to \R^m$ be continuous such that $L(t)=X_{u(t)}(f(t))$ for every $t\in K$.
Assume that there exists a continuously varying system of privileged coordinates $\Phi:\Omega\times \Omega \rightarrow \R^d$ as in Theorem~\ref{T:equiregular}.
Then the pair $(f,L)$  satisfies $\mathcal{P}_X^B$ on a compact $K\subset \R$ if and only if for all $l\in K$, for all sequences $(a_n)_{n\in \N}$, $(b_n)_{n\in \N}$ in $K$ such that $a_n<b_n$ and $a_n,b_n\rightarrow l\in K$, we have that 
$$
\lim_{n\to \infty} \delta^{f(a_n)}_{\frac{1}{b_n-a_n}}(f(b_n))= \e^{\widehat{X}_{u(l)}}(0).
$$
\end{proposition}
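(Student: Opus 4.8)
The plan is to mimic, mutatis mutandis, the proof of Proposition~\ref{P:lim_dilation_forward}, exploiting the symmetry between the forward and backward conditions. Recall that $\mathcal{P}_X^B$ asks for a control on $\dsr(f(t),\e^{(t-s)X_{u(s)}}f(s))$ when $t<s$; renaming the indices so that $a<b$ in $K$, the relevant quantity becomes $\dsr(f(a),\e^{-(b-a)X_{u(b)}}f(b))$, i.e.\ we compare $f(a)$ with the point reached by flowing \emph{backward} along $X_{u(b)}$ from $f(b)$ for time $b-a$. This is why the dilation is now centered at $f(a)$ rather than at $f(b)$, and why the limit involves $\e^{+\widehat{X}_{u(l)}}(0)$ instead of $\e^{-\widehat{X}_{u(l)}}(0)$.

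First I would fix $l\in K$ and sequences $a_n<b_n$ in $K$ with $a_n,b_n\to l$, set $t_n=b_n-a_n$, and invoke Theorem~\ref{T:equiregular} to sandwich $\dsr(f(a_n),\e^{-t_nX_{u(b_n)}}f(b_n))$ between two multiples of $\|\Phi_{f(a_n)}(\e^{-t_nX_{u(b_n)}}f(b_n))\|_{f(a_n)}$, valid for $n$ large since $f$ is continuous and the distance goes to $0$. Next, as in the forward proof, I would pull the factor $t_n$ out of the pseudo-norm via $\mathfrak{d}_{1/t_n}$, so that the pseudo-norm becomes $t_n\|\delta_{1/t_n}^{f(a_n)}(\e^{-t_nX_{u(b_n)}}f(b_n))\|_{f(a_n)}$, and use the conjugation identity
$$
\delta_{1/t_n}^{f(a_n)}\bigl(\e^{-t_nX_{u(b_n)}}(f(b_n))\bigr)
=
\e^{-t_n\,{\delta_{1/t_n}^{f(a_n)}}_*X_{u(b_n)}}\bigl(\delta_{1/t_n}^{f(a_n)}(f(b_n))\bigr),
$$
which rearranges to express $\delta_{1/t_n}^{f(a_n)}(f(b_n))$ in terms of $\delta_{1/t_n}^{f(a_n)}(\e^{-t_nX_{u(b_n)}}f(b_n))$. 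Since $u$ and $f$ are continuous and $a_n,b_n\to l$, the vector fields $t_n\,{\delta_{1/t_n}^{f(a_n)}}_*X_{u(b_n)}$ converge locally uniformly to $\widehat{X}_{u(l)}$ by \cite[Proposition 10.48]{ABB_nov_2016}, hence $\e^{t_n{\delta_{1/t_n}^{f(a_n)}}_*X_{u(b_n)}}\to\e^{\widehat{X}_{u(l)}}$ locally uniformly.

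For the direct implication, if $(f,L)$ satisfies $\mathcal{P}_X^B$ then the lower bound in the Ball--Box estimate forces $\|\delta_{1/t_n}^{f(a_n)}(\e^{-t_nX_{u(b_n)}}f(b_n))\|_{f(a_n)}\to0$, i.e.\ $\delta_{1/t_n}^{f(a_n)}(\e^{-t_nX_{u(b_n)}}f(b_n))\to0$ in $\R^d$, and applying $\e^{\widehat{X}_{u(l)}}$ (using the local uniform convergence above) gives $\delta_{1/t_n}^{f(a_n)}(f(b_n))\to\e^{\widehat{X}_{u(l)}}(0)$, which is the desired limit. For the converse, I would argue by contradiction exactly as in Proposition~\ref{P:lim_dilation_forward}: if $\mathcal{P}_X^B$ fails there exist $\eta>0$ and sequences $a_n<b_n$ with $b_n-a_n<1/n$ along which the dilated pseudo-norm stays above $\eta$; extracting a convergent subsequence $a_n,b_n\to l$, the hypothesis gives $\delta_{1/t_n}^{f(a_n)}(f(b_n))\to\e^{\widehat{X}_{u(l)}}(0)$, and then the conjugation identity together with $\e^{-t_n{\delta_{1/t_n}^{f(a_n)}}_*X_{u(b_n)}}\to\e^{-\widehat{X}_{u(l)}}$ yields $\delta_{1/t_n}^{f(a_n)}(\e^{-t_nX_{u(b_n)}}f(b_n))\to0$, contradicting the lower bound $\eta$. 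I do not anticipate a genuine obstacle here: the only points requiring care are bookkeeping the reversal of roles ($a\leftrightarrow b$ and the sign of the exponent) consistently throughout, and checking that the uniform version of the contradiction argument (the $\sup$ over $0<b-a<t$) transfers verbatim; both are mechanical once the forward proof is in hand, which is presumably why the authors simply assert ``with an analogous proof.''
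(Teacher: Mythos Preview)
Your proposal is correct and is precisely the analogous argument the paper intends: you have correctly identified that $\mathcal{P}_X^B$ amounts to controlling $\dsr\left(f(a_n),\e^{-t_nX_{u(b_n)}}f(b_n)\right)$, centered the dilation and the Ball--Box estimate at $f(a_n)$, and tracked the sign change in the exponent so that the limit becomes $\e^{\widehat{X}_{u(l)}}(0)$. The remaining steps (local uniform convergence of the rescaled vector fields, and the contradiction argument for the converse) carry over verbatim, exactly as you describe and as the authors indicate by writing ``with an analogous proof.''
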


We are now ready to prove that forward and backward $C^1_H$-Whitney conditions are equivalent.

\begin{proposition}\label{P:Wg<=>Wd}
Assume  $(M,\Delta,g)$ to be an equiregular sub-Riemannian manifold.
Let $(X_1,\dotsc,X_m)$ be a frame of $\Delta$ defined on an open subset $\Omega$ of $ M$.
Let $K\subset \R$ be compact and $(f,L):K\rightarrow \Omega\times TM$ be continuous.
The pair $(f,L)$  satisfies $\mathcal{P}_X^F$ on $K$ if and only if $(f,L)$ satisfies $\mathcal{P}_X^B $ on $K$.
Both are equivalent to $(f,L)$ satisfying $\mathcal{P}_X $ on $K$.
\end{proposition}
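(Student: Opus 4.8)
The plan is to reduce the statement to the single implication $\mathcal{P}_X^F\Rightarrow\mathcal{P}_X^B$ and to prove that one through the dilation characterizations of Propositions~\ref{P:lim_dilation_forward}--\ref{P:lim_dilation_backward}.

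Fix one continuous $u$ with $L=X_u\circ f$; its choice is immaterial by Remark~\ref{continuous-selection}. The implications $\mathcal{P}_X\Rightarrow\mathcal{P}_X^F$ and $\mathcal{P}_X\Rightarrow\mathcal{P}_X^B$ are trivial, and $\mathcal{P}_X^F$ together with $\mathcal{P}_X^B$ gives $\mathcal{P}_X$ (take $\omega=\max(\omega^F,\omega^B)$, the pair $s=t$ being vacuous); so once $\mathcal{P}_X^F\Leftrightarrow\mathcal{P}_X^B$ is known, all three conditions are equivalent. Moreover, the reparametrization $t\mapsto -t$ sends $(f,L)$ to $(f(-\,\cdot),-L(-\,\cdot))$ on $-K$, leaves the frame unchanged (since $-X_v=X_{-v}$), and interchanges the defining inequalities of $\mathcal{P}_X^F$ and $\mathcal{P}_X^B$; hence the implication $\mathcal{P}_X^F\Rightarrow\mathcal{P}_X^B$ (asserted for all continuous pairs) is equivalent to its converse, and it is enough to prove it. Since only the near-diagonal behaviour of $(f,L)$ will be used, a routine localization (together, if needed, with a cut-off of the frame outside a compact set to guarantee completeness) lets us assume $f(K)\subset\Omega$ with a continuously varying system of privileged coordinates $\Phi$ as in Theorem~\ref{T:equiregular} available on $\Omega$.

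Assume $\mathcal{P}_X^F$; we verify the condition of Proposition~\ref{P:lim_dilation_backward}. Fix $l\in K$ and $a_n<b_n$ in $K$ with $a_n,b_n\to l$, put $t_n=b_n-a_n$, and set $m_n=\e^{t_nX_{u(a_n)}}f(a_n)$. Applying $\mathcal{P}_X^F$ to the pair $(a_n,b_n)$ gives $\dsr(f(b_n),m_n)\le\omega(t_n)=o(t_n)$, whereas $m_n$ is the endpoint of a horizontal curve of length $O(t_n)$ issued from $f(a_n)$, so $\dsr(f(a_n),m_n)=O(t_n)$ and therefore $\dsr(f(a_n),f(b_n))=O(t_n)$. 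Writing $(x_1,\dots,x_d)=\Phi_{f(a_n)}$, each $x_i$ has non-holonomic order $w_i$ at $f(a_n)$, so its derivative along a unit horizontal vector is, uniformly for $n$ large, an $O(\dsr(f(a_n),\cdot)^{\,w_i-1})$; integrating along a horizontal path from $f(b_n)$ to $m_n$ of length $\le 2\dsr(f(b_n),m_n)=o(t_n)$, all of whose points stay within $\dsr$-distance $O(t_n)$ of $f(a_n)$, yields $|x_i(m_n)-x_i(f(b_n))|=O(t_n^{\,w_i-1})\,o(t_n)=o(t_n^{\,w_i})$. Since $\mathfrak d_{1/t_n}$ multiplies the $i$-th coordinate by $t_n^{-w_i}$, this shows $\delta_{1/t_n}^{f(a_n)}(f(b_n))-\delta_{1/t_n}^{f(a_n)}(m_n)\to 0$ in $\R^d$. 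On the other hand $\delta_{1/t_n}^{f(a_n)}(f(a_n))=0$ and dilations conjugate the flow of $X_{u(a_n)}$ into the flow of its rescaled pushforward, so $\delta_{1/t_n}^{f(a_n)}(m_n)=\e^{t_n{\delta_{1/t_n}^{f(a_n)}}_*X_{u(a_n)}}(0)$, where $t_n{\delta_{1/t_n}^{f(a_n)}}_*X_{u(a_n)}\to\widehat X_{u(l)}$ locally uniformly on $\R^d$ (as in the proof of Proposition~\ref{P:lim_dilation_forward}, using the continuity of $f$ and $u$). Hence $\delta_{1/t_n}^{f(a_n)}(f(b_n))\to\e^{\widehat X_{u(l)}}(0)$, which by Proposition~\ref{P:lim_dilation_backward} is exactly the statement that $\mathcal{P}_X^B$ holds.

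The genuinely delicate point, and the one I expect to be the main obstacle, is the claim that $\delta_{1/t_n}^{f(a_n)}$ carries the $o(t_n)$-close pair $(m_n,f(b_n))$ to a pair with vanishing separation: the dilation magnifies the $i$-th coordinate by the huge factor $t_n^{-w_i}$, and one must invoke the graded structure of privileged coordinates to see that a gain of $o(t_n)$ in sub-Riemannian distance produces a gain of $o(t_n^{\,w_i})$ in the coordinate $x_i$, exactly cancelling the magnification. Everything else --- the elementary implications, the time-reversal symmetry, the localization, the conjugation identity for dilations, and the uniform convergence of the rescaled frames --- is bookkeeping or already recorded in the preceding subsections.
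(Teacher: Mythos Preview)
Your proof is correct and follows the same strategy as the paper's: reduce by symmetry to $\mathcal{P}_X^F\Rightarrow\mathcal{P}_X^B$, invoke the dilation criterion of Proposition~\ref{P:lim_dilation_backward}, compute $\delta_{1/t_n}^{f(a_n)}(m_n)\to\e^{\widehat X_{u(l)}}(0)$ via the convergence of rescaled fields, and show that $\delta_{1/t_n}^{f(a_n)}(f(b_n))$ and $\delta_{1/t_n}^{f(a_n)}(m_n)$ have the same limit. The only difference is in that last step: the paper connects $m_n$ to $f(b_n)$ by a control $v_n$ with $\|v_n\|_\infty\le 2\dsr(m_n,f(b_n))$, conjugates the resulting flow by the dilation, and uses $v_n/t_n\to 0$ together with $t_n{\delta_{1/t_n}}_*X_i\to\widehat X_i$ to see that the conjugated flow tends to the identity; you instead estimate each coordinate increment $|x_i(m_n)-x_i(f(b_n))|$ directly via the non-holonomic order bound $|X_j x_i|=O(\dsr(f(a_n),\cdot)^{w_i-1})$ integrated along a near-minimizing path. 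These are two phrasings of the same graded estimate, and both rely on the same uniformity in the base point (continuity of $\Phi$), so the approaches are essentially equivalent.
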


\begin{proof}
By symmetry of the definitions, we only   prove that $\mathcal{P}_X^F \Rightarrow \mathcal{P}_X^B $ using Proposition~\ref{P:lim_dilation_backward}. The converse would use Proposition~\ref{P:lim_dilation_forward}.

By applying Theorem~\ref{T:equiregular} at $f(l)$, we select a compact neighborhood $\Omega$ of $f(l)$, a continuously varying system of privileged coordinates $\Phi$ on $\Omega$ and two constants $\varepsilon$ and $C$ such that for all $(p,q)\in \Omega\times \Omega$ with $\dsr(p,q)\leq \varepsilon$,
$$
\frac{1}{C}\left\|\Phi_p(q)\right\|_{p}
\leq 
\dsr(p,q)
\leq 
C\left\|\Phi_p(q)\right\|_{p}.
$$

Let $(a_n)_{n\in \N}$ and $(b_n)_{n\in \N}$ be two sequences in $  K$ such that $a_n<b_n$ and $\lim a_n=\lim b_n=l\in K$.  Let us denote by $t_n=b_n-a_n$. Fix $u:K\to \R^m$ continuous such that $L(t)=X_{u(t)}(f(t))$ for every $t\in K$.

By assumption $\frac{1}{t_n}\dsr\left(\e^{t_n X_{u(a_n)}}f(a_n),f(b_n)	\right)\rightarrow 0$. Applying Proposition~\ref{P:lim_dilation_backward}, let us prove that
$$
\delta_{1/t_n}^{f(a_n)}(f(b_n))
\xrightarrow[n \to +\infty]{} 
\e^{
\widehat{X}_{u(l)}}(0) .
$$

Since $\dsr\left(\e^{t_n X_{u(a_n)}}f(a_n),f(b_n)\right)\rightarrow 0$, there exists   sequence $(v_n)_{n\in\N}$ of controls, $v_n:[0,1]\rightarrow \R^m$, such that 
$$
|v_n|\leq 2 \dsr\left(f(b_n),\e^{t_n X_{u(a_n)}}f(a_n)\right)
$$
 almost everywhere on $[0,1]$ and 
$$
f(b_n)=\chronexp\int_0^1 X_{v_n(s)}\diff s \left(\e^{t_n X_{u(a_n)}}f(a_n)\right).
$$
Then
\begin{equation}\label{E:star}
\delta_{1/t_n}^{f(a_n)}(f(b_n))
=
\chronexp\int_0^1 {\delta_{1/t_n}^{f(a_n)}}_* X_{v_n(s)}\diff s 
\left(
\delta_{1/t_n}^{f(a_n)}
	\left(
		\e^{t_n X_{u(a_n)}}f(a_n)
	\right)
\right).
\end{equation}
Now, notice that
$$
 {\delta_{1/t_n}^{f(a_n)}}_* X_{v_n(s)}
=
\sum_{i=1}^{m} \frac{v_n^i(s)}{t_n} \left(t_n {\delta_{1/t_n}^{f(a_n)}}_* X_{i}\right)
$$
and that $\left(t_n {\delta_{1/t_n}^{f(a_n)}}_* X_{i}\right)_{n\in \N}$ locally uniformly converges towards $\widehat{X}_i$ on $\R^d$, while the $L^\infty$-norm of $\frac{v_n}{t_n}$ is upper bounded    by 
$$
\frac{2}{t_n}\dsr\left(f(b_n),\e^{t_n X_{u(a_n)}}f(a_n)\right),
$$ 
which converges toward $0$ almost everywhere on $[0,1]$. 

Hence, by continuity of the endpoint map with respect to the control,
$
\chronexp\int_0^1 {\delta_{1/t_n}^{f(a_n)}}_* X_{v_n(s)}\diff s
$
locally uniformly converges towards the identity.
On the other hand, 
$$
\delta_{1/t_n}^{f(a_n)}\left(\e^{t_n X_{u(a_n)}}f(a_n)\right)= \e^{t_n {\delta_{1/t_n}^{f(a_n)}}_{*}X_{u(a_n)}}(0).
$$
Again by local uniform convergence of $t_n {\delta_{1/t_n}^{f(a_n)}}_{*}X_{i}$ towards $\widehat{X}_{i}$ for all $1\leq i\leq m$ and by the convergence of $(u(a_n))_{n\in \N}$ towards $u(l)$,
$$
\delta_{1/t_n}^{f(a_n)}\left(\e^{t_n X_{u(a_n)}}f(a_n)\right)\rightarrow 
\e^{
\widehat{X}_{u(l)}}(0) ,
$$
so that \eqref{E:star} implies 
$$
\delta_{1/t_n}^{f(a_n)}(f(b_n))
\xrightarrow[n \to +\infty]{} 
\e^{
\widehat{X}_{u(l)}}(0) .
$$
\end{proof}

\subsection{Whitney condition on singular sub-Riemannian manifolds}\label{SS:singular_WC}
The aim of this section is to extend what we know about the $C^1_H$-Whitney condition to the case of  sub-Riemannian manifolds with singular points. This extension stands on the following result.

\begin{proposition}\label{P:lift}
Let $( \widetilde{M},\widetilde{\Delta}, \widetilde{g})$ be an equiregular lift of $(M,\Delta,g)$ on the open set $\Omega$. Let $(X_1,\dotsc,X_m)$ be a frame of $(M,\Delta,g)$ on $\Omega$ and  $(\widetilde{X}_1,\dotsc,\widetilde{X}_m)$ be a frame of $(\widetilde{M},\widetilde{\Delta},\widetilde{g})$ on $\widetilde{\Omega}$, the lift of $(X_1,\dotsc,X_m)$.  

Let $K\subset \R$ be compact and $(f,L):K\rightarrow \Omega\times TM$ be continuous.
If $(f,L)$ satisfies $\mathcal{P}_X$ on $K$, then there exists a continuous lift $(\widetilde{f},\widetilde{L}):K\rightarrow \widetilde{\Omega}\times T\widetilde{M}$ of $(f,L)$  that satisfies $\mathcal{P}_{\widetilde{X}}$ on $K$.
\end{proposition}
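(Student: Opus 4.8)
The plan is to realise $\widetilde f$ as a uniform limit of horizontal lifts of piecewise‑flow approximations of $f$, and then to read off $\mathcal{P}_{\widetilde X}$ from the uniform estimate of Lemma~\ref{L:our_distance_estimate} on the \emph{equiregular} manifold $\widetilde M$. As a guiding remark: if $K$ were an interval, then $\mathcal{P}_X$ would force $f$ to be $C^1_H$ (the Taylor estimate gives $\dot f=X_u$ everywhere, with $u$ continuous, hence $f=\chronexp\int X_{u(\cdot)}\,\diff\cdot\,$), and one could simply take $\widetilde f(t)=\chronexp\int_{t_0}^t\widetilde X_{u(\tau)}\diff\tau(\widetilde f(t_0))$, which is $C^1_H$, projects onto $f$ because $\psi_*\widetilde X_u=X_u$, and satisfies $\mathcal{P}_{\widetilde X}$ by Proposition~\ref{P:Taylor_exp} applied on $\widetilde M$. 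The proof below adapts this to a general closed $K$; note that an arbitrary continuous lift of $f$ would \emph{not} work (the fibre directions of $\psi$ are non‑horizontal upstairs, hence ``expensive''), so the lift must be built with care.

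\emph{Construction of the lift.} Fix $u:K\to\R^m$ continuous with $L=X_u\circ f$. For each $n$ remove from $[\min K,\max K]$ the finitely many gaps of $K$ of length $\ge 1/n$; this leaves finitely many closed intervals, on each of which I put a $1/n$‑net containing its endpoints, and I let $D_n\subset K$ be the (nested in $n$) union of these nets together with $\min K,\max K$ and the endpoints of the removed gaps, so that between consecutive points $p<q$ of $D_n$ one has either $q-p\le 1/n$ or $(p,q)$ a gap of $K$. Define $f_n$ on $K$ by flowing the frame on the short intervals, $f_n|_{[p,q]}=\e^{(\cdot-p)X_{u(p)}}f_n(p)$, and by resetting $f_n$ at the right endpoint of each removed gap to the value there of a fixed local section of $\psi$ (such sections exist, $\psi$ being a submersion; this keeps $\widetilde f_n$ continuous on $K$ and consistently pinned near accumulation points of gaps). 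Lift each flow segment through the lifted frame, $\widetilde f_n|_{[p,q]}=\e^{(\cdot-p)\widetilde X_{u(p)}}\widetilde f_n(p)$, so that $\psi\circ\widetilde f_n=f_n$, $\widetilde f_n$ is continuous on $K$, and the $\widetilde f_n$ are equicontinuous and uniformly bounded (they are $\sup_K|u|$‑Lipschitz on each clopen piece of $K$, and pinned elsewhere by the sections). A Gronwall/telescoping estimate on each piece — crucially using $\omega(h)=o(h)$ to sum the per‑step errors — shows $\sup_K\dsr(f_n,f)\to 0$, hence $f_n\to f$ uniformly. By Arzel\`a--Ascoli a subsequence satisfies $\widetilde f_n\to\widetilde f$ uniformly, and $\widetilde f$ is a continuous lift of $f$. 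Put $\widetilde L:=\widetilde X_u\circ\widetilde f$; since $\psi_*\widetilde X_i=X_i$ we get $\psi_*\widetilde L=L$, so $(\widetilde f,\widetilde L)$ is a lift of $(f,L)$.

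\emph{Verification of $\mathcal{P}_{\widetilde X}$.} It suffices to obtain, for each $l\in K$, the estimate on a neighbourhood of $l$ with a modulus $\omega_l=o(\cdot)$ (for $|t-s|$ bounded below it is automatic after enlarging $\omega$, using uniform continuity of $\widetilde f$; then patch over a finite subcover of $K$). Fix $l$, set $\bar v=u(l)$, and let $\phi(\sigma):=\sup\{|u(a)-u(b)|:a,b\in K,\ |a-b|\le 2\sigma\}$, a modulus with $\phi(0)=0$, continuous at $0$. For $s<t$ near $l$ and $n$ large, $[s,t]$ meets no gap of length $\ge 1/n$, and on the $D_n$‑grid points of $[s,t]$ the map $\widetilde f_n$ reads $\chronexp\int_0^{t-s}\widetilde X_{v_n(\sigma)}\diff\sigma$ evaluated at $\widetilde f_n(s)$, where $v_n$ is the step control taking the values $u(t_k)$ of the intervening grid points; by construction $v_n(0)=u(s)$ and $|v_n(\sigma)-v_n(0)|\le\phi(\sigma)$. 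Applying Lemma~\ref{L:our_distance_estimate} on $\widetilde M$ at $\widetilde f(l)$, with the lifted frame, base point $\bar v$, the above $\phi$, and comparing the constant control $\sigma\mapsto u(s)$ with $v_n$, gives, uniformly in $n$,
$$
\dsrlift\!\left(\e^{(t-s)\widetilde X_{u(s)}}\widetilde f_n(s),\ \chronexp\int_0^{t-s}\widetilde X_{v_n(\sigma)}\diff\sigma(\widetilde f_n(s))\right)\le\omega_l(t-s),
$$
i.e.\ $\dsrlift(\e^{(t-s)\widetilde X_{u(s)}}\widetilde f_n(s),\widetilde f_n(t))\le\omega_l(t-s)+o(1)$, the $o(1)$ absorbing the (at most $1/n$) rounding of $s,t$ to grid points. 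Letting $n\to\infty$, using $\widetilde f_n\to\widetilde f$ uniformly and continuity of the flow in its arguments, yields $\dsrlift(\widetilde f(t),\e^{(t-s)\widetilde X_{u(s)}}\widetilde f(s))\le\omega_l(t-s)$, as required.

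\emph{Expected main difficulty.} The crux is to build the lift so that at small scales near every point it genuinely looks like a flow of $\widetilde X_{u(\cdot)}$ with a control staying within a \emph{fixed} modulus of $u(s)$ — this is exactly the hypothesis of Lemma~\ref{L:our_distance_estimate} and is what closes the argument — while simultaneously keeping $\widetilde f$ continuous across the possibly infinitely many, accumulating gaps of $K$ and preserving the convergence $f_n\to f$. Using \emph{pure} flow segments (rather than inserting small correction arcs, which would be the naive way to make $f_n$ interpolate $f$ exactly but would push the control far from $u(s)$ during the corrections) is essential; this is what forces the drift‑control argument resting on $\omega=o(\cdot)$ together with the reset‑at‑gaps device, and it is the part requiring the most care.
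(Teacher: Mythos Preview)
Your construction has a genuine flaw in the reset mechanism. You reset $\widetilde f_n(q)$ to a section value $\sigma(f(q))$ at the right endpoint of each gap of length $\ge 1/n$. But as $n\to\infty$, \emph{more} gaps qualify as large, not fewer: any fixed gap of $K$ contained in $(s,t)$ is eventually removed, so for $n$ large the flow identity $\widetilde f_n(t)=\chronexp\int_s^t\widetilde X_{v_n(\sigma)}\diff\sigma(\widetilde f_n(s))$ fails precisely when you need it. Your claim ``for $s<t$ near $l$ and $n$ large, $[s,t]$ meets no gap of length $\ge 1/n$'' has the quantifiers the wrong way round. Worse, the limit you obtain can simply be $\widetilde f=\sigma\circ f$, and this does \emph{not} satisfy $\mathcal P_{\widetilde X}$: take the Heisenberg lift of the Grushin plane, $\psi(x,y,z)=(x,z+xy/2)$, $\sigma(x,w)=(x,0,w)$, $f(t)=(1,t)$ with $u\equiv(0,1)$, $K=\{0\}\cup\{1/k:k\ge 1\}$. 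Here your $\widetilde f_n(1/k)$ eventually equals $\sigma(f(1/k))=(1,0,1/k)$ for every $k$, so $\widetilde f=\sigma\circ f$, and one computes $\dsrlift(\widetilde f(1/k),\e^{(1/k)\widetilde X_{u(0)}}\widetilde f(0))\asymp k^{-1/2}$, which is not $o(1/k)$.

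The tension you identify in your ``Expected main difficulty'' is real, but the resolution you chose is the wrong one. The paper does exactly what you rejected: it extends $u$ from $K$ to all of $\R$ by using $u(a)$ on most of each gap $(a,b)$ together with a short horizontal correction arc (of duration $\le\dsr(\e^{(b-a)X_{u(a)}}f(a),f(b))/M\le\omega(b-a)/M$, with bounded control) at the end, so that the chronological exponential on $M$ hits $f$ exactly on $K$. It then lifts this single control once and for all. The corrections do push the control away from $u(s)$, but only on a set of measure $o(b-a)$; the paper shows this is enough for the rescaled control to converge to $u(l)$ in $L^1$, and then verifies $\mathcal P_{\widetilde X}$ via the dilation criterion (Proposition~\ref{P:lim_dilation_backward}) rather than Lemma~\ref{L:our_distance_estimate}. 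Your worry that correction arcs ``push the control far from $u(s)$'' is unfounded at the $L^1$ level, and that is the level that matters here.
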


\begin{proof}
Let $(f,L):K\rightarrow \Omega\times TM$ be continuous and satisfying $\mathcal{P}_X$ on $K$. We construct a lift $(\widetilde{f},\widetilde{L}):K\rightarrow \widetilde{\Omega}\times T\widetilde{M}$ of $(f,L)$ by lifting a suitable absolutely continuous extension of $f$.

Let $u:\R\rightarrow \R^m$ be defined as follows. We set $\left.u\right|_{K}$ to be continuous and such that $X_{u(t)}=L(t)$ for all $t\in K$.  For every $(a,b)\subset K^c$ such that $a,b\in  K$, let $d(a,b)=\dsr\left( \e^{(b-a)X_{u(a)}}f(a),f(b)\right)$ and 
let
$$
M=2 \sup\left\{\frac{d(a,b)}{b-a}\mid(a,b)\subset K^c,a,b\in  K \right\}<\infty.
$$
Let $(a,b)\subset K^c$, $a,b\in  K$. Since $ b-\frac{d(a,b)}{M}>a$, we can define $u$ on $\R$ in the following way:
\begin{itemize}
\item if $t\in \left(a,b-\frac{d(a,b)}{M}\right)$, we set 
$$
u(t)=\frac{b-a}{b-a- d(a,b)/M}u(a),
$$

\item on $ \left[b-\frac{d(a,b)}{M},b\right)$ we   take $u$ measurable such that
$$
\chronexp\int_{b- d(a,b)/M}^b X_{u(s)}\diff s 
\left(   
\e^{(b-a)X_{u(a)}}f(a)
\right)=f(b).
$$
By definition of $M$, we can further assume that 
$$
|u(t)| \leq M \quad \text{ for all }t\in \left[b-\frac{d(a,b)}{M},b\right).
$$
\end{itemize}

On the non-compact components of $K^c$, we set $u$ to be such that $X_{u(t)}=L(b)$ if the component is of the form $(-\infty,b)$, and  $X_{u(t)}=L(a)$ if the component is of the form $(a,\infty)$.
Let us prove that for any $t_0,t\in K$, we have 
\begin{equation}\label{E:f=chronexp}
\chronexp\int_{t_0}^t  X_{u(s)}\diff s 
\left(   
 f(t_0)\right)=f(t).
\end{equation}

For all $t_0\in K$, there exists  an open neighborhood $O$ of $f(t_0)$, $\Phi:\bar{O}\rightarrow \R^d$ a smooth system of coordinates and an open interval $I\subset\R$ such that $t_0\in I$, $f(K\cap I)\subset O$ and for all $t\in I$, 
$\chronexp\int_{t_0}^t  X_{u(s)}\diff s 
\left(   
 f(t_0)\right)\in O$.
 
Let 
$$ 
f^*(t)=
\Phi \left( \chronexp\int_{t_0}^t  X_{u(s)}\diff s 
\left(    f(t_0) \right)\right).
$$
In order to prove \eqref{E:f=chronexp}, let us show that $\Phi(f(t))= f^*(t)$ for all $t\in K\cap I$.

We denote by $\|\cdot \|$ the Euclidean norm with respect to the coordinates $\Phi$. By continuity of $\Phi$, there exists $C>0$ such that 
$$
\| X_i \| \leq C,	 \qquad \forall q\in O, \forall 1\leq i\leq m
$$
and
$$
\left\|
		f(t)- \e^{(t-s)X_{u(s)}}f(s)
\right\|
\leq 
C\dsr\left(f(t),\e^{(t-s)X_{u(s)}}f(s)\right)\leq C \omega(|t-s|),
\qquad \forall t,s\in K\cap I.
$$
Hence for all $t,s\in I\cap K$,
$$
\begin{aligned}
	\left\|
		f(t)-f(s)
	\right\|
&\leq
	\left\|
		f(t)- \e^{(t-s)X_{u(s)}}f(s)
	\right\|
+	
	\left\|
		\e^{(t-s)X_{u(s)}}f(s)-f(s)
	\right\|
\\
&\leq
	C \left(\omega(|t-s|) +\|u\|_{\infty}|t-s|\right)
	\\
&\leq C'|t-s|,
\end{aligned}
$$
that is, $\Phi(f)$ is $C'$-Lipschitz continuous on $K\cap I$. Then $\Phi(f)$ admits a $C'$-Lipschitz continuous extension $\hat{f}$ on $ I$.

To show \eqref{E:f=chronexp}, we then show that $f^*$ and $\hat{f}$ coincide on $K\cap I$. Both are absolutely continuous and satisfy $f^*(t_0)=\hat{f}(t_0)$. Furthermore, the derivatives of $f^*$ and $\hat{f}$ are almost everywhere equal on $K$, and  for all $(a,b)\in K^c$, $a,b\in K$,
$$
\int_a^b{ f^*}'(s)\diff s = \Phi(f(b))-\Phi(f(a))=\hat{f}(b)-\hat{f}(a),
$$
since, by construction of $u$,
$$
\chronexp\int_{a}^b  X_{u(s)}\diff s 
\left(   
 f(a)\right)=\chronexp\int_{b-d(a,b)/M}^b X_{u(s)}\diff s 
\left(   
\e^{
(b-a) X_{u(a)}
}
\left(   
 f(a)\right)
\right)=f(b).
$$
Hence for all $t\in K\cap I$,
$$
f^*(t)=\int_{t_0}^t {f^*}'(s)\diff s=\int_{t_0}^t \hat{f}'(s)\diff s=\hat{f}(t).
$$

 Let $t_0\in K$ and let
$\widetilde{f}(t_0)$ be such that $\psi \left( \widetilde{f}(t_0) \right)= f(t_0)$, where $\psi$ is as in
Definition~\ref{D:lift}. We define the curve
$$
\gamma:\R\rightarrow \widetilde{M}
$$
such that for all $t\in\R$, 
$$
\gamma(t)=\chronexp\int_{t_0}^t \widetilde{X}_{u(s)}\diff s 
\left(   
\widetilde{f}(t_0)
\right).
$$
Then for all $t\in K$, we set $\widetilde{f}(t)=\gamma(t)$ and $\widetilde{L}(t)=\dot{\gamma}(t)=\widetilde{X}_{u(t)}(\gamma(t))$.

We claim that $(\widetilde{f},\widetilde{L})$ is a lift of $(f,L)$. 
By construction of $u$ and $\gamma$,
$$
\psi(\gamma(t))=\chronexp\int_{t_0}^t \psi_{*}\widetilde{X}_{u(s)}\diff s 
\left(   
\psi(\widetilde{f}(t_0))
\right)=\chronexp\int_{t_0}^t  X_{u(s)}\diff s 
\left(   
 f(t_0)\right)=f(t)
$$
for all $t\in K$ and, since $\dot{\gamma}(t)=\widetilde{X}_{u(t)}$, we have $\psi_{*}\widetilde{L}(t)=X_{u(t)}=L(t)$.

Let us now prove that such a lift satisfies the  Whitney condition. To alleviate notations in the following, we set $g=\widetilde{f}$ and $Y_i=\widetilde{X}_i$, $1\leq i\leq m$. Moreover, up to restricting $\widetilde{\Omega}$, we assume that there exists a continuously varying system of privileged coordinates 
$$
\Phi:
 (p,q) \longmapsto  \Phi_p(q)\in  \R^d,
$$ 
on  $\widetilde{\Omega}$.
As a consequence of Proposition~\ref{P:Wg<=>Wd} it is enough to show that
 for all $l\in K$, for all sequences $(a_n)_{n\in \N}$, $(b_n)_{n\in \N}$ in $K$ such that $a_n<b_n$ and $a_n,b_n\rightarrow l\in K$, we have 
\begin{equation}\label{E:backward_dilation_sufficient}
\lim_{n\to \infty} \delta^{ g(a_n)}_{\frac{1}{b_n-a_n}}(g(b_n))= \e^{\widehat{ Y}_{u(l)}}(0).
\end{equation}
For any interval  $(a,b)$ in  $\R$, by construction
$$
g(b)=\chronexp\int_{a}^b Y_{u(s)}\diff s(g(a)),
$$
thus, by reparametrizing,
$$
\delta^{ g(a)}_{\frac{1}{b-a}}(g(b))
=
\chronexp\int_{a}^b{\delta^{ g(a)}_{\frac{1}{b-a}}}_* Y_{u(s)}\diff s(0)
=
\chronexp\int_{0}^1(b-a){\delta^{ g(a)}_{\frac{1}{b-a}}}_* Y_{u(a+t(b-a))}\diff t(0).
$$
For any sequences
$(a_n)_{n\in \N}$, $(b_n)_{n\in \N}$ in $K$ such that $a_n<b_n$ and $a_n,b_n\rightarrow l\in K$, 
$
(b_n-a_n){\delta^{ g(b_n)}_{\frac{1}{b_n-an}}}_* Y_{i}\rightarrow \widehat{Y}_{i}
$ locally uniformly on $\R^d$, for all $1\leq i\leq m$. Hence to prove \eqref{E:backward_dilation_sufficient}, we now show that for $v_n(t)=u(a_n+t (b_n-a_n))$, $t\in [0,1]$,
$$
v_n\xrightarrow[n \to +\infty]{L^1((0,1),\R^m)}  u(l).
$$

For all $n\in \N$, let $K_n=(b_n-a_n)^{-1}(K-a_n)\cap[0,1] $. By uniform continuity of $\left.u\right|_K$ on compact subsets of $K$, $(\left.v_n\right|_{K_n})_{n\in\N} $ uniformly converges to $u(l)$. Regarding $(\left.v_n\right|_{K_n^c})_{n\in\N} $, as a first step, 
  let us compare $u$ to $u(l)$ on an interval $(a,b)\subset K^c$, $a,b\in K$.
  
For $t\in \left(a,b-d(a,b)/M\right)$, we have set $u(t)=\frac{b-a}{b-a-d(a,b)/M}u(a)$,
and 
 for $t\in \left[b-d(a,b)/M,b\right)$, we have imposed $|u(t)| \leq M$. 
Then
\begin{equation}\label{E:bound_connected_component}
\begin{aligned}
\int_a^b
|u(s)-u(l)|\diff s
&\leq
\left(
b-a-\frac{d(a,b)}{M}
\right)
\left|
	u(a)\frac{b-a}{b-a-d(a,b)/M}-u(l)
\right|
+
\frac{d(a,b)}{M}(M+|u(l)|),
\\
&
\leq
(b-a)
\left|u(a)-u(l)\right|+\omega(b-a)\left(1+\frac{2|u(l)|}{M }\right),
\end{aligned}
\end{equation}
where we used $\mathcal{P}_X$ for the inequality   $d\left( a,b\right)\leq \omega\left( b-a\right)$.

Since $K$ is a closed subset of $\R$, $K^c $ is a countable union of open intervals, notably  for all $n\in \N$ there exist $I(n)\subset \N$ and two countable (or finite) families of reals  $(c_n^k)_{k\in I(n)}$ and $(d_n^k)_{k\in I(n)}$  such that 
$$
K^c\cap (a_n,b_n) =\bigcup_{k\in I(n)} (c_n^k,d_n^k).
$$
Then 
$$
K_n^c\cap (0,1) =\bigcup_{k\in I(n)} \left(\frac{c_n^k-a_n}{b_n-a_n},\frac{d_n^k-a_n}{b_n-a_n}\right),
$$

and for all $n\in \N$, $k\in I(n)$, $( c_n^k, d_n^k)$ is a connected component of $K^c$,  hence a bound of type \eqref{E:bound_connected_component} holds.
Thus
$$
\begin{aligned}
\int_{\frac{c_n^k-a_n}{b_n-a_n}}^{\frac{d_n^k-a_n}{b_n-a_n}}
|v_n(t)-u(l)|\diff t
&=
\frac{1}{b_n-a_n}\int_{ c_n^k}^{ d_n^k}
|u(s)-u(l)|\diff s
\\
&\leq
\frac{(c_n^k-d_n^k)}{b_n-a_n}
\left|
u( c_n^k )-u(l)
\right|
 +\frac{\omega(d_n^k-c_n^k)}{(b_n-a_n)}\left(1+\frac{2|u(l)|}{M }\right),
\end{aligned}
$$
and 
$$
\begin{aligned}
\int_{K_n^c}|v_n(t)-u(l)|\diff t&=
\sum_{k\in I(n)} 
\int_{\frac{c_n^k-a_n}{b_n-a_n}}^{\frac{d_n^k-a_n}{b_n-a_n}}
|v_n(t)-u(l)|\diff t
\\&
\leq
\| \left.v_n\right|_{K_n}-u(l)\|_{\infty}
+
\left(1+\frac{2|u(l)|}{M}\right) \sum_{k\in I(n)} \frac{\omega \left(d_n^k-c_n^k\right) }{b_n-a_n}.
\end{aligned}
$$

As shown previously, $\| \left.v_n\right|_{K_n}-u(l)\|_{\infty}\rightarrow 0$. Regarding
$\sum_{k\in I(n)}\frac{\omega(d_n^k-c_n^k)}{b_n-a_n} $, recall that $\omega(t)=t\phi(t)$ with $\phi(t)\rightarrow 0$ as $t\rightarrow0^+$. Then
$$
 \frac{1}{b_n-a_n}\sum_{k\in I(n)}  \omega(d_n^k-c_n^k)
 <
 \frac{1}{b_n-a_n}  \sum_{k\in I(n)}  (d_n^k-c_n^k)\phi(d_n^k-c_n^k)
  <
  \sup_{[0,b_n-a_n]}\phi.
$$
In other terms,
$$
\sum_{k\in I(n)} \frac{\omega (d_n^k-c_n^k)}{b_n-a_n}\xrightarrow[n \to +\infty]{}0,
$$
and
$$
\int_0^1|v_n(t)-u(l)|\diff t=\int_{K_n\cap (0,1)}|v_n(t)-u(l)|\diff t+\int_{K_n^c\cap (0,1)}|v_n(t)-u(l)|\diff t\xrightarrow[n \to +\infty]{}0.
$$
\end{proof}

\begin{corollary}
Let $(X_1,\dots,X_m)$ and $(Y_1,\dots,Y_m)$ be two frames of the singular sub-Riemannian structure $(M,\Delta,g)$ on the open subset $\Omega\subset M$. Let $K\subset \R$ be compact and $(f,L):K\rightarrow \Omega\times TM$ be continuous. Then, as in the equiregular case, $(f,L)$ satisfies $\mathcal{P}_X$ if and only if $(f,L)$ on $K$ satisfies $\mathcal{P}_Y$ on $K$.
\end{corollary}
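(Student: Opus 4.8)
The plan is to reduce to a single implication by symmetry, then prove it locally by transporting the problem to an equiregular desingularization — where Proposition~\ref{P:real_Px<->Py} is available — and projecting back down, and finally to reassemble the local estimates by a compactness argument. Concretely: by the symmetry of the roles of $X$ and $Y$ it is enough to show that $\mathcal{P}_X$ on $K$ implies $\mathcal{P}_Y$ on $K$. Fix a continuous $u:K\to\R^m$ with $L=X_u\circ f$. Since any two frames of the structure come from Euclidean frames of $U$ differing by a smooth $\mathrm{O}(m)$-valued map $c$ on $\Omega$, so that $X_i=\sum_j c_{ij}Y_j$, the map $v:K\to\R^m$, $v_j(t):=\sum_i c_{ij}(f(t))\,u_i(t)$, is a \emph{global} continuous selection with $L=Y_v\circ f$. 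This $v$ is the one that will witness $\mathcal{P}_Y$; only a suitable modulus remains to be produced, and keeping this single global $v$ throughout is the point one has to be careful about.

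Here is the local step. Fix $t_0\in K$. By \cite[Lemma~2.5]{jean2014control} pick a neighborhood $\Omega_{t_0}\subset\Omega$ of $f(t_0)$ carrying an equiregular desingularization $\psi\colon\widetilde\Omega_{t_0}\to\Omega_{t_0}$, and let $\widetilde X$, $\widetilde Y$ be the lifted frames of $\widetilde\Delta$ on $\widetilde\Omega_{t_0}$; since both lifts are built from the same fiberwise isometry, the change of frame lifts as $\widetilde X_i=\sum_j(c_{ij}\circ\psi)\widetilde Y_j$. Shrink to a compact neighborhood $K'$ of $t_0$ in $K$ with $f(K')\subset\Omega_{t_0}$; then $(f|_{K'},L|_{K'})$ still satisfies $\mathcal{P}_X$ on $K'$ with the frame $X$ restricted to $\Omega_{t_0}$. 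By Proposition~\ref{P:lift} there is a continuous lift $(\widetilde f,\widetilde L)\colon K'\to\widetilde\Omega_{t_0}\times T\widetilde M$ of $(f|_{K'},L|_{K'})$ satisfying $\mathcal{P}_{\widetilde X}$ on $K'$; in particular $\psi\circ\widetilde f=f$ and $\psi_*\widetilde L=L$ on $K'$. Combining $\widetilde L=\widetilde X_u\circ\widetilde f$ with the lifted change of frame and with $\psi\circ\widetilde f=f$ gives $\widetilde L=\widetilde Y_v\circ\widetilde f$ on $K'$, for the \emph{same} $v$ as above. As $\widetilde M$ is equiregular, Proposition~\ref{P:real_Px<->Py} — run with the continuous selection $v|_{K'}$, cf.\ Remark~\ref{continuous-selection} — yields a modulus $\omega_{t_0}$, with $\omega_{t_0}(\rho)=o(\rho)$ at $0^+$, such that $\dsrlift\bigl(\widetilde f(t),\e^{(t-s)\widetilde Y_{v(s)}}\widetilde f(s)\bigr)\le\omega_{t_0}(|t-s|)$ for all $s,t\in K'$. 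Applying $\psi$ — which intertwines $\e^{\tau\widetilde Y_{v(s)}}$ with $\e^{\tau Y_{v(s)}}$ because $\psi_*\widetilde Y_j=Y_j$ — together with $\psi\circ\widetilde f=f$ and the projection inequality~\eqref{E:proj_lift_dist}, we get $\dsr\bigl(f(t),\e^{(t-s)Y_{v(s)}}f(s)\bigr)\le\omega_{t_0}(|t-s|)$ for all $s,t\in K'$. Thus $(f|_{K'},L|_{K'})$ satisfies $\mathcal{P}_Y$ on $K'$, with the selection $v|_{K'}$.

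To conclude, cover the compact metric space $K$ by the interiors (relative to $K$) of finitely many such neighborhoods $K'_{t_1},\dots,K'_{t_N}$, and let $\delta>0$ be a Lebesgue number of this cover, so that any $s,t\in K$ with $|t-s|<\delta$ lie in a common $K'_{t_i}$. Putting $\omega(\rho)=\max_i\omega_{t_i}(\rho)$ for $\rho<\delta$ and $\omega(\rho)=C$ for $\rho\ge\delta$, where $C:=\sup_{s,t\in K}\dsr(f(t),\e^{(t-s)Y_{v(s)}}f(s))$ is finite by continuity of $(s,t)\mapsto\dsr(f(t),\e^{(t-s)Y_{v(s)}}f(s))$ and compactness of $K\times K$, we obtain $\omega(\rho)=o(\rho)$ at $0^+$ and $\dsr(f(t),\e^{(t-s)Y_{v(s)}}f(s))\le\omega(|t-s|)$ for all $s,t\in K$. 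Hence $(f,L)$ satisfies $\mathcal{P}_Y$ on $K$, with the global continuous selection $v$.

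I expect the analytic content to be light: Proposition~\ref{P:lift} already packages the one genuinely delicate part — producing an absolutely continuous lift of $(f,L)$ that still carries the Whitney estimate — and the metric-decreasing property of the projection makes the descent back to $M$ harmless. The step requiring real care is the bookkeeping with continuous selections: one must follow the \emph{same} selection $v$ through the lift (which is why the lifted change of frame $\widetilde X_i=\sum_j(c_{ij}\circ\psi)\widetilde Y_j$ and the identity $\psi\circ\widetilde f=f$ are needed), so that all the local versions of $\mathcal{P}_Y$ are phrased with one and the same global continuous selection and can therefore be glued into a single modulus over all of $K$.
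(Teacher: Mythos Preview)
Your proof is correct and follows essentially the same strategy as the paper: lift $(f,L)$ to an equiregular desingularization via Proposition~\ref{P:lift}, apply the equiregular frame-independence result (Proposition~\ref{P:real_Px<->Py}) upstairs, and project back using~\eqref{E:proj_lift_dist}. The paper handles localization with a brisk ``without loss of generality, there exists an equiregular lift on $\Omega$'', whereas you work genuinely locally at each $t_0\in K$ and glue by a Lebesgue-number argument; your version is more explicit, and your care in tracking a single global continuous selection $v$ through all the local lifts (so that the local $\mathcal{P}_Y$-moduli refer to the same $v$ and can be merged) is exactly the bookkeeping the paper's global lift sidesteps.
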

\begin{proof}
Let $c:\Omega\rightarrow \mathrm{O}(m)$  be a smooth map onto the orthogonal group such that 
$$
Y_i =\sum_{j=1} ^m c_{ij} X_j , \quad 1\leq i\leq m.
$$
(See Definition~\ref{D:frame}.) Without loss of generality, there exists an equiregular lift $(\widetilde{M},\widetilde{X}_1,\dots,\widetilde{X}_m)$  of the sub-Riemannian structure $(M,X_1,\dots ,X_m)$ on $\Omega$. We denote by $\psi:\widetilde{M}\rightarrow M$ the associated submersion.
Then let us define for all $q\in\widetilde{\Omega}$
$$
\widetilde{Y}_i(q)=\sum_{j=1} ^m c_{ij}(\psi(q)) \widetilde{X}_j(q), \quad 1\leq i\leq m.
$$
Then $(\widetilde{Y}_1,\dots,\widetilde{Y}_m)$
is a smooth frame for the sub-Riemannian manifold $(\widetilde{M},\widetilde{\Delta},\widetilde{g})$, and 
$$
\psi_{*}\widetilde{Y}_i (\psi(q))=\sum_{j=1} ^m c_{ij}(\psi(q)) \psi_*\widetilde{X}_j(\psi(q))=Y_i, \quad 1\leq i\leq m.
$$

Let $(f,L)$ be a curve in $\Omega$ satisfying $\mathcal{P}_X$ on $K$. By applying Proposition \ref{P:lift}, it can be lifted to a curve $(\widetilde{f},\widetilde{L})$ satisfying $\mathcal{P}_{\widetilde{X}}$ on $K$. 
Since the structure is regular on $\widetilde{\Omega}$, we have that $(\widetilde{f},\widetilde{L})$ satisfies $\mathcal{P}_{\widetilde{Y}}$ on $K$ by Proposition~\ref{P:real_Px<->Py}.
We conclude by noticing that $(f,L)$ must then satisfy $\mathcal{P}_Y$ on $K$. 
This is a direct consequence of the relation 
$$
\dsr
\left(
\widetilde{f}(t) , \e^{(t-s)  \widetilde{Y}_u}  \widetilde{f}(s)
\right)
\geq
\dsr
\left(
\psi\left(\widetilde{f}(t)\right),\psi\left(\e^{(t-s)\widetilde{Y}_u}\widetilde{f}(s)	\right)
\right)
=
\dsr
\left(
f(t) , \e^{(t-s)  Y_u}  f(s)  
\right),
$$
which holds for every $u\in\R^m$, 
as consequence of \eqref{E:proj_lift_dist}.
\end{proof}

As a consequence, Definitions~\ref{D:C1H_Whitney_cond} and \ref{D:C1H_extension_prop} for $\mathcal{C}^1_H$-Whitney condition and extension property are independent of the choice of the frame, and we have the following immediate corollary.

\begin{corollary}\label{C:projection_W_T}
Let $(M,\Delta,g)$ be a possibly singular sub-Riemannian manifold and let $(\widetilde{M},\widetilde{\Delta},\widetilde{g})$ be an equiregular lift of $(M,\Delta,g)$. If $(\widetilde{M},\widetilde{\Delta},\widetilde{g})$ has the $C^1_H$ extension property, then so does $(M,\Delta,g)$.
\end{corollary}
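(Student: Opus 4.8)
The plan is to carry the problem to the equiregular lift $(\widetilde{M},\widetilde{\Delta},\widetilde{g})$, solve it there using the assumed extension property, and project the solution back to $M$ through $\psi$. Let $K\subseteq\R$ be closed and $(f,L)\colon K\to M\times TM$ be continuous and satisfy the $C^1_H$-Whitney condition; we want a $C^1_H$ curve $\gamma\colon\R\to M$ with $\gamma|_K=f$ and $\dot\gamma|_K=L$. By Definition~\ref{D:C1H_Whitney_cond} and the second countability of $\R$, there are countably many compact sets $K_j$ covering $K$, each a neighborhood in $K$ of one of its points, together with an open set $\Omega_j\subseteq M$ and a frame $X^{(j)}$ of $\Delta$ on $\Omega_j$ such that $(f|_{K_j},L|_{K_j})$ satisfies $\mathcal{P}_{X^{(j)}}$ on $K_j$; in particular $f(K_j)\subseteq\Omega_j$, so over $\widetilde{\Omega}_j=\psi^{-1}(\Omega_j)$ the lifted frame $\widetilde{X}^{(j)}$ of Definition~\ref{D:lift} is available and Proposition~\ref{P:lift} applies on each piece.

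The main step is to build a single continuous lift $(\widetilde{f},\widetilde{L})\colon K\to\widetilde{M}\times T\widetilde{M}$ of $(f,L)$, i.e.\ $\psi\circ\widetilde{f}=f$ and $\psi_*\widetilde{L}=L$, still satisfying the $C^1_H$-Whitney condition, now in $\widetilde{M}$. First I would replace the frame-dependent controls by a single \emph{intrinsic} continuous section $\bar u\colon K\to U$ of the Euclidean bundle with $f\circ\bar u=L$ and $\pi_U\circ\bar u=f$, obtained by averaging the frame-wise continuous controls with a partition of unity subordinate to $\{K_j\}$, which is legitimate because $\{w\in U_{f(t)}\colon f(w)=L(t)\}$ is convex; lifting $\bar u$ fiberwise through the isometry $\phi$ and applying the bundle morphism of $\widetilde{M}$ then gives a continuous $\widetilde{L}\colon K\to T\widetilde{M}$ valued in $\widetilde{\Delta}$ and projecting to $L$. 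For $\widetilde{f}$, I would extend $f$ to an absolutely continuous horizontal curve on $\R$ agreeing with $f$ on $K$ exactly as in the proof of Proposition~\ref{P:lift}, extend $\bar u$ to a bounded measurable control realizing it, and lift the resulting horizontal curve through $\widetilde{\Delta}$ from a single point of $\psi^{-1}(f(t_0))$; restricting to $K$ produces the desired continuous $\widetilde{f}$. Because the lift is driven by the \emph{one} intrinsic control $\bar u$, the proof of Proposition~\ref{P:lift}, read on each $K_j$ with the $X^{(j)}$-representation of $\bar u$, shows that $(\widetilde{f}|_{K_j},\widetilde{L}|_{K_j})$ satisfies $\mathcal{P}_{\widetilde{X}^{(j)}}$ on $K_j$, so $(\widetilde{f},\widetilde{L})$ satisfies the $C^1_H$-Whitney condition on $K$ in $\widetilde{M}$. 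Now the hypothesis on $\widetilde{M}$ gives a $C^1_H$ curve $\widetilde{\gamma}\colon\R\to\widetilde{M}$ with $\widetilde{\gamma}|_K=\widetilde{f}$ and $\dot{\widetilde{\gamma}}|_K=\widetilde{L}$; setting $\gamma=\psi\circ\widetilde{\gamma}$, if $t\mapsto\widetilde{u}(t)\in\widetilde{U}$ is a continuous control for $\widetilde{\gamma}$ then $t\mapsto\phi(\widetilde{u}(t))\in U$ is a continuous control for $\gamma$ (by commutativity of the square in Definition~\ref{D:lift} together with $\pi_U\circ\phi=\psi\circ\widetilde{\pi}_U$), so $\gamma$ is a $C^1_H$ curve with $\gamma|_K=\psi\circ\widetilde{f}=f$ and $\dot\gamma|_K=\psi_*\dot{\widetilde{\gamma}}|_K=\psi_*\widetilde{L}=L$, which proves the claim.

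The analytic content — that a pair satisfying $\mathcal{P}_X$ lifts, piece by piece, to one satisfying $\mathcal{P}_{\widetilde{X}}$ — is already in Proposition~\ref{P:lift}, so the genuine work of this corollary is the globalization: $K$ need not be compact, $M$ need not carry a global frame, and the local lifts are built from different frames over overlapping parts of $K$. This is where I expect the only real difficulty. Using the single intrinsic section $\bar u$ rather than frame-dependent $\R^m$-valued controls is precisely what makes the local lifts cohere — it turns the consistency on overlaps into uniqueness for one time-dependent differential equation on $\widetilde{M}$ — and it reduces the verification of the Whitney condition upstairs to invoking Proposition~\ref{P:lift} on each piece of the cover; the continuity of $\widetilde{f}$ at a point of $K$ is automatic because the covering sets $K_j$ are neighborhoods in $K$. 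The projection $\gamma=\psi\circ\widetilde{\gamma}$ and the verification that $\gamma$ is $C^1_H$ are then routine consequences of Definition~\ref{D:lift}.
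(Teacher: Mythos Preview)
Your argument is correct and follows exactly the route the paper has in mind: lift the Whitney datum to $\widetilde{M}$ via Proposition~\ref{P:lift}, apply the extension property there, and push down through $\psi$. The paper states the result as an ``immediate corollary'' and gives no proof; what you have written is precisely the justification the paper omits. Your use of a single intrinsic section $\bar u\colon K\to U$ to make the local lifts coming from different frames cohere is the right way to handle the globalization (note that the gap-filling construction in Proposition~\ref{P:lift} can itself be phrased in terms of $\bar u$, since frames are orthonormal and the bounds there are Euclidean in $U$), and the reduction to compact $K$ is the same ``without loss of generality'' step the paper invokes in the proof of Theorem~\ref{P:Strongly_pliable=>WT}.
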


\section{A sufficient condition for the $C^1_H$ extension property}\label{S:Strong_pliability}
\subsection{Strong pliability}\label{SS:def_strong_pliability}

\begin{definition}[Strong pliability]\label{D:strong_pliability}
Let $(q, u)\in M\times\R^m$ and 
let $\mathbb{G}=(\R^d,(\widehat{X}_1,\dots,\widehat{X}_m))$ be a nilpotent approximation of $(M,\Delta,g)$ at $q$. Define the space $\mathcal{C}_0=\{v\in C^0([0,1],\R^m) \mid v(0)= 0\}$ and  the map 
$$
\begin{array}{rccc}
\mathcal{F}^{ u}:
&
\mathcal{C}_0
&
\longrightarrow
&
\mathbb{G}\times \R^m
\\
&
v
&
\longmapsto
&
\left(\chronexp\int_0^1 \widehat{X}_{u+v(s)}\mathrm{d}s(0_{\mathbb{G}}),
v(1)\right).
\end{array}
$$

The pair $(q,u)$ is said to be \emph{strongly pliable} if 
for all $\eta>0$ there exists $v\in \mathcal{C}_0$ such that $\|v\|_\infty<\eta$, $\mathcal{F}^u(v)=\mathcal{F}^u(0)$ and $\mathcal{F}^u$ is a submersion at $v$.
\end{definition}
A pair $(q,u)\in M\times \R^m$ is strongly pliable in particular when $\mathcal{F}^u$ is submersion at $0$. 
This definition relates to what has been called pliability of the vector $\widehat{X}_u$ in \cite{juillet_sigalotti}, \textit{i.e.}, the property that  $\mathcal{F}^u$ is locally open at $0$. Naturally,  if $(q,u)$ is strongly pliable then $\widehat{X}_u$ is pliable.
Recall also that if $(q,u)$ is pliable then the curve $[0,1]\ni t\mapsto \e^{tX_u}$ cannot be rigid in the sense of \cite{bryant_hsu_rigid}.

\subsubsection{Regular points of the endpoint map}
\label{SSS:regular}

For every $v\in \mathcal{C}_0$, set $F(v)=\chronexp \int_0^1\widehat{X}_{u+v(s)}\diff s (0_\mathbb{G})$ and $j(v)=v(1)$ so that $\mathcal{F}^u(v)=(F(v),j(v))$. We have $D_vj=j$ by linearity, so that $D_v\mathcal{F}^u=(D_v F , j)$.
With $P_t=\chronexp \int_0^t  \widehat{X}_{u+v(s)}\diff s$, $0\leq t \leq 1$, we have 
$
D_vF(w)
= \int_0^1
{P_1}_*{P_s^{-1}}_{*}
 \widehat{X}_{w(s)}
 \diff s(0_\mathbb{G})
$.
We use this characterization to evaluate the corank of $\mathcal{F}^u$ at $v$.

Let     $(\lambda,\mu)\in {\R^{d+m}}^*$. If $(\lambda,\mu)\in {\mathrm{Im}D_v\mathcal{F}^u}^{\perp} $ then
$$
\lambda
 \cdot
\int_0^1
{P_1}_*{P_s^{-1}}_{*}
 \widehat{X}_{w(s)}
 \diff s(0_\mathbb{G})
 +\mu \cdot w(1)=0 \qquad \forall w\in \mathcal{C}_0.
$$
Let us rewrite 
$$
\lambda
 \cdot
\int_0^1
{P_1}_*{P_s^{-1}}_{*}
 \widehat{X}_{w(s)}
 \diff s(0_\mathbb{G})
=
\sum_{i=1}^m\int_0^1w_i(s) \psi_i(s)\diff s=\langle w, \psi \rangle_{L^2((0,1),\R^m)}
$$
with $\psi_i(s)=\lambda \cdot {P_1}_*{P_s^{-1}}_{*}
 \widehat{X}_{i}$. Then for all $w\in \langle\psi \rangle^{\perp}$, the codimension $0$ or $1$ subspace of $\mathcal{C}_0$ orthogonal to $\psi$, we have that 
$$
(\lambda,\mu)\cdot D_v\mathcal{F}^u(w)=\mu \cdot w(1)=0.
$$
Having $\mu \cdot w(1)=0$ for all $w\in \langle\psi \rangle^{\perp}$ implies that $\mu=0$, since
$\left\{ w(1)\mid w\in \langle\psi \rangle^{\perp}\right\}=\R^m$.
Hence elements of $ {\mathrm{Im}D_v\mathcal{F}^u}^{\perp}$ are of the form $(\lambda,0)$ with  $\lambda\in {\mathrm{Im}D_vF}^{\perp}$, and regular values of $\mathcal{F}^u$ need only be regular values of $F$.

To study the regularity points of $F$, we introduce a more classical endpoint map, that is, the extension of $F$ to $L^{\infty}([0,1],\R^m)$:
$$
\begin{array}{rccc}
G:
&
L^{\infty}([0,1],\R^m)
&
\longrightarrow
&
\mathbb{G} 
\\
&
v
&
\longmapsto
&
\chronexp\int_0^1 \widehat{X}_{u+v(s)}\mathrm{d}s(0_{\mathbb{G}}). 
\end{array}
$$ 

\begin{lemma}\label{L:restriction}
The pair $(q,u)$ is strongly pliable if and only if for all $\eta>0$ there exists $v\in L^{\infty}([0,1],\R^m)$ such that $\|v\|_{L^\infty}<\eta$, $G(v)=G(0)$ and $G$ is a submersion at $v$.
\end{lemma}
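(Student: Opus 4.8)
The plan is to prove the two implications separately: the forward one is immediate from the corank computation preceding the statement, while the backward one is the substantial part. For the forward implication, suppose $(q,u)$ is strongly pliable and fix $\eta>0$; pick $v\in\mathcal{C}_0$ with $\|v\|_\infty<\eta$, $\mathcal{F}^u(v)=\mathcal{F}^u(0)$, and $\mathcal{F}^u$ a submersion at $v$. Since $\mathcal{F}^u=(F,j)$ with $j(v)=v(1)$ and $F=G|_{\mathcal{C}_0}$, the equality $\mathcal{F}^u(v)=\mathcal{F}^u(0)$ gives in particular $G(v)=G(0)$; and the computation above shows that $\mathcal{F}^u$ is a submersion at $v$ exactly when $F$ is, i.e. exactly when $D_vG|_{\mathcal{C}_0}$ is onto $\R^d$, which a fortiori makes $D_vG:L^\infty([0,1],\R^m)\to\R^d$ onto. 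As $v\in\mathcal{C}_0\subset L^\infty([0,1],\R^m)$ and $\|v\|_{L^\infty}=\|v\|_\infty<\eta$, this $v$ witnesses the right-hand side.

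For the backward implication, fix $\eta>0$ and, applying the hypothesis with $\eta/2$, choose $w\in L^\infty([0,1],\R^m)$ with $\|w\|_{L^\infty}<\eta/2$, $G(w)=G(0)$, and $D_wG$ onto $\R^d$. Set $\mathcal{C}_{00}=\{v\in C^0([0,1],\R^m)\mid v(0)=v(1)=0\}$. I would produce $v\in\mathcal{C}_{00}$ with $\|v\|_\infty<\eta$, $G(v)=G(0)$, and $D_vG|_{\mathcal{C}_{00}}$ onto $\R^d$; then $v\in\mathcal{C}_0$, $\mathcal{F}^u(v)=(F(v),0)=\mathcal{F}^u(0)$, and since $D_vF=D_vG|_{\mathcal{C}_0}$ is onto (it already is on the smaller domain $\mathcal{C}_{00}$) the corank computation above makes $\mathcal{F}^u$ a submersion at $v$, so $(q,u)$ is strongly pliable. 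The construction uses two standard properties of the endpoint map $G$ built from the polynomial vector fields $\widehat X_i$. First, on $\{\|v\|_{L^\infty}\le\eta\}$ the flows $P^v_t=\chronexp\int_0^t\widehat X_{u+v(s)}\diff s$ and their variational equations remain in a fixed compact set and depend continuously on $v$ for the $L^1$ distance, so that $v\mapsto G(v)$ and $v\mapsto D_vG$ (as an operator into $\R^d$) are $L^1$-continuous there; in particular surjectivity of $D_vG$, being an open condition on a bounded operator with finite-dimensional target, persists — with a uniformly bounded right inverse — for $v$ in a fixed $L^1$-neighbourhood of $w$. Second, $D_vG$ extends to a bounded operator $L^1([0,1],\R^m)\to\R^d$ whose integral kernel $s\mapsto\bigl({P^v_1}_*{(P^v_s)^{-1}}_*\widehat X_i\bigr)(0_\mathbb{G})$ is continuous; since $\mathcal{C}_{00}$ is dense in $L^1$ and the target is finite dimensional, $D_vG|_{\mathcal{C}_{00}}$ has the same image as $D_vG$ and hence, by the open mapping theorem, is onto $\R^d$ with a right inverse bounded uniformly for $v$ near $w$.

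With these facts I would first regularize $w$: truncating it to $0$ on $[0,\varepsilon]\cup[1-\varepsilon,1]$ and then convolving with a mollifier of width $\ll\varepsilon$ yields, for $\varepsilon$ small, a function $w_1\in\mathcal{C}_{00}$ with $\|w_1\|_\infty\le\|w\|_{L^\infty}<\eta/2$, with $|G(w_1)-G(0)|$ as small as desired, and with $D_{w_1}G|_{\mathcal{C}_{00}}$ onto $\R^d$, the relevant constants being controlled uniformly in $\varepsilon$ by the two facts above. Since $G$ is smooth on $L^\infty$, the restriction $F|_{\mathcal{C}_{00}}$ is $C^1$ with $D(F|_{\mathcal{C}_{00}})$ uniformly Lipschitz near $w_1$ in the sup norm, and onto at $w_1$ with uniformly bounded right inverse. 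The quantitative surjective implicit function theorem (Lyusternik--Graves) then provides $v\in\mathcal{C}_{00}$ with $F(v)=G(0)$ and $\|v-w_1\|_\infty\le C\,|G(0)-G(w_1)|$, with $C$ independent of $\varepsilon$; taking $\varepsilon$ small makes $\|v-w_1\|_\infty$ as small as we wish, so that $\|v\|_\infty<\eta$ and, by the persistence of surjectivity, $D_vG|_{\mathcal{C}_{00}}$ is still onto. This $v$ is the required witness.

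The only genuine obstacle is the backward implication: one must pass from a merely $L^\infty$ control to a continuous one vanishing at both endpoints while preserving the exact constraint $G(v)=G(0)$. The regularization breaks that constraint, and restoring it forces one through a quantitative submersion theorem, hence through uniform control of the right inverse of $D(F|_{\mathcal{C}_{00}})$ and of its Lipschitz constant on a neighbourhood of $w$ — all routine consequences of the smoothness of the endpoint map and of the continuous dependence of the flows on the control in $L^1$, but to be assembled with some care.
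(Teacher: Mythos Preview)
Your argument is correct, and the forward direction matches the paper's exactly. For the backward direction you take a genuinely different route from the paper. The paper's proof packages everything into a single black box: it invokes an appendix lemma (Lemma~\ref{lem:top-deg}) which, via a finite-dimensional reduction and a topological-degree argument, produces a $w\in\mathcal{C}_0$ close to the given $v$ in the $L^2$ topology with $F(w)=G(0)$; the $L^\infty$ bound on $w$ and the submersion property at $w$ are then read off from the explicit form $w=v_n+\sum x_i^n\phi_i^n$ of the construction together with the $L^2$-continuity of $DG$. Your approach instead regularizes $v$ explicitly (truncate-and-mollify into $\mathcal{C}_{00}$, keeping the $L^\infty$ norm under control but breaking the constraint $G=G(0)$) and then repairs the constraint by a quantitative surjection theorem of Lyusternik--Graves type, tracking uniform bounds on the right inverse of $D(G|_{\mathcal{C}_{00}})$ along the way. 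Working in $\mathcal{C}_{00}$ is a nice touch that handles the $j$-component for free. The paper's route is shorter once the appendix lemma is available (and that lemma is reused elsewhere), while yours is more self-contained and avoids degree theory at the cost of the extra bookkeeping you flag in the last paragraph; both rest on the same underlying facts---$L^1$/$L^2$ continuity of $G$ and $DG$ on $L^\infty$-balls, density of continuous controls, and finite-dimensionality of the target.
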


\begin{proof}
If $(q,u)$ is strongly pliable, then for all $\eta>0$ there exists $v\in \mathcal{C}_0$ such that $\|v\|_{ \infty}<\eta$, $F(v)=F(0)$ and $F$ is a submersion at $v$. Since $G$ is an extension of $F$, the same conclusion follows by replacing $F$ by $G$.

Let now $\eta>0$ and pick $v\in L^{\infty}([0,1],\R^m)$  such that $\|v\|_{L^\infty}<\eta$, $G(v)=G(0)$ and $G$ is a submersion at $v$, {\it i.e.},
$
D_vG:
L^{\infty}([0,1],\R^m)
\rightarrow
\R^d
$
is surjective.  
By the remarks above, we are left to prove that there exists $w\in \mathcal{C}_0$ such that $\|w\|_{\infty}\leq 2 \eta$ and $F(w)=F(0)$ and $F$ is a submersion at $w$.

We have that $G\in C^1(L^\infty([0,1],\R^m),\R^d)$ where $L^\infty([0,1],\R^m)$ is endowed with the $L^2$-topology (see for instance  \cite[Proposition 5.1.2]{trelat2008controle}, \cite[Section 3]{trelat2000affine}).
Moreover, $\mathcal{C}_0$ is dense in $L^\infty([0,1],\R^m)$ for the $L^2$ topology.
The conclusion then follows from Lemma~\ref{lem:top-deg} in the Appendix, taking $V=L^\infty([0,1],\R^m)$ endowed with the  $L^2$-topology, $W=\mathcal{C}_0$, 
$\mathcal{F}=G$,  and
$\mathcal{F}_n=F$, $v_n=G(0)$ for every $n\in\N$.
\end{proof}

\begin{corollary}\label{C:not_strongly_pliable}
If $0$ is a regular value of $G$ then the pair $(q,u)$ is strongly pliable. 
\end{corollary}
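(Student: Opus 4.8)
My plan is to read Corollary~\ref{C:not_strongly_pliable} off directly from Lemma~\ref{L:restriction}, since all the real work has already been done there and in Section~\ref{SSS:regular}. Recall that, by Lemma~\ref{L:restriction}, the pair $(q,u)$ is strongly pliable precisely when, for every $\eta>0$, there is a control $v\in L^\infty([0,1],\R^m)$ with $\|v\|_{L^\infty}<\eta$, $G(v)=G(0)$, and $G$ a submersion at $v$. So I would argue as follows: if $0$ is a regular value of $G$, then in particular the zero control is a regular point of $G$, i.e.\ the differential $D_0G\colon L^\infty([0,1],\R^m)\to\R^d$ is surjective; hence, for \emph{every} $\eta>0$, the choice $v=0$ already meets all three requirements, as $\|0\|_{L^\infty}=0<\eta$ and $G(0)=G(0)$ tautologically. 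Lemma~\ref{L:restriction} then gives strong pliability. Equivalently, in the language of Definition~\ref{D:strong_pliability}, one notes that $\mathcal{F}^u$ is a submersion at $0$ (by the corank computation of Section~\ref{SSS:regular} it is a submersion at $0$ iff $F$ is, and $F$ is a submersion at $0$ iff $G$ is, since $\mathcal{C}_0$ is dense in $L^\infty$ for the $L^2$-topology, $D_0G$ is continuous, and its image is the finite-dimensional space $\R^d$), so the constant sequence $v_n\equiv 0$ is an admissible witness.

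I do not expect any genuine obstacle here: the corollary is an immediate specialisation of Lemma~\ref{L:restriction} to the choice $v=0$. The only point that deserves a word of care is the interpretation of the hypothesis ``$0$ is a regular value of $G$'': what is used is that $G$ is a submersion at the zero control, which in particular holds whenever $G(0)$ is a regular value of $G$ in the usual sense (every preimage of $G(0)$, and $0$ is one of them, is a regular point). Once this is made explicit, the conclusion follows with no further estimate.
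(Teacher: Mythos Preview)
Your proposal is correct and matches the paper's intent: the corollary is stated without proof because it follows immediately from Lemma~\ref{L:restriction} by taking $v=0$, exactly as you argue. Your remark clarifying that the hypothesis really amounts to ``$G$ is a submersion at the zero control'' is apt, and is confirmed by the paper's subsequent Remark, whose contrapositive is precisely this reading.
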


\begin{remark}
An equivalent formulation of Corollary~\ref{C:not_strongly_pliable}, extending \cite[Section 6]{juillet_sigalotti} is that if $(q,u)$ is not strongly pliable, then 
$[0,1]\ni t \mapsto \e^{t \widehat{X}_u}(0_{\mathbb{G}})$ is an abnormal curve. We recall that a curve
$$
[0,T]\ni t\longmapsto \chronexp \int_0^{t} \widehat{X}_{v(s)}\diff s (0_{\mathbb{G}})
$$
is \emph{abnormal} if the map 
$$
\begin{array}{ccc}
L^{\infty}([0,T],\R^m)
&
\longrightarrow
&
\mathbb{G} 
\\
w
&
\longmapsto
&
\chronexp\int_0^T \widehat{X}_{w(s)}\mathrm{d}s(0_{\mathbb{G}})
\end{array}
$$ 
is singular at $v$.
\end{remark}

\begin{remark}\label{R:u=0}
For every sub-Riemannian manifold $(M,\Delta,g)$, for all $q\in M$, the pair $(q,0)$ is strongly pliable. Indeed, the regularity of $0$ for $G$ in the case $u=0$ is a consequence of Chow's theorem (see for instance \cite{ABB_nov_2016}). The proof straightforwardly extends to pairs $(q,u)$ such that $X_u(q)=0$.
\end{remark}

\subsubsection{Second order conditions}
\label{SSS:second_order}

When $0$ is a singular point for $G$, we can still give conditions ensuring strong pliability of $(q,u)$
in terms of classical optimality and rigidity conditions (see for instance \cite{Agrachev_Sarychev_Morse_rigidity}). The result of this discussion is summarized in Figure~\ref{F:diagram_curves}.

Let us recall some classical conditions for $G$ to have regular values $v\in L^{\infty}([0,1],\R^m)$  arbitrarily close to $0$ such that  $G(v)=G(0)$. 
Namely, it is sufficient for $[0,1]\ni t\mapsto\e^{tX_u}$ not to be the projection of a  Goh or a weak Legendre singular extremal (see \cite{Agrachev_Sarychev_Morse_rigidity}, \cite[Section 20.4]{Sachkov2004Control_theory_geometric_viewpoint}, \cite[Section 12.3]{ABB_nov_2016}).
These conditions summarize as follows.

\begin{proposition}\label{P:goh_and_legendre}
For $\lambda\in {\mathrm{Im}D_0F}^{\perp}\subset T^*_{\e^{\widehat{X}_u}(0_\mathbb{G})}\mathbb{G}$, $\lambda\neq 0$, and  for all $t\in [0,1]$, let 
\begin{equation}\label{E:def_lambda_t}
\lambda_t={\e^{(1-t)\widehat{X}_u}}^*\lambda.
\end{equation}
(In particular $\lambda_1=\lambda$.)
Let $B_G(\lambda,t)$ and $B_L(\lambda,t)$ be two bilinear forms on $\R^m$ defined by  
$$
B_G(\lambda,t; v_1,v_2)= \lambda_t \cdot
\left[
 \widehat{X}_{v_1},
 \widehat{X}_{v_2}
 \right],
 \qquad
 \forall v_1,v_2\in \R^m,
$$
and
$$
B_L(\lambda,t; v_1,v_2)= \lambda_t\cdot
\left[
	\left[ 
	\widehat{X}_{u},
	\widehat{X}_{v_1}
		 \right],
	\widehat{X}_{v_2}
\right],
 \qquad
 \forall v_1,v_2\in \R^m.
$$ 
If for all $\lambda\in {\mathrm{Im}D_0F}^{\perp}$, $\lambda\neq 0$, there exist some $t\in[0,1]$, $v_1,v_2\in \R^m$ such that either 
\begin{equation}\label{E:Goh}
B_G(\lambda,t;v_1,v_2)\neq 0 \tag{$\mathbf{G}$}
\end{equation}
or 
\begin{equation}\label{E:Legendre}
B_L(\lambda,t;v_1,v_1)< 0\quad 
 \tag{$\mathbf{L}$}
\end{equation}
then $(q,u) $ is strongly pliable.

\end{proposition}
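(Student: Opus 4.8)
The plan is to use Lemma~\ref{L:restriction} to reduce strong pliability of $(q,u)$ to the existence of regular points $v$ of the $L^\infty$-endpoint map $G$ arbitrarily close to $0$ with $G(v)=G(0)$, and then to invoke the classical second-order theory of the endpoint map on the Carnot group $\mathbb{G}$. Recall from Section~\ref{SSS:regular} that $\lambda\in{\mathrm{Im}D_0F}^\perp$ if and only if $\lambda\in{\mathrm{Im}D_0G}^\perp$, since $F$ and $G$ have the same differential at $0\in\mathcal{C}_0\subset L^\infty$ (the constant control $0$ being continuous). So the hypothesis can be read directly as a statement about the corank-one annihilators of $D_0G$.

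First I would set up the adjoint/Hamiltonian picture. For a covector $\lambda$ annihilating $\mathrm{Im}\,D_0G$ along the trajectory $t\mapsto\e^{t\widehat X_u}(0_\mathbb{G})$, the curve $\lambda_t$ defined in \eqref{E:def_lambda_t} is precisely the solution of the adjoint equation, and the standard first-order condition for $\lambda$ to annihilate the image of $D_0G$ reads $\lambda_t\cdot\widehat X_{v}=0$ for every $v\in\R^m$ and every $t$; differentiating this identity along the flow of $\widehat X_u$ produces the Goh-type relation, and a further differentiation produces the Legendre-type relation. The content of Proposition~\ref{P:goh_and_legendre} is that if \emph{no} such $\lambda$ survives these compatibility conditions — i.e.\ for every nonzero $\lambda\in{\mathrm{Im}D_0F}^\perp$ there is a time $t$ and vectors $v_1,v_2$ witnessing \eqref{E:Goh} or \eqref{E:Legendre} — then the reference control $0$ fails to be a Goh extremal and fails the weak generalized Legendre condition. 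The key step is then to quote the known result (Agrachev--Sarychev \cite{Agrachev_Sarychev_Morse_rigidity}, or \cite[Section 20.4]{Sachkov2004Control_theory_geometric_viewpoint}, \cite[Section 12.3]{ABB_nov_2016}): a corank-one singular control of the endpoint map at which the Goh condition fails, or at which it holds but the generalized Legendre condition is violated in the strict sense \eqref{E:Legendre}, is \emph{not} a local minimum of the corresponding index form, and in fact the endpoint map $G$ is locally open at $0$ with regular values of $G$ accumulating at $0$ on the fiber $G^{-1}(G(0))$. More precisely, these second-order conditions guarantee that $0$ lies in the closure of the set of points $v$ at which $D_vG$ is surjective and $G(v)=G(0)$.

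Concretely the argument runs as follows. Fix $\eta>0$. If $0$ is already a regular point of $G$, Corollary~\ref{C:not_strongly_pliable} (via Remark or directly) gives strong pliability and we are done. Otherwise $0$ is singular; by hypothesis every nonzero $\lambda$ in the finite-dimensional space ${\mathrm{Im}D_0F}^\perp$ satisfies \eqref{E:Goh} or \eqref{E:Legendre}. Then the cited second-order results apply: there exists a control $v$ with $\|v\|_{L^\infty}<\eta$, $G(v)=G(0)$, and $D_vG$ surjective. Applying Lemma~\ref{L:restriction}, $(q,u)$ is strongly pliable. The one point needing care is the passage from ``$0$ is not a strict local minimizer of the index form / $0$ is not rigid'' to ``regular points of $G$ with the same endpoint accumulate at $0$'': this is where one must cite the precise openness-with-corank-drop statement rather than mere local openness. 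The main obstacle in writing this cleanly is therefore bookkeeping between the three slightly different formulations in the literature (Goh extremals, weak Legendre extremals, rigidity of $t\mapsto\e^{tX_u}$) and extracting from them the exact statement ``$\exists\,v_n\to 0$ in $L^\infty$ with $G(v_n)=G(0)$ and $D_{v_n}G$ onto''; once that is in hand, Lemma~\ref{L:restriction} closes the argument with no further work.

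I expect the verification that the Goh and generalized Legendre forms $B_G,B_L$ are exactly the second variations appearing in the classical conditions — i.e.\ that the bilinear forms written in coordinate-free bracket form above coincide with the ones in \cite{Agrachev_Sarychev_Morse_rigidity} once one accounts for the adjoint transport $\lambda_t={\e^{(1-t)\widehat X_u}}^*\lambda$ — to be the routine but slightly tedious part, and it can be dispatched by a direct computation with the series expansion \eqref{E:exp_ad} for $\e^{\sigma\ad \widehat X_u}\widehat X_{v}$; no genuinely new idea is required there.
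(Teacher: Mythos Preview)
Your proposal is correct and follows essentially the same route as the paper: identify $\mathrm{Im}\,D_0F^\perp=\mathrm{Im}\,D_0G^\perp$, invoke the second-order theory of the endpoint map on $\mathbb{G}$ to produce regular points of $G$ on the fiber $G^{-1}(G(0))$ arbitrarily close to $0$, and conclude by Lemma~\ref{L:restriction}. The only place where the paper is sharper is precisely the step you flag as ``needing care'': rather than appealing to a general openness-with-corank-drop statement, the paper cites Lemmas~20.7 and~20.8 of \cite{Sachkov2004Control_theory_geometric_viewpoint}, which give the exact criterion $\mathrm{ind}_-\,\lambda\,\mathrm{Hess}_0\,G\geq k$ (with $k$ the corank of $G$ at $0$) for the existence of such regular values, and then observes that either \eqref{E:Goh} or \eqref{E:Legendre} forces $\mathrm{ind}_-\,\lambda\,\mathrm{Hess}_0\,G=+\infty$ (cf.\ \cite[Section~20.4]{Sachkov2004Control_theory_geometric_viewpoint}). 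This is the precise reference you were looking for; with it in hand your sketch becomes a complete proof.
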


\begin{proof}
It follows from Lemmas 20.7 and 20.8 of \cite{Sachkov2004Control_theory_geometric_viewpoint} that, as soon as 
$$
\mathrm{ind}_- \lambda \,\mathrm{Hess}_0 \,G\geq k \qquad \forall \lambda\in {\mathrm{Im}D_0G}^{\perp} ,\lambda\neq 0,
$$
with $k$ the corank of $G$ at $0$,
$G$ has regular values $v\in L^{\infty}([0,1],\R^m)$  arbitrarily close to $0$ such that  $G(v)=G(0)$.

Moreover,  for $\lambda\in {\mathrm{Im}D_0G}^{\perp}$, $\lambda\neq 0$, if there exist some $t\in[0,1]$, $v_1,v_2\in \R^m$ such that either 
$
B_G(\lambda,t;v_1,v_2)\neq 0
$
or 
$
B_L(\lambda,t;v_1,v_1)< 0
$
then $\mathrm{ind}_- \lambda \,\mathrm{Hess}_0 \,G=+\infty$ (\cite[Section 20.4]{Sachkov2004Control_theory_geometric_viewpoint}).

By smoothness of $G$ with respect to the $L^2$ topology and $L^2$-density of $\mathcal{C}_0$ in $L^{\infty}([0,1],\R^m)$, we have $\mathrm{Im}D_0G^\perp=\mathrm{Im}D_0F^\perp$. The conclusion then follows from Lemma~\ref{L:restriction}.
\end{proof}

\begin{figure}[ht]
\centering
\begin{tikzpicture}

\draw (0.,0.) rectangle (4.1,5.2) ;
\draw (2.1,4.6) node{Pliable};

\draw (.1,.1) rectangle (4.,4.) ;
\draw (2.1,3.4) node{Strongly pliable};

\draw (.3,.3) rectangle (3.9,2.8) ;
\draw (2.1,2.45) node{not Goh nor};
\draw (2.1,1.95) node{weak Legendre};

\draw (.4,.4) rectangle (3.8,1.6) ;
\draw (2.1,1) node{Soft};

\draw (0.2,0.2) -- (.2,2.9)-- (4.3,2.9)--(4.3,4)--(8.1,4)--(8.1,.2)--(4.2,.2) --(.2,.2);

\draw (6.3,3.4) node{Abnormal};

\draw (4.4,0.3) rectangle (8.,2.8) ;
\draw (6.2,2.2) node{Rigid};

\draw (4.5,.4) rectangle (7.9,1.6) ;
\draw (6.2,1.25) node{Goh and};
\draw (6.2,.75) node{strong Legendre};

\end{tikzpicture}
\caption{
Inclusion diagram of different classes of horizontal curves.
\label{F:diagram_curves}}

\end{figure}
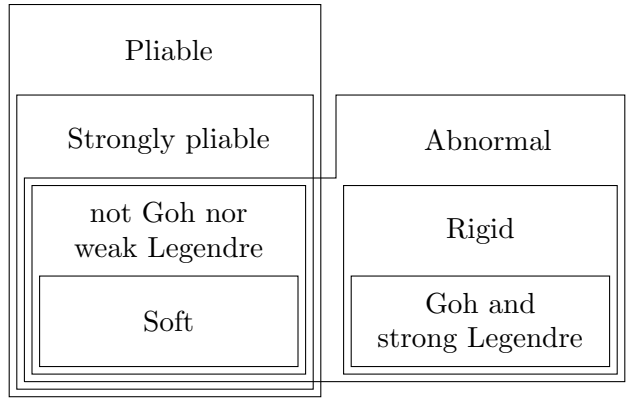
 In Figure~\ref{F:diagram_curves}, we represent the inclusion diagram of horizontal curves having properties related to strong pliability. 
 By Goh, we intend curves that are the projection of some $\lambda_t$ (as in \eqref{E:def_lambda_t}) such that $\lambda_1\in {\mathrm{Im}D_0F}^{\perp}$, $\lambda_1\neq 0$, and for all $t\in[0,1]$, $v_1,v_2\in \R^m$ , $B_G(\lambda_1,t;v_1,v_2)= 0$.
A Goh curve is strong (respectively, weak) Legendre if for all $t\in[0,1]$, $v\in \R^m$ , $B_L(\lambda_1,t;v,v) > 0$ (respectively, $B_L(\lambda_1,t;v,v) \geq 0$) (see \cite{Sachkov2004Control_theory_geometric_viewpoint}).
An example of non-Goh curves are soft abnormals, introduced in \cite[Definition 1]{boarotto_lerario_invisible_singular_curves}.

Let us present some consequences of Proposition~\ref{P:goh_and_legendre}. The following result is based on the fact that a non-Goh curve is strongly pliable.
\begin{proposition}\label{P:brackets_goh}
Let $q\in M$ and $\mathbb{G}=(\R^d,(\widehat{X}_1,\dots,\widehat{X}_m))$ be the nilpotent approximation of $(M,\Delta,g)$ at $q$. Let $\widehat{\Delta}$ be the distribution generated by $\widehat{X}_1,\dots,\widehat{X}_m$. Let $u\in \R^m$ be such that 
\begin{equation}\label{E:Goh_cond_deriv}
\sum_{k=0}^{+\infty} \left((\mathrm{ad}\widehat{X}_u)^k \widehat{\Delta}^2\right)_0= T_0\mathbb{G}.
\end{equation}
Then $(q,u)\in M\times \R^m$ is strongly pliable.
\end{proposition}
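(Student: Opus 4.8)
\emph{Proof plan.} The goal is to reduce to the Goh alternative of Proposition~\ref{P:goh_and_legendre}: it is enough to show that for every nonzero $\lambda\in(\mathrm{Im}\,D_0F)^\perp$ there exist $t\in[0,1]$ and $v_1,v_2\in\R^m$ with $B_G(\lambda,t;v_1,v_2)\neq 0$, for then the \eqref{E:Goh} branch of Proposition~\ref{P:goh_and_legendre} applies and $(q,u)$ is strongly pliable. Equivalently, I argue by contradiction that the horizontal line $\gamma(t)=\e^{t\widehat X_u}(0_\mathbb{G})$, $t\in[0,1]$, is not a Goh extremal.

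So suppose there is a nonzero $\lambda\in(\mathrm{Im}\,D_0F)^\perp\subset T^*_{\gamma(1)}\mathbb{G}$ with $B_G(\lambda,t;\cdot,\cdot)\equiv 0$ for all $t\in[0,1]$, and let $\lambda_t=(\e^{(1-t)\widehat X_u})^*\lambda\in T^*_{\gamma(t)}\mathbb{G}$ be as in \eqref{E:def_lambda_t}. Membership in $(\mathrm{Im}\,D_0F)^\perp$ means precisely that $\lambda$ is an abnormal covector of $\gamma$ (cf.\ Section~\ref{SSS:regular} and the remark following Corollary~\ref{C:not_strongly_pliable}), which, with the normalization \eqref{E:def_lambda_t}, amounts to $\langle\lambda_t,\widehat X_i(\gamma(t))\rangle=0$ for all $t\in[0,1]$ and $1\le i\le m$. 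Combined with $\langle\lambda_t,[\widehat X_i,\widehat X_j](\gamma(t))\rangle=B_G(\lambda,t;e_i,e_j)=0$, this gives $\lambda_t\perp\widehat\Delta^2_{\gamma(t)}$ for all $t\in[0,1]$.

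The heart of the argument is differentiation along the flow of $\widehat X_u$. For a smooth vector field $Y$ put $h_Y(t)=\langle\lambda_t,Y(\gamma(t))\rangle$; with the sign convention \eqref{E:def_lambda_t} the standard adjoint identity reads $\tfrac{\diff}{\diff t}h_Y(t)=h_{[\widehat X_u,Y]}(t)$, hence by induction $\tfrac{\diff^k}{\diff t^k}h_Y(t)=h_{(\ad\widehat X_u)^kY}(t)$ for every $k\in\N$. Apply this to each generator $Y\in\{\widehat X_1,\dots,\widehat X_m\}\cup\{[\widehat X_i,\widehat X_j]\mid i<j\}$ of the $C^\infty$-module $\widehat\Delta^2$: by the previous paragraph $h_Y\equiv 0$ on $[0,1]$, so all its derivatives vanish, whence $h_{(\ad\widehat X_u)^kY}\equiv 0$ and in particular $\langle\lambda_0,(\ad\widehat X_u)^kY(0_\mathbb{G})\rangle=0$ for every $k\ge 0$. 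A Leibniz expansion of $(\ad\widehat X_u)^k(fY)$ shows that $\big((\ad\widehat X_u)^k\widehat\Delta^2\big)_{0_\mathbb{G}}$ is spanned by the vectors $(\ad\widehat X_u)^jY(0_\mathbb{G})$ with $j\le k$ and $Y$ a generator, so $\lambda_0$ annihilates $\sum_{k\ge 0}\big((\ad\widehat X_u)^k\widehat\Delta^2\big)_{0_\mathbb{G}}=T_{0_\mathbb{G}}\mathbb{G}$ by hypothesis \eqref{E:Goh_cond_deriv}. Thus $\lambda_0=0$; since $\lambda_0=(\e^{\widehat X_u})^*\lambda$ and $(\e^{\widehat X_u})^*$ is a linear isomorphism, $\lambda=0$, a contradiction. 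Therefore for every nonzero $\lambda\in(\mathrm{Im}\,D_0F)^\perp$ the Goh alternative holds, and Proposition~\ref{P:goh_and_legendre} yields the strong pliability of $(q,u)$.

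The only delicate points are bookkeeping: matching the pullback convention \eqref{E:def_lambda_t} with the adjoint identity $\tfrac{\diff}{\diff t}h_Y=h_{[\widehat X_u,Y]}$, and recording that membership in $(\mathrm{Im}\,D_0F)^\perp$ forces $\lambda_t$ to annihilate $\widehat\Delta$ along all of $\gamma([0,1])$ and not merely where a control is supported; both are classical facts from the Hamiltonian description of the endpoint map recalled in Section~\ref{SSS:regular} and in \cite{Sachkov2004Control_theory_geometric_viewpoint}. The iteration and the Leibniz computation are routine, and the case $(\mathrm{Im}\,D_0F)^\perp=\{0\}$ is handled vacuously (it also follows directly from Corollary~\ref{C:not_strongly_pliable}).
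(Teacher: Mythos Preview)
Your proof is correct and follows the same strategy as the paper's: argue by contradiction that the curve is not Goh, differentiate the Goh pairing along the flow of $\widehat X_u$, and conclude that the abnormal covector annihilates $\sum_k((\ad\widehat X_u)^k\widehat\Delta^2)$. The only cosmetic difference is that you evaluate at $t=0$ (where the hypothesis \eqref{E:Goh_cond_deriv} is stated) and deduce $\lambda_0=0$, whereas the paper evaluates at $t=1$ and works with $\lambda=\lambda_1$; your choice is slightly more direct, and your Leibniz remark together with the explicit inclusion of the first-order generators $\widehat X_i$ of $\widehat\Delta^2$ are extra care the paper leaves implicit.
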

\begin{proof}
The proof works by proving that, under condition \eqref{E:Goh_cond_deriv},  the curve  $t\mapsto \e^{t\widehat{X}_u}(0)$ is not Goh. The conclusion then follows from Proposition~\ref{P:goh_and_legendre}.

Assume by contradiction that $\lambda_t$ is a  lift of the integral curve of $\widehat{X}_u$ such that $\lambda_1\in {\mathrm{Im}D_0F}^{\perp}$, $\lambda_1\neq 0$, $B_G(\lambda,t;v_1,v_2)=0$ for all $t\in[0,1]$ and all $v_1,v_2\in \R^m$. Thus by differentiating $t\mapsto B_G(\lambda,t;v_1,v_2)$  $k$ times and passing to the limit as $t\rightarrow 1^-$, we get 
$$
\lambda\cdot (\mathrm{ad}\widehat{X}_u)^k [\widehat{X}_{v_1},\widehat{X}_{v_2}]=0
\qquad \forall k\in \N,\forall v_1,v_2\in \R^m.
$$
If \eqref{E:Goh_cond_deriv} is satisfied, then $\lambda\in  \bigcap_{k=0}^{+\infty} ( (\mathrm{ad}\widehat{X}_u)^k \widehat{\Delta}^2)_0^\perp={T_0\mathbb{G}}^\perp=\{0\}$, hence the statement.
\end{proof}

We show below how Proposition~\ref{P:brackets_goh}  can be used to assess strong pliability for  step-2 distributions and more generally to medium-fat distributions (see \cite{rifford2014optimaltransport}).
This result extends \cite[Theorem 6.4]{juillet_sigalotti} where it was proved, as an application of \cite[Corollary 1.2]{bianchini_stefani_graded},  that for a step-2 Carnot group $\mathbb{G}$ every vector $\widehat{X}_{u}$, $u\in \R^m$, is pliable.

\begin{corollary}\label{P:step2_pliability}
Let $(M,\Delta,g)$ be a sub-Riemannian manifold and assume that $\Delta$ is medium-fat, \textit{i.e.}, for every $q\in M$ and every $u\in \R^m$ such that $X_u(q)\neq 0$,
\begin{equation}\label{E:medium_fat}
\Delta_{q}^2+\left[X_u,  \Delta^2\right]_{q}= T_{q}M .
\end{equation}
Then every pair $(q,u)\in M\times \R^m$ is strongly pliable.
\end{corollary}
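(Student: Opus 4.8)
The plan is to derive the statement from Proposition~\ref{P:brackets_goh}: I will check that the medium-fat hypothesis forces the Goh-type condition \eqref{E:Goh_cond_deriv} at every pair $(q,u)\in M\times\R^m$. If $X_u(q)=0$ this is immediate from Remark~\ref{R:u=0}, so I assume $X_u(q)\neq 0$, which is exactly the regime in which \eqref{E:medium_fat} is assumed to hold. Fix such a $q$, let $\mathbb G=(\R^d,(\widehat X_1,\dots,\widehat X_m))$ be the nilpotent approximation at $q$ and $\widehat\Delta$ the distribution it generates. Since $[\widehat X_u,\widehat X_i]\in\widehat\Delta^2$ and $[\widehat X_i,\widehat X_j]\in\widehat\Delta^2$, one has $\widehat\Delta^2_0+[\widehat X_u,\widehat\Delta^2]_0=\widehat\Delta^2_0+\mathrm{span}\{[\widehat X_u,[\widehat X_i,\widehat X_j]]_0: 1\le i,j\le m\}$, so to obtain \eqref{E:Goh_cond_deriv} (which contains the $k=0$ and $k=1$ terms) it suffices to show that this space is all of $T_0\mathbb G$.

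The key point is that \eqref{E:medium_fat} is a statement about the graded nilpotent Lie algebra attached to $q$, and is therefore inherited by the nilpotent approximation. Since $[X_u,\Delta^2]\subseteq\Delta^3$, the hypothesis $\Delta_q^2+[X_u,\Delta^2]_q=T_qM$ forces $\Delta^3_q=T_qM$, that is, the step at $q$ is at most $3$ (if it is at most $2$ the argument is vacuous, as then $\widehat\Delta^2_0=T_0\mathbb G$ already). I would then work in a system of privileged coordinates at $q$, in which $q=0$ and $\widehat X_i$ is homogeneous of order $-1$ for the dilations at $q$, so that iterated brackets of length $\ell$ are homogeneous of order $-\ell$; because the step is $\le 3$, every order-$(-3)$ vector field on $\mathbb G$ has constant coefficients. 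Using the standard order estimates for nilpotent approximations (\cite{bellaiche1996TagentSpace}, \cite[Section~2.1]{jean2014control}) I would record two facts: (i) privileged coordinates are linearly adapted, so $\Delta^s_q=\widehat\Delta^s_0=\mathrm{span}\{\partial_{x_j}: w_j\le s\}$ for every $s$, in particular $T_qM=T_0\mathbb G$ and $\Delta^2_q=\widehat\Delta^2_0$; and (ii) writing $X_i=\widehat X_i+R_i$ with $R_i$ of order $\ge 0$, the bracket $[X_u,[X_i,X_j]]$ has principal part $[\widehat X_u,[\widehat X_i,\widehat X_j]]$ and a remainder of order $\ge -2$, hence taking values in $\Delta^2_q$ at the origin.

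Putting (i) and (ii) together gives $[X_u,[X_i,X_j]]_q\equiv[\widehat X_u,[\widehat X_i,\widehat X_j]]_0\pmod{\widehat\Delta^2_0}$ for all $i,j$ (here both sides are order-$(-3)$ fields, hence determined by their value at $0$). On the other hand $[X_u,\Delta^2]_q=[X_u,\Delta]_q+[X_u,[\Delta,\Delta]]_q$ with $[X_u,\Delta]_q\subseteq\Delta^2_q$ and $[X_u,[\Delta,\Delta]]_q$ spanned modulo $\Delta^2_q$ by the vectors $[X_u,[X_i,X_j]]_q$. Hence \eqref{E:medium_fat} is equivalent to
$$
\widehat\Delta^2_0+\mathrm{span}\{[\widehat X_u,[\widehat X_i,\widehat X_j]]_0: 1\le i,j\le m\}=T_0\mathbb G,
$$
which by the first paragraph yields \eqref{E:Goh_cond_deriv}. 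Proposition~\ref{P:brackets_goh} then shows that $(q,u)$ is strongly pliable, and since $(q,u)$ was arbitrary this proves the corollary.

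I expect the only genuine work to be in the second paragraph: verifying — preferably by quoting the order and homogeneity estimates of \cite{bellaiche1996TagentSpace} and \cite[Section~2.1]{jean2014control} rather than reproving them — that passing to the nilpotent approximation neither creates nor destroys the degree-$3$ content of the brackets $[X_u,[X_i,X_j]]$; the reduction to step $\le 3$, which makes order-$(-3)$ fields constant-coefficient, is the clean way to see that there is no loss. Everything else is formal linear algebra.
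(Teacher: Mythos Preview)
Your proposal is correct and follows essentially the same route as the paper: handle $X_u(q)=0$ via Remark~\ref{R:u=0}, and for $X_u(q)\neq 0$ use order/homogeneity estimates in privileged coordinates to transfer the medium-fat identity to the nilpotent approximation (the paper records this as $[\widehat X_u,[\widehat X_i,\widehat X_j]](0)\in\Phi_*[X_u,[X_i,X_j]](q)+\widehat\Delta^2_0$), then invoke Proposition~\ref{P:brackets_goh}. Your additional remark that medium-fat forces step $\le 3$, making order-$(-3)$ fields constant-coefficient, is a helpful clarification the paper leaves implicit, but it does not change the argument.
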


\begin{proof}
The case where $X_u(q)=0$ follows from Remark~\ref{R:u=0}. 
Let now $q\in M$ and $u\in \R^m$ be such that $X_u(q)\neq 0$. Let $\Phi$ be a system of privileged coordinates at $q$.
Recall that for all positive integer $k>0$ $\widehat{\Delta}^k_0 ={\Phi}_*\Delta_q^k$ and that
\begin{equation}\label{E:bracket_at_0}
\left[\widehat{X}_u,\left[\widehat{X}_i,\widehat{X}_j\right]\right](0)
\in
\Phi_* \left[X_u,\left[X_i,X_j\right]\right](q)+\widehat{\Delta}^2_0,
\quad 
\text{ for every }
1\leq i,j\leq m.
\end{equation}
Assumption~\eqref{E:medium_fat} then implies that 
$
\widehat{\Delta}^2_0+ \left[\widehat{X}_u ,\widehat{\Delta}^2\right]_0= T_0\mathbb{G}
$. The conclusion follows from Proposition~\ref{P:brackets_goh}.
\end{proof}

 Another consequence of Proposition~\ref{P:goh_and_legendre} (in particular, of the property that a curve that is not weak Legendre is  strongly pliable) is the following.

\begin{corollary}\label{C:not_legendre}
Let $(M,\Delta,g)$ be a step-3 sub-Riemannian manifold.
Let $q\in M$ and $u\in \R^m$  be such that the convex positive cone
\begin{equation}\label{E:positive_convex}
C_{u} =
\mathrm{conv}
\left\{
W(q)
+
 [[X_u,V],V](q)
\mid 
V\in \Delta, W\in \Delta^2
\right\}
\end{equation}
is equal to $T_qM$.
Then $(q,u)$ is strongly pliable.
\end{corollary}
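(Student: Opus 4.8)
The strategy is to deduce the statement from Proposition~\ref{P:goh_and_legendre} by showing that, under the hypothesis that the convex cone $C_u$ in \eqref{E:positive_convex} equals $T_qM$, the curve $t\mapsto \e^{t\widehat{X}_u}(0_{\mathbb{G}})$ in the nilpotent approximation $\mathbb{G}=(\R^d,(\widehat{X}_1,\dots,\widehat{X}_m))$ at $q$ cannot be a weak Legendre extremal; that is, for every nonzero $\lambda\in{\mathrm{Im}D_0F}^{\perp}$ there exist $t\in[0,1]$ and $v_1,v_2\in\R^m$ with either $B_G(\lambda,t;v_1,v_2)\neq 0$ or $B_L(\lambda,t;v_1,v_1)<0$. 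Once that dichotomy is established, strong pliability of $(q,u)$ follows immediately from Proposition~\ref{P:goh_and_legendre}.

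First I would transfer the condition from $M$ to the nilpotent approximation. Fixing a system of privileged coordinates $\Phi$ at $q$, one has $\widehat{\Delta}^k_0=\Phi_*\Delta^k_q$ for every $k$, and, as in \eqref{E:bracket_at_0}, for every $V=X_v$ with $v\in\R^m$ and every $W\in\Delta^2$,
$$
[[\widehat{X}_u,\widehat{X}_v],\widehat{X}_v](0)\in \Phi_*[[X_u,V],V](q)+\widehat{\Delta}^2_0,
\qquad
\Phi_* W(q)\in\widehat{\Delta}^2_0.
$$
Hence the image under $\Phi_*$ of $C_u$ is contained in the convex positive cone
$$
\widehat{C}_u=\mathrm{conv}\bigl\{Z(0)+[[\widehat{X}_u,\widehat{X}_v],\widehat{X}_v](0)\mid v\in\R^m,\ Z\in\widehat{\Delta}^2\bigr\},
$$
so the assumption $C_u=T_qM$ gives $\widehat{C}_u=T_0\mathbb{G}$. (Here I use that the step is $3$, so $\widehat{\Delta}^2$ together with brackets of the form $[[\widehat{X}_u,\widehat{X}_v],\widehat{X}_v]$ already spans.) The point of passing to $\widehat{C}_u$ is that it is a convex cone: if it were not all of $T_0\mathbb{G}$ it would be contained in a closed half-space, i.e. there would be $\lambda\neq 0$ with $\lambda\cdot z\le 0$ for all $z\in\widehat{C}_u$.

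Now take any nonzero $\lambda\in{\mathrm{Im}D_0F}^{\perp}$ and suppose, for contradiction, that the curve is a weak Legendre extremal with this $\lambda$, so that $B_G(\lambda,t;v_1,v_2)=0$ and $B_L(\lambda,t;v,v)\ge 0$ for all $t\in[0,1]$, $v,v_1,v_2\in\R^m$. Evaluating at $t=1$ (where $\lambda_1=\lambda$) gives $\lambda\cdot[\widehat{X}_{v_1},\widehat{X}_{v_2}](0)=0$ for all $v_1,v_2$, i.e. $\lambda\perp\widehat{\Delta}^2_0$, so in particular $\lambda\cdot Z(0)=0$ for every $Z\in\widehat{\Delta}^2$; and $\lambda\cdot[[\widehat{X}_u,\widehat{X}_v],\widehat{X}_v](0)=-B_L(\lambda,1;v,v)\le 0$ for every $v\in\R^m$. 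Taking convex combinations, $\lambda\cdot z\le 0$ for every $z\in\widehat{C}_u=T_0\mathbb{G}$, which forces $\lambda=0$, a contradiction. Therefore every nonzero $\lambda\in{\mathrm{Im}D_0F}^{\perp}$ violates either the Goh or the weak Legendre condition, and Proposition~\ref{P:goh_and_legendre} yields that $(q,u)$ is strongly pliable.

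The main obstacle is the bookkeeping in the first step: one must check carefully that the bracket identities at $0$ in privileged coordinates (the analogue of \eqref{E:bracket_at_0} for the iterated bracket $[[X_u,V],V]$, and the fact that in step $3$ nothing of higher order is needed) indeed give $\widehat{C}_u=T_0\mathbb{G}$, keeping track of the $\pm$ sign relating $B_L(\lambda,1;v,v)$ to $\lambda\cdot[[\widehat{X}_u,\widehat{X}_v],\widehat{X}_v](0)$ and of the quadratic (rather than bilinear) dependence on $v$ in the Legendre term. The rest is a soft convex-separation argument.
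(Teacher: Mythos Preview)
Your proposal is correct and follows essentially the same route as the paper's proof: transfer the cone condition to the nilpotent approximation via \eqref{E:bracket_at_0}, then derive a contradiction with the Goh/weak Legendre conditions through a convex-separation argument. There is a harmless sign slip---by the definition of $B_L$ one has $\lambda\cdot[[\widehat{X}_u,\widehat{X}_v],\widehat{X}_v]=+B_L(\lambda,1;v,v)\ge 0$, not $-B_L$---but either inequality yields the same contradiction since $\widehat{C}_u=T_0\mathbb{G}$ is the whole space.
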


\begin{proof}
Let $\mathbb{G}=(\R^d,(\widehat{X}_1,\dots,\widehat{X}_m))$ be the nilpotent approximation of $(M,\Delta,g)$ at $q$ and $\widehat{\Delta}$ be the distribution generated by $\widehat{X}_1,\dots,\widehat{X}_m$. 
Then $\mathbb{G}$ is of step $3$ and 
\begin{equation}\label{E:positive_convex_nilp}
\widehat{C}_{u}=\mathrm{conv}
\left\{
W(0)
+
 [[\widehat{X}_u,V],V](0)
\mid 
V\in \widehat{\Delta}, W\in\widehat{\Delta}^2
\right\}
\end{equation}
is equal to $T_0\mathbb{G}$ (see \eqref{E:bracket_at_0}).

Following Proposition~\ref{P:goh_and_legendre}, we assume by contradiction that there exists  $\lambda\in {\mathrm{Im}D_0F}^{\perp}\setminus \{0\}$ such that $ \lambda\in \left(\widehat{\Delta}^2_{0}\right)^\perp$ and  
$$
\lambda \cdot  [[\widehat{X}_u,V],V](0)\geq 0
$$
for every  $V\in \widehat{\Delta}$. It then follows from the equality $\widehat{C}_u=T_0\mathbb{G}$ that $\lambda\cdot Z\geq 0$ for every $Z\in T_0\mathbb{G}$. This leads to a contradiction since $\lambda\neq 0$.
\end{proof}

\begin{example} Let $(x_1,x_2,x_3,y_1,y_2,w)$ be the canonical coordinates on $\R^6$ and define
$$
\left\{
\begin{array}{l}
X_1=\partial_{x_1},
\\
X_2=\partial_{x_2},
\\
X_3=\partial_{x_3}+x_1\partial_{y_1}+x_2\partial_{y_2}+\frac{1}{2}\left(x_1^2+\alpha x_2^2\right)\partial_w,
\end{array}
\right.
$$
with $\alpha<0$.
We set 
$$
\begin{array}{l}
W_1=\left[X_1,X_3\right]=\partial_{y_1}+x_1\partial_{w},
\\
W_2=\left[X_2,X_3\right]=\partial_{y_2}+\alpha x_2\partial_{w},
\\
Z=\partial_w,
\end{array}
$$
and we notice that
$$
\begin{array}{l}
\left[X_1,X_2\right]=\left[X_1,W_2\right]=\left[X_2,W_1\right]=0,
\\
\left[X_1,W_1\right]=Z,
\\
\left[X_2,W_2\right] =\alpha Z.
\end{array}
$$
By \cite[Theorem 4.2.10]{Bonfiglio2007Stratified}, the Lie algebra generated by $\{X_1,X_2,X_3\}$ is a Carnot algebra.

Take $u\in \R^3$ and let us prove the strong pliability of $(0,u)$. If $u=0$, this is a consequence of Remark~\ref{R:u=0}.  If either $u_1$ or $u_2$ is non-zero,  the  strong pliability of $(0,u)$ is a consequence of Proposition~\ref{P:brackets_goh} since 
$$
[X_u,[ X_1, X_3]]=u_1 Z,\quad [X_u,[ X_2, X_3]]=\alpha u_2 Z.
$$
Finally, 
if $u_1=0$, $u_2=0$ and $u_3\neq 0$,
$$
[[X_u, X_1 ],X_1 ]=u_3 Z,\quad [[X_u,  X_2], X_2]=  \alpha u_3 Z,
$$
and the strong pliability of $(0,u)$ follows from Corollary~\ref{C:not_legendre}.
\end{example}

\subsection{Strong pliability implies the $C^1_H$ extension property}\label{SS:Theorem}
\begin{theorem}\label{P:Strongly_pliable=>WT}
Let $(M,\Delta,g)$ be an equiregular sub-Riemannian manifold.
If every pair $(q,u)\in M\times \R^m$ is strongly pliable then the $C^1_H$ extension property holds for $(M,\Delta,g)$.
\end{theorem}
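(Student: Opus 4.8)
The plan is to reduce the problem, via Corollary~\ref{C:projection_W_T} and the desingularization results, to a purely local statement on an equiregular manifold, and then to build the $C^1_H$ extension curve piece by piece on the complementary intervals of $K$. So let $K\subset\R$ be closed and let $(f,L):K\to M\times TM$ be continuous, satisfying the $C^1_H$-Whitney condition. By compactness arguments and a partition of unity in time it is enough to work on a compact neighborhood $K'$ of a fixed $t_0\in K$, on which there is a frame $X=(X_1,\dots,X_m)$ on an open $\Omega$ such that $(f|_{K'},L|_{K'})$ satisfies $\mathcal{P}_X$; fix a continuous $u:K'\to\R^m$ with $L(t)=X_{u(t)}(f(t))$. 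The complement $K'^c$ is a countable union of open intervals $(a_j,b_j)$ with endpoints in $K'$ (plus possibly two unbounded ends, handled trivially by prolonging with a constant control). On each such gap we must produce a control $v_j$ on $[a_j,b_j]$, continuous, with $v_j(a_j)=u(a_j)$, $v_j(b_j)=u(b_j)$, and such that $\chronexp\int_{a_j}^{b_j} X_{v_j(s)}\diff s (f(a_j))=f(b_j)$; moreover $\|v_j - u(a_j)\|_\infty$ must be controlled by a modulus that is $o(1)$ in $b_j-a_j$, uniformly in $j$, so that the glued curve $\gamma$ is $C^1_H$ (continuity of $\dot\gamma$ across each $b_j$ being forced by $v_j(b_j)=u(b_j)=v_{j+1}(a_{j+1})$ and the Whitney estimate controlling the drift on $K'$ itself).

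The heart of the matter is the gap construction, and this is where strong pliability enters. Fix a gap $(a,b)$, write $\ell=b-a$, $q=f(a)$, $u_0=u(a)$, and work in a continuously varying system of privileged coordinates $\Phi$ around $q$ (Theorem~\ref{T:equiregular}), using dilations $\delta^{q}_{1/\ell}$. After rescaling time to $[0,1]$ and applying the dilation, the endpoint condition becomes: find $v\in\mathcal C_0$, small, with
$$
\chronexp\int_0^1 (\ell\,{\delta^{q}_{1/\ell}}_* X_{u_0 + v(s)/\ell})\,\diff s(0) = \delta^{q}_{1/\ell}(f(b)).
$$
As $\ell\to0$ the rescaled vector fields $\ell\,{\delta^{q}_{1/\ell}}_* X_i$ converge locally uniformly to the nilpotent approximation $\widehat X_i$ at $q$, and — by Proposition~\ref{P:lim_dilation_forward} applied to the Whitney condition on $K'$ (using $a,b\to t_0$-type sequences, or rather the uniform version) — the target point $\delta^{q}_{1/\ell}(f(b))$ converges to $\e^{\widehat X_{u(l)}}(0)=\mathcal F^{u_0}(0)_1$, with the second coordinate $v(1)$ prescribed to be $u(b)-u_0\to 0$. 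In the limiting Carnot-group problem this is exactly solved by $v=0$ in the augmented map $\mathcal F^{u_0}$ of Definition~\ref{D:strong_pliability}. Strong pliability of $(q,u_0)$ gives, for each $\eta>0$, a control $v^\star$ with $\|v^\star\|_\infty<\eta$, $\mathcal F^{u_0}(v^\star)=\mathcal F^{u_0}(0)$, and $\mathcal F^{u_0}$ a submersion at $v^\star$. The submersion property is stable under the $C^1$ perturbation coming from replacing $\widehat X$ by the rescaled $\ell\,{\delta^{q}_{1/\ell}}_*X$ and from the small shift in target; hence by the implicit function theorem (applied to the Banach-space map on $\mathcal C_0$, which is $C^1$ for the $L^2$-topology as recalled in the proof of Lemma~\ref{L:restriction}) the perturbed equation has a solution $v$ with $\|v\|_\infty \le 2\eta$ for all $\ell$ small enough, the smallness threshold depending only on $q$ and $u_0$ — and by compactness of $f(K')\times u(K')$, uniformly. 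Undoing the rescaling, $v_j(s)=u_0 + \tfrac1\ell v(\tfrac{s-a}{\ell})\cdot\ell$… — more precisely $v_j(a+\ell\tau)=u_0+ v(\tau)$ after absorbing factors correctly — yields $\|v_j-u_0\|_\infty\le 2\eta$, and letting $\eta=\eta(\ell)\to0$ as $\ell\to0$ gives the required uniform $o(1)$ modulus.

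The main obstacle is making the perturbation/uniformity argument genuinely uniform over all the (infinitely many, arbitrarily small) gaps simultaneously: one needs that the "radius of validity" of the implicit function theorem at the submersion point $v^\star$ of $\mathcal F^{u_0}$ depends on $(q,u_0)$ only through a continuous quantity (norm of a right inverse of $D_{v^\star}\mathcal F^{u_0}$, and moduli of continuity of $D\mathcal F$), so that over the compact set of admissible $(q,u_0)$ it is bounded below, and that the convergence $\ell\,{\delta^{q}_{1/\ell}}_*X_i\to\widehat X_i$ together with $\delta^q_{1/\ell}(f(b))\to\mathcal F^{u_0}(0)_1$ is uniform in $q\in f(K')$ and in the choice of gap. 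The first is handled by choosing a finite cover of $f(K')\times u(K')$ by neighborhoods on which a common right inverse works; the second follows from the uniform statements in Lemma~\ref{L:our_distance_estimate} and Proposition~\ref{P:lim_dilation_forward} (which already give estimates uniform over sequences $a_n<b_n$ with $a_n,b_n\to l$), combined with the continuous dependence of privileged coordinates and nilpotent approximations on the base point. Once uniformity is secured, gluing the pieces is routine: $\gamma$ is absolutely continuous with continuous control, agrees with $f$ on $K'$ and has derivative $L$ there (the two-sided estimate forcing $\dot\gamma(t_0)=L(t_0)$ at each $t_0\in K'$, including accumulation points of gaps, exactly because $\|v_j-u(a_j)\|_\infty\to0$), hence is the desired $C^1_H$ extension. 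Patching the local pieces over a locally finite cover of $K$ and extending arbitrarily (horizontally) outside a neighborhood of $K$ completes the proof.
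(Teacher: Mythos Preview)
Your proposal is correct and follows the same overall route as the paper: reduce to a compact piece with a frame, fill each gap $(a,b)\subset K^c$ by a control whose smallness is obtained via rescaling (dilations centered at an endpoint), convergence of the rescaled endpoint maps to the nilpotent-approximation map $\mathcal{F}^{u}$, and strong pliability to solve the perturbed equation near a submersion point. Two implementation differences are worth noting. First, the paper uses a topological-degree argument (Lemma~\ref{lem:top-deg}) rather than the implicit function theorem; degree needs only local uniform convergence of $\mathcal{F}_n\to\mathcal{F}_\infty$, whereas your IFT version relies on $C^1$ convergence (which does hold, since $\ell\,{\delta^q_{1/\ell}}_*X_i\to\widehat{X}_i$ in $C^\infty_{\mathrm{loc}}$, but this has to be said). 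Second, and more substantively, the paper does not seek direct uniformity over $f(K')\times u(K')$ via a finite cover; instead it packages the uniformity as the sequential Lemma~\ref{L:limit_eta(an,bn)}: for any gaps $(a_n,b_n)$ with $a_n,b_n\to\tau_\infty\in K$, the minimal perturbation size $\eta([a_n,b_n])\to 0$. This uses strong pliability only at the single pair $(f(\tau_\infty),u(\tau_\infty))$ and sidesteps the question of how the non-canonical submersion point $v^\star$ and the associated IFT radius vary with $(q,u_0)$, a point your finite-cover argument would have to address explicitly. A minor slip: since you center the dilation at $q=f(a)$ and look at $\delta^q_{1/\ell}(f(b))$, the relevant convergence statement is Proposition~\ref{P:lim_dilation_backward}, not~\ref{P:lim_dilation_forward} (the paper itself centers at $f(b_n)$ and invokes the forward version).
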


\begin{proof}
Let $(f,L)$ satisfy the Whitney condition on a closed set $K$. Without loss of generality, $K$ is compact, there exists a global frame $(X_1,\dots,X_m)$ of $\Delta$ on $M$, and we can rewrite $L=X_u$ with $u:K\rightarrow \R^m$ continuous. 
Let us define $\bar{f}$ on  $K^c$, which is a countable and disjoint union of open intervals.

Let $(a,b)\subset K^c$ be such that $a,b\in K$. For any $\eta>0$, we define the set $\mathcal{P}_\eta([a,b])\subset C^0([a,b],\R^m)$ of controls $v\in C^0([a,b],B_\eta(0))$ such that the integral curve of $X_{u(a)+v}$ is a $C^1_H ([a,b])$ extension of $f$ on $[a,b]$. In other words for $v\in \mathcal{P}_\eta([a,b])$ we have
\begin{align*}
&v(a)=0,
\\
&v(b)=u(b)-u(a),
\\
& \| v\|_{\infty}<\eta,
\\
&f(b)=\left(\chronexp\int_a^b X_{u(a)+v(s)}\diff s\right) f(a).
\end{align*}
Notice that $\mathcal{P}_\eta([a,b])\subset \mathcal{P}_{\eta'}([a,b])$ if $0< \eta\leq \eta'$ and define 
$$
\eta([a,b])=|b-a|+\inf \left\{\eta>0 \mid \mathcal{P}_\eta([a,b])\neq \emptyset\right\}.
$$
We claim that  $\inf \left\{\eta>0 \mid \mathcal{P}_\eta([a,b])\neq \emptyset\right\}<+\infty$, that is, there exists $w\in C^0([a,b],\R^m)$ such that $w(a)=u(a)$, $w(b)=u(b)$ and 
$$
f(b) = \left(\chronexp\int_a^b X_{w(s)}\diff s\right) f(a).
$$
This can be deduced, for instance, from Lemma~\ref{lem:top-deg} in the Appendix
taking as $V$ the space of piecewise continuous controls on $[a,b]$ endowed with the $L^2$ topology, $W=\{w\in C^0([a,b],\R^m)\mid w(a)=u(a),\,w(b)=u(b)\}$, 
$\mathcal{F}(v)= \left(\chronexp\int_a^b X_{v(s)}\diff s\right) f(a)$,  and
$\mathcal{F}_n=\mathcal{F}|_W$, $z_n=f(b)$ for every $n\in\N$. 
The existence of $v\in V$ such that 
$\mathcal{F}$ is a submersion at $v$ is a 
standard consequence of the Lie bracket-generating condition (see, e.g., \cite{Sussmann1976}).

Denoting by $(-\infty, \bar{b})$ and $(\bar{a},+\infty)$ the two unbounded components of $\R\setminus K$, we set $\bar{f}$ to be such that
$$
\bar{f}(t)=
\left\{
\begin{array}{ll}
\exp \left(\left(t-\bar{b}\right) X_{u(\bar{b})} \right)\left(f(\bar{b})\right)
&
\text{ if } t<\bar{b},
\\
\exp \left(\left(t-\bar{a}\right) X_{u(\bar{a})} \right)\left(f(\bar{a})\right)
&       
\text{ if } t>\bar{a}.
\end{array}
\right.
$$
We complete the extension $(\bar{f},\bar{u})$ of $(f,u)$ on $\R$ by taking for each $(a,b)\subset K^c$ with $a,b\in  K$
some  $v\in \mathcal{P}_{\eta([a,b])}([a,b])$ and setting 
$$
\bar{u}(t)=u(a)+v(t), \quad \bar{f}(t)=\left(\chronexp\int_a^t X_{u(a)+v(s)}\diff s\right) f(a), \quad \forall t\in [a,b].
$$

By construction, $\bar{f}:\R\rightarrow M$ is an extension of $f$ 
and, for every $t\in \R$ such that $u$ is continuous at $t$, the derivative 
$\dot{\bar{f}}(t)$ exists and is equal to $X_{u(t)}(f(t))$. We are left to prove that $u$ is continuous on $\R$. Notice that by construction $\left. u\right|_K$ and $\left. u\right|_{K^c}$ are continuous. We then focus on the continuity of $u$ at points of $\partial K$.

Take $\tau_{\infty}\in \partial K$ and a sequence $(\tau_n)_n\subset \R\setminus K$ such that $\tau_n\rightarrow \tau_{\infty}$. For every $n$, let  $(a_n,b_n)\subset K^c$ be such that $a_n,b_n\in K$ and  $\tau_n\in (a_n,b_n)$. 
If there exists a constant subsequence $((a_{n_k},b_{n_k}))_{k\in \N}$ of $((a_{n},b_{n}))_{n\in \N}$ then $\lim_{k\rightarrow \infty}u(\tau_{n_k})=u(\tau_\infty)$ by the continuity of $\left.u\right|_{[a_{n_k},b_{n_k}]}$. Assume then, without loss of generality, that $\lim a_n=\lim b_n=\tau_\infty$.

Since 
$$
\left| u(\tau_n)-u(a_n)\right|\leq \eta([a_n,b_n])
\text{ and }
u(a_n)\rightarrow u(\tau_\infty),
$$
the proof of the theorem is concluded by Lemma~\ref{L:limit_eta(an,bn)} below.
\end{proof}

\begin{lemma}\label{L:limit_eta(an,bn)}
Let $(a_n)_n$ and $(b_n)_n$ be two sequences in $\partial K$ such that $(a_n,b_n)\subset\R\setminus K$. If $\lim a_n=\lim b_n=\tau_\infty\in\R$  then $\eta([a_n,b_n])\rightarrow 0$.
\end{lemma}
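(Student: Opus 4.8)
\emph{Plan.} The plan is to pass to a blow-up limit. Since $|b_n-a_n|\to 0$, it is enough to show that $\inf\{\eta>0\mid\mathcal P_\eta([a_n,b_n])\neq\emptyset\}\to 0$, i.e.\ that for each $\varepsilon>0$ one has $\mathcal P_\varepsilon([a_n,b_n])\neq\emptyset$ for $n$ large. Fix $\varepsilon>0$, put $t_n=b_n-a_n$, $q_\infty=f(\tau_\infty)$, $u_\infty=u(\tau_\infty)$ (recall $L=X_u$ with $u\colon K\to\R^m$ continuous), let $\mathbb G=(\R^d,(\widehat X_1,\dots,\widehat X_m))$ be the nilpotent approximation at $q_\infty$, and let $\Phi$ be the continuously varying privileged coordinates of Theorem~\ref{T:equiregular} near $q_\infty$. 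Reparametrizing $[a_n,b_n]$ onto $[0,1]$ and applying the dilation $\delta^{f(a_n)}_{1/t_n}$ (so that $f(a_n)\mapsto 0_{\mathbb G}$), one checks that a control $v$ lies in $\mathcal P_\varepsilon([a_n,b_n])$ if and only if $\tilde v\in\mathcal C_0$, where $\tilde v(s):=v(a_n+st_n)$, satisfies $\|\tilde v\|_\infty<\varepsilon$, $\tilde v(1)=w_n:=u(b_n)-u(a_n)$, and
\[
G_n(\tilde v):=\chronexp\int_0^1 t_n\,\bigl(\delta^{f(a_n)}_{1/t_n}\bigr)_*X_{u(a_n)+\tilde v(s)}\diff s\,(0_{\mathbb G})=z_n:=\delta^{f(a_n)}_{1/t_n}(f(b_n)).
\]
(For $n$ large and $\|\tilde v\|_\infty<\varepsilon$ the trajectories stay in the domain of $\Phi$, so this reformulation is legitimate.) Writing $\mathrm{ev}_1(v)=v(1)$, we must therefore solve $(G_n,\mathrm{ev}_1)(\tilde v)=(z_n,w_n)$ in $\tilde v\in\mathcal C_0$ with $\|\tilde v\|_\infty<\varepsilon$, for $n$ large.

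\emph{Blow-up limit.} As $f(a_n)\to q_\infty$ and $\Phi$ varies continuously, $t_n(\delta^{f(a_n)}_{1/t_n})_*X_i\to\widehat X_i$ in $C^\infty_{\mathrm{loc}}(\R^d)$ for each $i$ (local uniform convergence of the rescaled frame and of its derivatives, cf.\ \cite[Proposition~10.48]{ABB_nov_2016}, together with continuity of $\Phi$ in the base point). Hence, uniformly on bounded subsets of $\mathcal C_0$ and in $C^1$ for the $L^2$-topology on controls, $(G_n,\mathrm{ev}_1)$ converges to $\mathcal F^{u_\infty}\colon v\mapsto(F(v),v(1))$, with $F$ the endpoint map of Section~\ref{SSS:regular} for $u=u_\infty$. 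Moreover $w_n\to 0$ by continuity of $u$, and $z_n\to\e^{\widehat X_{u_\infty}}(0_{\mathbb G})=F(0)$: indeed $(f,L)$ satisfies $\mathcal P_X^B$ on a compact neighborhood of $\tau_\infty$ in $K$ (by the $C^1_H$-Whitney condition and Proposition~\ref{P:Wg<=>Wd}), so Proposition~\ref{P:lim_dilation_backward} applied with $l=\tau_\infty$ gives $\delta^{f(a_n)}_{1/t_n}(f(b_n))\to\e^{\widehat X_{u_\infty}}(0_{\mathbb G})$. Thus $(z_n,w_n)\to\mathcal F^{u_\infty}(0)$, and the reference control $0$ has $(G_n,\mathrm{ev}_1)(0)\to\mathcal F^{u_\infty}(0)$ as well.

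\emph{Conclusion.} Strong pliability of $(q_\infty,u_\infty)$ provides $v^*\in\mathcal C_0$ with $\|v^*\|_\infty<\varepsilon/2$, $\mathcal F^{u_\infty}(v^*)=\mathcal F^{u_\infty}(0)$ and $\mathcal F^{u_\infty}$ a submersion at $v^*$. Arguing as in the proof of Lemma~\ref{L:restriction}, pick continuous directions $\xi_1,\dots,\xi_{d+m}\in\mathcal C_0$ whose images under $D_{v^*}\mathcal F^{u_\infty}$ span $\R^{d+m}$, and apply Lemma~\ref{lem:top-deg} of the Appendix with $V=L^\infty([0,1],\R^m)$ in the $L^2$-topology, $W=\mathcal C_0$, $\mathcal F=\mathcal F^{u_\infty}$, $\mathcal F_n=(G_n,\mathrm{ev}_1)$, and targets $(z_n,w_n)\to\mathcal F^{u_\infty}(v^*)$. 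Since $(G_n,\mathrm{ev}_1)\to\mathcal F^{u_\infty}$ in the required sense and the targets converge to the value of $\mathcal F^{u_\infty}$ at the submersion point $v^*$, we obtain for $n$ large a solution $\tilde v_n\in\mathcal C_0$ of $(G_n,\mathrm{ev}_1)(\tilde v_n)=(z_n,w_n)$ with $\tilde v_n\to v^*$ in sup-norm (the finite-dimensional reduction along the $\xi_j$ keeps the correction small in $L^\infty$, not just in $L^2$). Hence $\|\tilde v_n\|_\infty<\varepsilon$ for $n$ large, so $\mathcal P_\varepsilon([a_n,b_n])\neq\emptyset$; as $\varepsilon>0$ was arbitrary and $|b_n-a_n|\to 0$, this gives $\eta([a_n,b_n])\to 0$.

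\emph{Main difficulty.} The delicate point is the uniform $C^1$-convergence $(G_n,\mathrm{ev}_1)\to\mathcal F^{u_\infty}$ needed to feed Lemma~\ref{lem:top-deg}: one must combine the convergence of the rescaled frames $t_n(\delta^{f(a_n)}_{1/t_n})_*X_i\to\widehat X_i$ with the continuous dependence of the privileged coordinates and of the nilpotent approximation on the drifting base point $f(a_n)\to q_\infty$, and then upgrade vector-field convergence to convergence of endpoint maps and their differentials, uniformly over controls bounded in $L^\infty$; the ancillary issue of producing a genuinely continuous $\tilde v_n$ with small sup-norm is handled as in Lemma~\ref{L:restriction}. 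It is precisely here that the full strength of strong pliability enters: only because the value $\mathcal F^{u_\infty}(v^*)=\mathcal F^{u_\infty}(0)$ equals the limit of the targets $(z_n,w_n)$, with $v^*$ moreover arbitrarily close to $0$, can local surjectivity at $v^*$ catch $(z_n,w_n)$ with a control of sup-norm below $\varepsilon$.
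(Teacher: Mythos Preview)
Your proof is correct and follows the same blow-up scheme as the paper, with one harmless symmetric twist: you center the dilation at $f(a_n)$ and invoke the backward characterization (Proposition~\ref{P:lim_dilation_backward}), whereas the paper centers at $f(b_n)$ and uses the forward one (Proposition~\ref{P:lim_dilation_forward}). Your choice is in fact a bit tidier: since $\delta^{f(a_n)}_{1/t_n}(f(a_n))=0_{\mathbb G}$, the rescaled endpoint maps converge directly to $\mathcal F^{u_\infty}$, while in the paper the starting point converges to $\e^{-\widehat X_{u(\tau_\infty)}}(0)$ and one must compose with the left-translation $\psi$ before recognizing $\mathcal F^{u_\infty}$. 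The targets differ accordingly---$(0,u(b_n))$ in the paper versus your $(z_n,w_n)$---but both converge to $\mathcal F^{u_\infty}(0)$ and the topological-degree argument is identical.

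One point to clean up: your stated choice $V=L^\infty([0,1],\R^m)$ with the $L^2$-topology in the application of Lemma~\ref{lem:top-deg} does not work as written, because $\mathrm{ev}_1$ (hence $\mathcal F^{u_\infty}$) is not continuous on that space, so the hypothesis $\mathcal F\in C^1(V,\R^{d+m})$ fails. The paper simply takes $V=W$ equal to (a neighborhood in) $\mathcal C_0$ with the sup norm; this is what you effectively do anyway once you fix the continuous directions $\xi_j\in\mathcal C_0$ and pass to the finite-dimensional reduction, and then the sup-norm smallness of $\tilde v_n$ is immediate without the extra justification you sketch at the end.
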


\begin{proof}
To prove the lemma, we show that if $\lim a_n=\lim b_n < \infty$  then 
$$
\inf \left\{\eta>0 \mid \mathcal{P}_\eta\left(\left[a_n,b_n\right]\right)\neq \emptyset\right\}\rightarrow0, 
\quad \text{ as } n\rightarrow \infty.
$$
Equivalently, given $\eta>0$, we prove that there exists $N(\eta)$ such that if $n>N(\eta)$ then there exists  $v_n\in C^0([a_n,b_n],\R^d)$ such that 
\renewcommand\arraystretch{1.5}
\begin{equation}\label{E:sol_cond}
\left\{
\begin{array}{>{\displaystyle} l}
v_n(a_n)=0,
\\
v_n(b_n)=u(b_n)-u(a_n),
\\
 \| v_n\|_{\infty}<\eta,
\\
f(b_n)=\chronexp\int_{a_n}^{b_n} X_{u(a_n)+v_n(s)}\diff s \left( f(a_n)\right).
\end{array}\right.
\end{equation}

By applying Theorem~\ref{T:equiregular} at $f(\tau_\infty)$, we pick a neighborhood $\Omega$ of $f(\tau_\infty)$ and a  continuously varying system of privileged coordinates
$$
\begin{array}{rccc}
\Phi:&\Omega\times \Omega &\longrightarrow & \R^d
\\
& (x,y)&\longmapsto& \Phi_x(y).
\end{array}
$$ 
For $x\in\Omega$ and $\lambda>0$, we associate with $\Phi$ the quasi-homo\-geneous dilation $\delta_\lambda^x$. 

There exists a neighborhood $\mathcal{V}$  of $0$ in $\mathcal{C}_0 =\{w\in C^0([0,1],\R^m) \mid w(0)= 0\}$ such that 
$$
\chronexp \displaystyle{\int_0^t }  (b_n-a_n) X_{u(a_n)+w(s)}  \mathrm{d}s \left(f(a_n)\right)\in \Omega
$$
for all $t\in[0,1]$, $w\in \mathcal{V}$ and $n$ large enough. Hence, setting $t_n=b_n-a_n$,
we define the endpoint map 
$$
\begin{array}{rccl}
\mathcal{F}_{n}:
&
\mathcal{V}
&
\longrightarrow
& 
\R^d\times \R^m
\\
&
w
&
\longmapsto
&
\left(
\delta_{1/t_n}^{f(b_n)}
\left(
\chronexp \displaystyle{\int_0^1 }  t_n X_{u(a_n)+w(s)}   \mathrm{d}s \left(f(a_n)\right)\right),
u(a_n)+w(1)
\right).
\end{array}
$$ 
If $\mathcal{F}_{n}(w)=\left(0,u(b_n)\right)$ and $\|w\|_\infty<\eta$
then
$[a_n,b_n]\ni s\mapsto w\left(\frac{s-a_n}{b_n-a_n}\right)$ satisfies \eqref{E:sol_cond} (recall that $\delta_{1/t_n}^{f(b_n)}  ( f(b_n) )=0$).
Then we are left to prove that there exists   $N(\eta)$  such that if $n>N(\eta)$, there exists $w_n\in \mathcal{V}$ such that  $\|w_n\|_{\infty}<\eta$ and 
$$
\mathcal{F}_n(w_n)=\left(0,u(b_n)\right).
$$

Distributing the dilation we get
$$
\mathcal{F}_n(w)=
\left(
\chronexp \displaystyle{\int_0^1 } \left( t_n {\delta_{1/t_n}^{f(b_n)}}_* X_{u(a_n)+w(s)} \right) \mathrm{d}s \, 
\left(
\delta_{1/t_n}^{f(b_n)}  ( f(a_n) )
\right),
u(a_n)+w(1)\right).
$$
The Whitney condition ensures that $\delta_{1/t_n}^{f(b_n)}  ( f(a_n) )\rightarrow \e^{-\widehat{X}_{u(\tau_\infty)}}(0)$ (see Propositions \ref{P:lim_dilation_forward} and \ref{P:Wg<=>Wd}), and 
$t_n {\delta_{1/t_n}^{f(b_n)}}_* X_{u(a_n)+w(s)}$ is bounded and  locally uniformly converges towards 
$ \widehat{ X }_{u(\tau_\infty)+w(s)}   $. 
Hence 
$\mathcal{F}_n$ locally uniformly converges towards
$$
\begin{array}{rccl}
\mathcal{F}_{\infty}:
&
\mathcal{V}
&
\longrightarrow
& 
\R^d\times \R^m
\\
&
w
&
\longmapsto
&
\left(
\chronexp \displaystyle{\int_0^1 } \widehat{ X }_{u(\tau_\infty)+w(s)}   \mathrm{d}s \left(\e^{-\widehat{X}_{u(\tau_\infty)}}(0)\right),
u(\tau_\infty)+w(1)
\right).
\end{array}
$$ 
Let  $\mathbb{G}=(\R^d,(\widehat{X}_1,\dots,\widehat{X}_m))$ be the Carnot group structure of the nilpotent approximation of $(M,\Delta,g)$ at $f(\tau_\infty)$. Denote by $*$ its group operation, and recall that horizontal vector fields on $\mathbb{G}$ are left-invariant with respect to $*$.
Then
$$
\mathcal{F}_\infty(w)=
\left(
\left(
\e^{-\widehat{X}_{u(\tau_\infty)}}(0)
\right)
*\chronexp \displaystyle{\int_0^1 } \widehat{ X }_{u(\tau_\infty)+w(s)}   \mathrm{d}s (0),
u(\tau_\infty)+w(1)
\right).
$$
With $\psi (g,u)=\left(\left(
\e^{-\widehat{X}_{u(\tau_\infty)}}(0)
\right)*g,u(\tau_\infty)+u\right)$, which is a diffeomorphism from $\R^d\times\R^m$ onto itself, we have that
$$
\mathcal{F}_\infty=\psi\circ \mathcal{F}^{u(\tau_\infty)},
$$
where $\mathcal{F}^{u(\tau_\infty)}$ stands for the map introduced in Definition~\ref{D:strong_pliability}.

Therefore, by the strong pliability hypothesis, 
there exists  
$w_\eta$ in $\mathcal{V}$  such that $\|w_\eta\|<\eta/2$, 
$
\mathcal{F}_\infty(w_\eta)
=\mathcal{F}_\infty(0)
$,
 and  
$\mathcal{F}_\infty$ is a submersion at  $w_\eta$.

Notice that 
$$ 
\mathcal{F}_\infty(0)=
\psi\circ \mathcal{F}^{u(\tau_{\infty})}(0)
=
\psi \left(
\e^{\widehat{X}_{u(\tau_\infty)}}(0), 0\right)
=\left( 
\e^{-\widehat{X}_{u(\tau_\infty)}}(0)
*
\e^{\widehat{X}_{u(\tau_\infty)}}(0)
,u(\tau_\infty)\right)=(0,u(\tau_\infty)),	
$$ 
again by applying the $*$-left-invariance of $\widehat{X}_{u(\tau_\infty)}$.

It follows from Lemma~\ref{lem:top-deg} in the Appendix, with $V=W=\mathcal{V}$, $\mathcal{F}=\mathcal{F}_\infty$ and $z_n=(0,u(b_n))$ for $n\in\N$, that 
given $\eta>0$, there exists $N(\eta)>0$ such that for all $n> N(\eta)$
the equation 
$\mathcal{F}_n(w_n)=(0,u(b_n))$ has a solution 
$w_n$ with $\|w_n\|_\infty<\eta$. This concludes the proof of the lemma.
\end{proof}

\begin{corollary}\label{C:step_2}
All step-2 sub-Riemannian manifolds have the $C^1_H$ extension property.
\end{corollary}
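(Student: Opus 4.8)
The plan is to deduce the corollary from Theorem~\ref{P:Strongly_pliable=>WT} by verifying strong pliability, handling the equiregular case directly and reducing the general case to it by desingularization. In the equiregular case the argument is immediate: if $(M,\Delta,g)$ is equiregular of step $2$ then $\Delta^2_q=T_qM$ for every $q$, so condition \eqref{E:medium_fat} holds trivially, $\Delta$ is medium-fat, and Corollary~\ref{P:step2_pliability} gives that every pair $(q,u)\in M\times\R^m$ is strongly pliable; Theorem~\ref{P:Strongly_pliable=>WT} then yields the $C^1_H$ extension property.

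For a general (possibly singular) step-$2$ manifold I would run the proof of Theorem~\ref{P:Strongly_pliable=>WT} locally. Let $(f,L):K\to M\times TM$ be continuous and satisfy the $C^1_H$-Whitney condition on a closed $K$; reducing, as in the proof of Theorem~\ref{P:Strongly_pliable=>WT}, to the case where $K$ is compact, and using that the Whitney condition is pointwise, near each relevant point there is an open $\Omega$ carrying a frame $X_1,\dots,X_m$, with $L=X_u$ and $u$ continuous. By \cite[Lemma~2.5]{jean2014control}, after shrinking $\Omega$ there is a desingularization $\psi:\widetilde\Omega\to\Omega$ with the same step, hence an equiregular step-$2$ structure, which is therefore medium-fat and on which every pair is strongly pliable. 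Proposition~\ref{P:lift} lifts the Whitney data to a continuous $(\widetilde f,\widetilde L)$ on $\widetilde\Omega$ satisfying $\mathcal{P}_{\widetilde X}$, and one then carries out the gap-filling construction upstairs: each connected component $(a,b)\subset K^c$ with $a,b\in K$ is bridged by a continuous control $v\in\mathcal{P}_\eta([a,b])$ steering $\widetilde f(a)$ to $\widetilde f(b)$, while continuity of the total control at points of $\partial K$ comes from $\eta([a_n,b_n])\to0$ whenever $a_n,b_n\to\tau_\infty\in\partial K$, exactly as in Lemma~\ref{L:limit_eta(an,bn)} --- the key point being that the nilpotent approximation of $\widetilde\Omega$ at $\widetilde f(\tau_\infty)$ is a Carnot group on which medium-fatness forces strong pliability of every pair. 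The control produced upstairs is then used verbatim downstairs: since $\psi_*\widetilde X_v=X_v$, the flow downstairs is the $\psi$-image of the flow upstairs, so one obtains a $C^1_H$ extension of $(f,L)$ on $M$, all distance bounds transferring via \eqref{E:proj_lift_dist}; this is, in essence, Corollary~\ref{C:projection_W_T} applied patch by patch.

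The main obstacle is bookkeeping rather than new ideas: one must check that the gap-filling and continuity-at-$\partial K$ analysis of Theorem~\ref{P:Strongly_pliable=>WT} and Lemma~\ref{L:limit_eta(an,bn)}, written for an equiregular manifold, goes through when performed on a \emph{local} desingularization while the curve to be extended lives on the base --- in particular that the dilation limit $\delta^{f(b_n)}_{1/(b_n-a_n)}(f(a_n))\to\e^{-\widehat X_{u(\tau_\infty)}}(0)$ of Propositions~\ref{P:lim_dilation_forward}--\ref{P:Wg<=>Wd} and the attached submersion argument transport correctly through $\psi$, and that the finitely many local patches and desingularizations covering $f(K)$ combine consistently, which they do because the construction fills the pairwise disjoint gaps of $K^c$ one at a time.
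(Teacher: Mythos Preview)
Your proposal is correct and follows essentially the same approach as the paper: the equiregular case is handled identically via Corollary~\ref{P:step2_pliability} and Theorem~\ref{P:Strongly_pliable=>WT}, and the singular case via local step-2 desingularization together with Corollary~\ref{C:projection_W_T}. Your treatment of the singular case simply unpacks in greater detail what the paper compresses into a direct citation of Corollary~\ref{C:projection_W_T}; no genuinely different idea is involved.
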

\begin{proof}
If the manifold $(M,\Delta,g)$ is equiregular,  all pairs $(q,u)\in M\times \R^m$ are strongly pliable (see section~\ref{SSS:regular}, Corollary~\ref{P:step2_pliability}), hence the result by Theorem~\ref{P:Strongly_pliable=>WT}.

If the manifold $(M,\Delta,g)$ is not equiregular, it can   be locally lifted to a step-2 equiregular manifold (see {\it e.g.} \cite[Section 2.4]{jean2014control}). By Corollary~\ref{C:projection_W_T}, since  equiregular step-2 sub-Riemannian manifolds have the $C^1_H$ extension property, so does $(M,\Delta,g)$.
\end{proof}

\section{Lusin approximation of horizontal curves}\label{S:Lusin}

Let $(M,\Delta,g)$ be a sub-Riemannian manifold, and let $(X_1,\dots,X_m)$ be a frame of the distribution.
As a consequence of \cite[Theorem 2]{Vodopyanov_2006_differentiability_curves}, we have the following Rademacher-type theorem.
\begin{theorem} 
Let $(M,\Delta,g)$ be an equiregular sub-Riemannian manifold and $(X_1, \dots ,X_m)$ be a frame of the distribution.
Let  $\gamma:[a,b]\rightarrow M$ be  an absolutely 
continuous horizontal curve on $M$.

Let $\Phi$ be a continuously varying system of privileged coordinates.
For almost every $t\in [a,b]$ there exists $u\in \R^m$, such that
$$
\lim_{h\rightarrow  0} 
\frac{1}{h}
\widehat{\mathrm{d}}_{\mathrm{SR}}
\left(
\Phi_{\gamma(t)}(\gamma(t+h)),
 \e^{h\widehat{X}_u} (0)
\right)=0,
$$
where $\widehat{\mathrm{d}}_{\mathrm{SR}}$ is the Carnot-Caratheodory distance for the sub-Riemannian structure on $\R^d$ having $(\widehat{X}_1,\dots,\widehat{X}_d)$ as a frame, with   $\mathbb{G}=(\R^d,(\widehat{X}_1,\dots,\widehat{X}_d))$ the nilpotent approximation of $(M,\Delta,g)$ at  $\gamma(t)$.
\end{theorem}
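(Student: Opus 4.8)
The plan is to deduce the statement from Vodopyanov's differentiability theorem for absolutely continuous curves in Carnot--Carath\'eodory spaces, \cite[Theorem~2]{Vodopyanov_2006_differentiability_curves}, after a reduction that uses only the scaling structure of the nilpotent approximation. Since the statement is local, I would fix a point of $M$ and a neighborhood $\Omega$ of it on which Theorem~\ref{T:equiregular} provides the continuously varying privileged coordinates $\Phi$ of the statement, together with ball-box constants, and on which some frame $(X_1,\dots,X_m)$ of $\Delta$ is defined. As $\gamma$ is absolutely continuous and horizontal, there is $u\in L^1([a,b],\R^m)$ with $\dot\gamma(s)=X_{u(s)}(\gamma(s))$ for a.e. $s$, and by the Lebesgue differentiation theorem almost every $t\in[a,b]$ is a Lebesgue point of $u$; fix such a $t$.

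First I would record the scaling identities that turn the asserted estimate into a blow-up statement. In the equiregular case the weights are locally constant, so the pseudo-norm is one fixed function $\|\cdot\|$ on $\R^d$; the nilpotent approximation $\mathbb{G}$ at $\gamma(t)$ is a Carnot group, whose Carnot--Carath\'eodory distance is homogeneous, $\widehat{\mathrm{d}}_{\mathrm{SR}}(\mathfrak{d}_\lambda x,\mathfrak{d}_\lambda y)=\lambda\,\widehat{\mathrm{d}}_{\mathrm{SR}}(x,y)$, and whose horizontal vector fields satisfy $\mathfrak{d}_{\lambda*}\widehat{X}_v=\lambda\widehat{X}_v$, so that $\e^{h\widehat{X}_v}(0)=\mathfrak{d}_{|h|}(\e^{\widehat{X}_v}(0))$ if $h>0$ and $\e^{h\widehat{X}_v}(0)=\mathfrak{d}_{|h|}(\e^{-\widehat{X}_v}(0))$ if $h<0$. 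Combining this with $\Phi_{\gamma(t)}(\gamma(t+h))=\mathfrak{d}_{|h|}\big(\delta^{\gamma(t)}_{1/|h|}(\gamma(t+h))\big)$ and the homogeneity of $\widehat{\mathrm{d}}_{\mathrm{SR}}$, one checks that $\tfrac1h\,\widehat{\mathrm{d}}_{\mathrm{SR}}(\Phi_{\gamma(t)}(\gamma(t+h)),\e^{h\widehat{X}_u}(0))\to0$ as $h\to0$ is equivalent to the pair of limits $\delta^{\gamma(t)}_{1/\varepsilon}(\gamma(t+\varepsilon))\to\e^{\widehat{X}_u}(0)$ and $\delta^{\gamma(t)}_{1/\varepsilon}(\gamma(t-\varepsilon))\to\e^{-\widehat{X}_u}(0)$ as $\varepsilon\to0^+$, the topology being the Euclidean one since $\widehat{\mathrm{d}}_{\mathrm{SR}}$ metrizes it. Thus it suffices to exhibit $u\in\R^m$ for which these two limits hold.

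This is precisely the content of Vodopyanov's theorem at a Lebesgue point $t$ of the control: the rescaled curves $s\mapsto\delta^{\gamma(t)}_{1/\varepsilon}(\gamma(t+\varepsilon s))$ converge, locally uniformly in $s$ as $\varepsilon\to0^+$, to a horizontal one-parameter subgroup curve of $\mathbb{G}$ through $0_{\mathbb{G}}$, which is necessarily of the form $s\mapsto\e^{s\widehat{X}_u}(0)$ because the horizontal left-invariant fields of $\mathbb{G}$ are exactly the $\widehat{X}_u$, $u\in\R^m$; evaluating at $s=1$ and $s=-1$ gives the two required limits (and in fact $\widehat{X}_u$ is the nilpotentization of $\dot\gamma(t)=X_{u(t)}(\gamma(t))$, so $u=u(t)$, though only existence is needed). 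I expect the main obstacle to be the bookkeeping that matches Vodopyanov's intrinsic metric differential --- phrased in terms of $\dsr$ on $M$ and of the pointed metric tangent cone at $\gamma(t)$ --- with the coordinate statement above measured in $\widehat{\mathrm{d}}_{\mathrm{SR}}$: this amounts to the pointed Gromov--Hausdorff convergence of $(M,\tfrac1\varepsilon\dsr,\gamma(t))$ to $(\mathbb{G},\widehat{\mathrm{d}}_{\mathrm{SR}})$, obtained by comparing, uniformly and at scale $\varepsilon$, both $\dsr$ and $\widehat{\mathrm{d}}_{\mathrm{SR}}$ with the common pseudo-norm $\|\cdot\|$ via Theorem~\ref{T:equiregular} and its Carnot-group counterpart, together with the uniform control of the transition maps $\Phi_{p'}\circ\Phi_p^{-1}$ as $p',p\to\gamma(t)$; equiregularity enters exactly here, to have constant weights and a genuinely continuously varying $\Phi$.
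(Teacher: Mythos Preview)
Your proposal is correct and aligned with the paper's approach: the paper does not give an independent proof of this theorem but states it directly ``as a consequence of \cite[Theorem 2]{Vodopyanov_2006_differentiability_curves}'', and your argument supplies precisely the bookkeeping (scaling identities, reduction to the blow-up limits $\delta^{\gamma(t)}_{1/\varepsilon}(\gamma(t\pm\varepsilon))\to\e^{\pm\widehat{X}_u}(0)$) needed to pass from Vodopyanov's intrinsic statement to the coordinate form recorded here. The only remark is that your final paragraph about matching the metric tangent cone to the $\widehat{\mathrm d}_{\mathrm{SR}}$-picture is more than what is strictly needed, since Vodopyanov's theorem already delivers the derivative in the tangent Carnot group, which in the equiregular setting is identified with $\mathbb{G}=(\R^d,(\widehat X_1,\dots,\widehat X_m))$ via $\Phi_{\gamma(t)}$.
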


We will use the following corollary.
\begin{corollary}\label{P:Rademacher}
Let $(M,\Delta,g)$ be a sub-Riemannian manifold. 
Let $[a,b]\subset \R$ and $\gamma:[a,b]\rightarrow M$ be an absolutely continuous  horizontal curve.
Then for almost every $t\in [a,b]$ there exists $u\in \R^m$ such that
\begin{equation}\label{E:Vodopyanov_coro}
\lim_{h\rightarrow  0} 
\frac{1}{h}
\dsr
\left(
\gamma(t+h),	\e^{h X_u}\gamma(t)
\right)=0.
\end{equation}
\end{corollary}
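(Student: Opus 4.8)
\emph{Reduction to the equiregular case.} Statement \eqref{E:Vodopyanov_coro} is local in $t$, so it suffices to prove it for almost every $t$ in a neighbourhood of an arbitrary $\bar t\in[a,b]$ and then cover $[a,b]$ by countably many such neighbourhoods. Fix $\bar t$; by \cite[Lemma~2.5]{jean2014control} there is a desingularization $\psi:\widetilde\Omega\to\Omega$ of $(M,\Delta,g)$ with $\gamma(\bar t)\in\Omega$ and $\widetilde\Omega$ an open subset of an equiregular sub-Riemannian manifold $\widetilde M$. Pick a control $\bar u$ with $\dot\gamma=X_{\bar u}$ near $\bar t$, let $(\widetilde X_1,\dots,\widetilde X_m)$ be the lifted frame, fix $\widetilde\gamma(\bar t)\in\psi^{-1}(\gamma(\bar t))$, and set $\widetilde\gamma(t)=\chronexp\int_{\bar t}^t\widetilde X_{\bar u(s)}\diff s(\widetilde\gamma(\bar t))$: this is an absolutely continuous horizontal curve in $\widetilde\Omega$. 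Since $\psi_*\widetilde X_i=X_i$, the submersion $\psi$ conjugates the flows, whence $\psi\circ\widetilde\gamma=\gamma$ and $\psi(\e^{h\widetilde X_u}\widetilde\gamma(t))=\e^{hX_u}\gamma(t)$ for every $u\in\R^m$; therefore, by \eqref{E:proj_lift_dist},
\[
\dsr\!\left(\gamma(t+h),\e^{hX_u}\gamma(t)\right)\le \dsrlift\!\left(\widetilde\gamma(t+h),\e^{h\widetilde X_u}\widetilde\gamma(t)\right),
\]
and it is enough to prove \eqref{E:Vodopyanov_coro} for $\widetilde\gamma$ on the equiregular manifold $\widetilde M$ (the resulting $u\in\R^m$ then works for $\gamma$ as well).

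\emph{Rescaling in privileged coordinates.} Assume now $M$ equiregular and fix, by Theorem~\ref{T:equiregular}, a continuously varying system of privileged coordinates $\Phi$ on a neighbourhood $\Omega$ of a point of $\gamma([a,b])$. By the Rademacher-type theorem above, for almost every $t$ with $\gamma(t)\in\Omega$ there is $u=u(t)\in\R^m$ such that, writing $p=\gamma(t)$, $\mathbb G=(\R^d,(\widehat X_1,\dots,\widehat X_d))$ for the nilpotent approximation at $p$, $\widehat{\mathrm d}_{\mathrm{SR}}$ for its Carnot-Caratheodory distance, and $\delta^p_\lambda=\mathfrak d_\lambda\circ\Phi_p$,
\[
\widehat{\mathrm d}_{\mathrm{SR}}\!\left(\Phi_p(\gamma(t+h)),\e^{h\widehat X_u}(0)\right)=o(h),\qquad h\to 0 .
\]
The fields $\widehat X_i$ being homogeneous of degree $-1$, $\widehat{\mathrm d}_{\mathrm{SR}}$ satisfies $\widehat{\mathrm d}_{\mathrm{SR}}(\mathfrak d_\lambda x,\mathfrak d_\lambda y)=\lambda\,\widehat{\mathrm d}_{\mathrm{SR}}(x,y)$ and $\mathfrak d_{1/h}(\e^{h\widehat X_u}(0))=\e^{\widehat X_u}(0)$; applying $\mathfrak d_{1/h}$ and dividing by $h$ turns the estimate into $\widehat{\mathrm d}_{\mathrm{SR}}(\delta^p_{1/h}(\gamma(t+h)),\e^{\widehat X_u}(0))=o(1)$, so $\delta^p_{1/h}(\gamma(t+h))\to\e^{\widehat X_u}(0)$. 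On the other hand $\delta^p_{1/h}(\e^{hX_u}p)=\e^{h(\delta^p_{1/h})_*X_u}(0)$ and $h(\delta^p_{1/h})_*X_u\to\widehat X_u$ locally uniformly (the convergence of dilated vector fields to the nilpotent approximation used in the proof of Proposition~\ref{P:lim_dilation_forward}; see \cite[Proposition~10.48]{ABB_nov_2016}), hence $\delta^p_{1/h}(\e^{hX_u}p)\to\e^{\widehat X_u}(0)$ as well. Combining the two limits with the triangle inequality for $\widehat{\mathrm d}_{\mathrm{SR}}$ and undoing the rescaling by $\mathfrak d_h$,
\[
\widehat{\mathrm d}_{\mathrm{SR}}\!\left(\Phi_p(\gamma(t+h)),\Phi_p(\e^{hX_u}p)\right)=o(h).
\]

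\emph{From $\widehat{\mathrm d}_{\mathrm{SR}}$ to $\dsr$.} To conclude, it remains to replace $\widehat{\mathrm d}_{\mathrm{SR}}$ by $\dsr$: since both $\gamma(t+h)$ and $\e^{hX_u}p$ tend to $p$, it suffices to know that there are a neighbourhood of $p$ and a constant $C$ such that $\dsr(q,q')\le C\,\widehat{\mathrm d}_{\mathrm{SR}}(\Phi_p(q),\Phi_p(q'))$ for $q,q'$ in that neighbourhood. This last comparison is the crux of the proof and the step I expect to be the main obstacle: by Theorem~\ref{T:equiregular} (applied at the moving base point $q$) one has $\dsr(q,q')\le C\|\Phi_q(q')\|_q$, while the Ball-Box theorem for the Carnot group $\mathbb G$ gives that $\widehat{\mathrm d}_{\mathrm{SR}}(\Phi_p(q),\Phi_p(q'))$ is comparable to the pseudo-norm of $\Phi_p(q)^{-1}*\Phi_p(q')$, where $*$ is the group law of $\mathbb G$; the two are tied together by the classical fact that the transition map $\Phi_q\circ\Phi_p^{-1}$ between privileged coordinate systems at nearby base points is, to leading graded order, the left translation by $\Phi_p(q)^{-1}$ in $\mathbb G$ (equivalently, the uniform Gromov-Hausdorff convergence of $(M,\tfrac1\varepsilon\dsr,p)$ to $(\mathbb G,\widehat{\mathrm d}_{\mathrm{SR}},0)$); see \cite[Section~7]{jean2014control} and \cite{bellaiche1996TagentSpace}. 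Granting this, the chain of estimates above yields $\dsr(\gamma(t+h),\e^{hX_u}p)=o(h)$, i.e.\ \eqref{E:Vodopyanov_coro} for $\widetilde\gamma$, hence for $\gamma$ by the first paragraph; the exceptional null set is the union of the one from the Rademacher-type theorem with the countably many null sets coming from the cover.
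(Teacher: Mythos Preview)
Your reduction to the equiregular case and your invocation of Vodopyanov's Rademacher theorem match the paper exactly. The intermediate step differs: you rescale by $\delta^p_{1/h}$ and use the triangle inequality in $\widehat{\mathrm d}_{\mathrm{SR}}$ to obtain
\[
\widehat{\mathrm d}_{\mathrm{SR}}\!\bigl(\Phi_p(\gamma(t+h)),\Phi_p(\e^{hX_u}p)\bigr)=o(h),
\]
whereas the paper inserts the intermediate point $\Phi_p^{-1}\bigl(\e^{h\widehat X_{u}}(0)\bigr)$ and uses the triangle inequality directly in $\dsr$, bounding each of the two resulting terms separately via estimates from Bella\"iche and Jean. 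Your rescaling route is clean and economical, and it reuses machinery already developed in the paper (the convergence of dilated fields used in Proposition~\ref{P:lim_dilation_forward}).

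The gap is precisely where you flag it. Your final step asserts a bi-Lipschitz comparison $\dsr(q,q')\le C\,\widehat{\mathrm d}_{\mathrm{SR}}(\Phi_p(q),\Phi_p(q'))$ on a fixed neighbourhood of $p$, and the justification you give (transition maps being left translations to leading graded order) is not enough to produce a uniform constant: the higher-order corrections in $\Phi_q\circ\Phi_p^{-1}$ can dominate when $q'$ is much closer to $q$ than $q$ is to $p$. The paper handles this with Bella\"iche's precise estimate \cite[Theorem~7.32]{bellaiche1996TagentSpace},
\[
\dsr(q,q')\le \widehat{\mathrm d}_{\mathrm{SR}}\bigl(\Phi_p(q),\Phi_p(q')\bigr)+C\,\widehat{\mathrm d}_{\mathrm{SR}}\bigl(0,\Phi_p(q')\bigr)\,\widehat{\mathrm d}_{\mathrm{SR}}\bigl(\Phi_p(q),\Phi_p(q')\bigr)^{1/r},
\]
which is \emph{not} a bi-Lipschitz bound: the error carries a sublinear power $1/r$. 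What saves the day is the extra prefactor $\widehat{\mathrm d}_{\mathrm{SR}}(0,\Phi_p(q'))$, which in the application (with $q'=\e^{hX_u}p$) is $O(h)$; then the error is $O(h)\cdot o(h)^{1/r}=o(h)$. This same estimate plugs directly into your argument with $q=\gamma(t+h)$ and $q'=\e^{hX_u}p$ and closes the gap without any further work. So your strategy is correct, but the last step should invoke Bella\"iche's inequality with its $O(h)$ prefactor rather than the bare bi-Lipschitz claim.
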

\begin{proof}
Let us first consider the equiregular case.
Let $t\in [a,b]$ be such that there exists $u(t)\in\R^m$ such that
$$
\lim_{h\rightarrow  0} 
\frac{1}{h}
\widehat{\mathrm{d}}_{\mathrm{SR}}
\left(
\Phi_{\gamma(t)}(\gamma(t+h)),
 \e^{h\widehat{X}_{u(t)}} (0)
\right)=0.
$$
Applying \cite[Theorem 7.32]{bellaiche1996TagentSpace} at $\gamma(t)$, there exist $\varepsilon>0$, $C>0$  such that, as soon as 
$$
\max (\dsr(\gamma(t),q),\dsr(\gamma(t),q'))\leq \varepsilon,
$$
we have
\begin{equation}\label{E:nilp_dist_vs_dist}
\dsr(q,q')
\leq 
\widehat{\mathrm{d}}_{\mathrm{SR}}
\left(
\Phi_{\gamma(t)}(q),
\Phi_{\gamma(t)}(q')
\right)
+
C
\widehat{\mathrm{d}}_{\mathrm{SR}}
\left(
0,
\Phi_{\gamma(t)}(q')
\right)
\widehat{\mathrm{d}}_{\mathrm{SR}}
\left(
\Phi_{\gamma(t)}(q),
\Phi_{\gamma(t)}(q')
\right)^{1/r}
\end{equation}
where $r$ is the step of $(M,\Delta,g)$.

For $|h|$ sufficiently small, $\Phi^{-1}_{\gamma(t)}\left(	\e^{h \widehat{X}_{u(t)}}(0)\right)$ is well defined and the triangular inequality yields
$$
\begin{aligned}
\dsr
\left(
\gamma(t+h),	\e^{h X_{u(t)}}\gamma(t)
\right)
\leq
& 
\dsr
\left(
\gamma(t+h),\Phi^{-1}_{\gamma(t)}\left(	\e^{h \widehat{X}_{u(t)}}(0)\right)
\right)
\\&+
\dsr
\left(
	\e^{h X_{u(t)}}\gamma(t),
	\Phi^{-1}_{\gamma(t)}\left(	\e^{h \widehat{X}_{u(t)}}(0)\right)
\right).
\end{aligned}
$$
As a consequence of \eqref{E:nilp_dist_vs_dist}, since 
$\widehat{\mathrm{d}}_{\mathrm{SR}}
\left(
0,
\e^{h \widehat{X}_{u(t)}}(0)
\right)
\leq  |h| |u(t)|$, in order to prove \eqref{E:Vodopyanov_coro} it is sufficient to have both
$$
\lim_{h\rightarrow  0} 
\frac{1}{h}
\widehat{\mathrm{d}}_{\mathrm{SR}}
\left(
\Phi_{\gamma(t)}(\gamma(t+h)),
 \e^{h\widehat{X}_{u(t)}} (0)
\right)=0
$$
and
$$
\lim_{h\rightarrow  0} 
\frac{1}{h}
\widehat{\mathrm{d}}_{\mathrm{SR}}
\left(
\Phi_{\gamma(t)}\left(	\e^{h X_{u(t)}}\gamma(t)\right),
 \e^{h\widehat{X}_{u(t)}} (0)
\right)=0.
$$
The first limit coincides with our assumption on $t$, and the second one is a consequence of two distance estimates for $h$ small enough.
First, from \cite[Propsition~7.26, Equation~(50)]{bellaiche1996TagentSpace},  
$$
\begin{aligned}
\frac{1}{C}
\widehat{\mathrm{d}}_{\mathrm{SR}}
\left(
\Phi_{\gamma(t)}\left(	\e^{h X_{u(t)}}\gamma(t)\right),
 \e^{h\widehat{X}_{u(t)}} (0)
\right)
\leq 
&
\left\|
\Phi_{\gamma(t)}\left(	\e^{h X_{u(t)}}\gamma(t)\right)-
 \e^{h\widehat{X}_{u(t)}} (0)
\right\|_{\gamma(t)}
\\
&+
\left\|
 \e^{h\widehat{X}_{u(t)}} (0)
\right\|_{\gamma(t)}^{1-1/r}
\left\|
\Phi_{\gamma(t)}\left(	\e^{h X_{u(t)}}\gamma(t)\right)-
 \e^{h\widehat{X}_{u(t)}} (0)
\right\|_{\gamma(t)}^{1/r}.
\end{aligned}
$$
Second, from
\cite[Theorem~2.3, Equation~(2.14)]{jean2014control},
$$
\left\|
\Phi_{\gamma(t)}\left(	\e^{h X_{u(t)}}\gamma(t)\right)-
 \e^{h\widehat{X}_{u(t)}} (0)
\right\|_{\gamma(t)}\leq C |u(t)|^{1+1/r} |h|^{1+1/r}.
$$
Combining the two,
$$
\widehat{\mathrm{d}}_{\mathrm{SR}}
\left(
\Phi_{\gamma(t)}\left(	\e^{h X_{u(t)}}\gamma(t)\right),
 \e^{h\widehat{X}_{u(t)}} (0)
\right)
\leq 
C 
|u(t)| |h|
\left(
|u(t)|^{1/r} |h|^{1/r}
+
|u(t)|^{1/r^2} |h|^{1/r^2}
\right),
$$
hence the result in the equiregular case.

If the manifold is not equiregular, as in  the proofs of Propositions~\ref{P:Taylor_exp} and Corollary~\ref{C:step_2}, we exploit the existence of local lifts of the sub-Riemannian structure that are equiregular. 
Consider an horizontal lift $\widetilde{\gamma}$ of $\gamma$. By the first part of the proof, we deduce that for almost every $t\in [a,b]$ there exists $u\in \R^m$ such that 
$$
\lim_{h\rightarrow  0} 
\frac{1}{h}
\dsrlift
\left(
\widetilde{\gamma}(t+h),	\e^{h \widetilde{X}_u}\widetilde{\gamma}(t)
\right)=0,
$$
where $\widetilde{X}_i$ is the lift of $X_i$ for every $1\leq i\leq m$.
Using \eqref{E:proj_lift_dist}, we deduce \eqref{E:Vodopyanov_coro}.
\end{proof}

Following  the classical scheme of proof for Lusin approximation  theorems (see \cite{le_donne_speight2016LusinStep2,speight2016LusinCarnot,juillet_sigalotti} for the case of Carnot groups), we give a version for general sub-Riemannian manifolds.

In the following we denote by $\mathcal{L}$ the Lebesgue measure on $\R$.
\begin{proposition}[Lusin approximation of an horizontal curve]\label{P:Lusin}
Let $(M,\Delta,g)$ be a sub-Riemannian manifold having the $C^1_H$ extension property and let $\gamma:[a,b]\rightarrow M$ absolutely continuous be an  horizontal curve. Then for any $\varepsilon>0$ there exists $K\subset [a,b]$ compact  with $\mathcal{L}([a,b]\setminus K)<\varepsilon$ and a curve $\gamma_1:[a,b]\rightarrow M$ of class $C_H^1$ such that  $\gamma$ and $\gamma_1$ coincide on $K$.
\end{proposition}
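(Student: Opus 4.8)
The plan is to extract from $\gamma$, after removing a set of arbitrarily small measure, a pair $(f,L)$ satisfying the $C^1_H$-Whitney condition of Definition~\ref{D:C1H_Whitney_cond}, and then to apply the $C^1_H$ extension property. Since $\gamma([a,b])$ is compact, I would first cover it by the domains $\Omega_1,\dots,\Omega_N$ of finitely many local frames $X^{(j)}=(X^{(j)}_1,\dots,X^{(j)}_m)$, put $A_j=\{t\in[a,b]:\gamma(t)\in\Omega_j\}$ (open in $[a,b]$) and disjointify them into $B_j=A_j\setminus\bigcup_{l<j}A_l$. By Corollary~\ref{P:Rademacher}, read in the frame $X^{(j)}$, there is a measurable $u^{(j)}$ on $A_j$ such that $\frac{1}{|h|}\dsr(\gamma(t+h),\e^{hX^{(j)}_{u^{(j)}(t)}}\gamma(t))\to0$ as $h\to0$, for a.e.\ $t\in A_j$. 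I then introduce the modulus
\[
g_j(t,\delta)=\sup_{0<|h|\le\delta}\frac{1}{|h|}\,\dsr\!\left(\gamma(t+h),\e^{hX^{(j)}_{u^{(j)}(t)}}\gamma(t)\right),
\]
which is measurable in $t$ (a countable supremum over rational $h$ of measurable maps, by continuity in $h$), nondecreasing in $\delta$, finite for a.e.\ $t$ (it is bounded by the Hardy--Littlewood maximal function of $t\mapsto g(\dot\gamma(t),\dot\gamma(t))^{1/2}$ plus $|u^{(j)}(t)|$), and, by the previous limit, infinitesimal as $\delta\to0^+$ for a.e.\ $t\in A_j$.

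Next I would apply Egorov's theorem to $g_j(\cdot,1/n)\to0$ on $B_j$, Lusin's theorem to $u^{(j)}$, and inner regularity of the Lebesgue measure, to obtain pairwise disjoint compact sets $K_j\subset B_j$ with $\mathcal{L}\!\left([a,b]\setminus\bigcup_j K_j\right)<\varepsilon$ such that $g_j(\cdot,\delta)\to0$ uniformly on $K_j$ as $\delta\to0^+$ and $u^{(j)}$ is continuous on $K_j$. I set $K=\bigcup_j K_j$ (compact), $f=\gamma|_K$, and $L|_{K_j}=X^{(j)}_{u^{(j)}}\circ\gamma$; since the $K_j$ are compact and pairwise disjoint, hence at positive mutual distance, they form a clopen partition of $K$, so $L$ is continuous on $K$.

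To verify the $C^1_H$-Whitney condition for $(f,L)$ on $K$, fix $t\in K$ and let $j$ be the unique index with $t\in K_j$. Since $\mathrm{dist}(K_j,K_l)>0$ for $l\neq j$ and $A_j$ is open, there is a small $\rho>0$ with $K':=K\cap[t-\rho,t+\rho]\subset K_j$ and $\gamma(K')\subset\Omega_j$, so $X^{(j)}$ is a local frame near $f(K')$ and $L|_{K'}=X^{(j)}_{u^{(j)}}\circ\gamma$ with $u^{(j)}|_{K'}$ continuous. Putting $\omega(\tau)=\tau\,\sup_{s\in K_j}g_j(s,\tau)$, which is $o(\tau)$ at $0^+$ by the uniform convergence on $K_j$, one has, writing $h=s-\sigma$, for all $s,\sigma\in K'$,
\[
\dsr\!\left(f(s),\e^{hX^{(j)}_{u^{(j)}(\sigma)}}f(\sigma)\right)
=\dsr\!\left(\gamma(\sigma+h),\e^{hX^{(j)}_{u^{(j)}(\sigma)}}\gamma(\sigma)\right)
\le|h|\,g_j(\sigma,|h|)\le\omega(|h|)=\omega(|s-\sigma|),
\]
so $(f|_{K'},L|_{K'})$ satisfies $\mathcal{P}_{X^{(j)}}$ on $K'$. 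Hence $(f,L)$ satisfies the $C^1_H$-Whitney condition on $K$, the $C^1_H$ extension property yields a $C^1_H$ curve $\widetilde{\gamma}\colon\R\to M$ with $\widetilde{\gamma}|_K=f=\gamma|_K$, and $\gamma_1=\widetilde{\gamma}|_{[a,b]}$ is the desired curve.

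The argument is essentially the classical Lusin scheme, so no single step is deep; the points needing care are that the modulus $g_j$ be measurable and a.e.\ finite (so that Egorov applies) and that the finitely many local frames be patched into the \emph{intrinsic, localized} Whitney condition of Definition~\ref{D:C1H_Whitney_cond}. I expect the latter to be the main annoyance, and it is handled cleanly here only because the sets $K_j$ can be taken pairwise disjoint --- hence separated --- so that a sufficiently small neighbourhood of $t$ in $K$ lies entirely in a single $K_j$ and can be treated with the single frame $X^{(j)}$.
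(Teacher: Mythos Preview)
Your proof is correct and follows the same classical Lusin scheme as the paper's proof: Rademacher (Corollary~\ref{P:Rademacher}) to obtain the horizontal derivative almost everywhere, then Lusin's theorem for continuity of $u$ and Egorov's theorem for uniform convergence of the difference quotient, yielding a compact set on which the $C^1_H$-Whitney condition holds. The paper's argument is terser and tacitly works with a single frame, whereas you explicitly patch finitely many local frames via the pairwise disjoint compact sets $K_j$ and address the measurability and a.e.\ finiteness of the modulus $g_j$; this extra care is appropriate but does not change the essential strategy.
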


\begin{proof}
Let $\varepsilon >0$.
We want to prove that there exists a compact set $K\subset [a,b]$ with $\mathcal{L}([a,b]\setminus K)<\varepsilon$ such that the $C^1_H$-Whitney condition holds for $(\gamma,\dot{\gamma})$ on $K$.
The proposition then follows from the $C^1_H$ extension property.

By Corollary~\ref{P:Rademacher}, there exists $A\subset [a,b]$ of full measure such that, for any $t\in A$, the curve $\gamma$ admits an horizontal derivative at $t$, denoted by $X_{u(t)}(\gamma(t))$ using the local frame $(X_1,\dots,X_m)$. Moreover, the family of functions over $A$, $(f_h)_{h\in(0,1)}$, defined by
$$
f_h(t)=\frac{1}{h}
\dsr
\left(
	\gamma(t+h),\e^{h X_{u(t)}}\gamma(t)
\right)
$$
pointwise converges to $0$ as $h\rightarrow 0$. Applying the classical Lusin Theorem to the map $u:A\to \R^m$, there exists a compact set $K\subset A$ such that $u$ is uniformly continuous on $K$ and $\mathcal{L}(A\setminus K)<\varepsilon/2$. 
Furthermore, by Egorov's Theorem, we have the uniform convergence of $(f_h)_h$ towards $0$ on a compact subset $K'\subset K$ such that $\mathcal{L}(K\setminus K')<\varepsilon/2$.

This implies that there exist $\omega:\R^+\rightarrow\R^+$ and $K'$ such that $\omega(t)=o(t)$ at $0^+$, $\mathcal{L}([a,b] \setminus K')<\varepsilon$, and 
$$
\dsr\left(\gamma(t),\e^{(t-s)X_{u(s)}}\gamma(s)	\right)\leq \omega(|t-s|),\quad \forall t,s\in K'.
$$
\end{proof}

The above result finds its application in the study of $1$-countably rectifiable sets (see \cite{le_donne_speight2016LusinStep2}).
A set $E\subset M$ is said to be \emph{$1$-countably rectifiable} if there exists a countable family of Lipschitz curves $f_k:\R\rightarrow M$ such that
$
\mathcal{H}^1
\left(
E\setminus \cup_k f_k(\R)
\right)=0
$, 
where $\mathcal{H}^1$ denotes the 1-dimensional Hausdorff measure.
\begin{corollary}\label{C:rectifiability}
Let $(M,\Delta,g)$ be a sub-Riemannian manifold having the $C^1_H$ extension property and let $E $  be a $1$-countably rectifiable subset of $M$. Then there exists a countable family of $C^1_H$ curves $f_k:\R\rightarrow M$ such that 
$
\mathcal{H}^1
\left(
E\setminus \cup_k f_k(\R)
\right)=0
$.
\end{corollary}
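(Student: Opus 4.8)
The plan is to reduce the statement to Proposition~\ref{P:Lusin} via a countable exhaustion argument, using the elementary fact that a curve which is Lipschitz for the Carnot--Carath\'eodory distance is a horizontal absolutely continuous curve with essentially bounded control (this is classical), and that Lipschitz maps send $\mathcal{L}$-negligible subsets of $\R$ to $\mathcal{H}^1$-negligible subsets of $M$. First I would use $1$-countable rectifiability of $E$ to fix a countable family of Lipschitz curves $g_j\colon\R\to M$ with $\mathcal{H}^1\bigl(E\setminus\bigcup_j g_j(\R)\bigr)=0$. Each $g_j$ is then a horizontal absolutely continuous curve, so each restriction $g_j|_{[-N,N]}$ is eligible for Proposition~\ref{P:Lusin}.

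Next, for every triple $(j,N,n)\in\N^3$ I would apply Proposition~\ref{P:Lusin} to $g_j|_{[-N,N]}$ with $\varepsilon=1/n$, obtaining a compact set $K_{j,N,n}\subset[-N,N]$ with $\mathcal{L}\bigl([-N,N]\setminus K_{j,N,n}\bigr)<1/n$ and a $C^1_H$ curve coinciding with $g_j$ on $K_{j,N,n}$; prolonging this curve past the endpoints of $[-N,N]$ along the integral curve of its (horizontal) velocity at those endpoints produces a $C^1_H$ curve $\gamma_{j,N,n}\colon\R\to M$ with $\gamma_{j,N,n}=g_j$ on $K_{j,N,n}$. The countable family I would exhibit is $\{\gamma_{j,N,n}\}_{(j,N,n)\in\N^3}$.

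Finally I would verify the covering property. For fixed $j,N$, a point $g_j(t)$ with $t\in[-N,N]$ lies in $\bigcup_{n}\gamma_{j,N,n}(\R)$ unless $t$ belongs to $B_{j,N}:=\bigcap_{n\in\N}\bigl([-N,N]\setminus K_{j,N,n}\bigr)$, which has Lebesgue measure $\le\inf_{n}1/n=0$. Since $g_j$ is Lipschitz, $\mathcal{H}^1\bigl(g_j(B_{j,N})\bigr)=0$, hence $\mathcal{H}^1\bigl(g_j([-N,N])\setminus\bigcup_{n}\gamma_{j,N,n}(\R)\bigr)=0$; letting $N\to\infty$ gives $\mathcal{H}^1\bigl(g_j(\R)\setminus\bigcup_{N,n}\gamma_{j,N,n}(\R)\bigr)=0$. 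Summing over the countably many indices $j$ and combining with $\mathcal{H}^1\bigl(E\setminus\bigcup_j g_j(\R)\bigr)=0$ yields $\mathcal{H}^1\bigl(E\setminus\bigcup_{j,N,n}\gamma_{j,N,n}(\R)\bigr)=0$, which is the claim.

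There is no genuine obstacle here: the argument is a routine union/exhaustion built on Proposition~\ref{P:Lusin}. The only point deserving a word of care is the very first one --- that a $\dsr$-Lipschitz curve is horizontal and absolutely continuous, so that Proposition~\ref{P:Lusin} applies to it --- and this is a classical fact in sub-Riemannian geometry that can be invoked directly.
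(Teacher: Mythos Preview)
Your argument is correct and is precisely the standard exhaustion built on Proposition~\ref{P:Lusin} that the paper has in mind. In fact the paper does not spell out a proof of Corollary~\ref{C:rectifiability} at all: it simply states the corollary after Proposition~\ref{P:Lusin} and points to \cite{le_donne_speight2016LusinStep2} for the analogous argument in Carnot groups, so your write-up fills in exactly what the authors leave implicit.
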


\section*{Appendix}

Let us present here a useful technical result based on standard topological degree  considerations.

\begin{lemma}\label{lem:top-deg}
Let $V$ be a normed space and $W$ an affine and dense subspace of $V$. 
Fix $v\in V$ and let $\mathcal{F}\in C^1(V,\R^d)$ be a submersion at $v$. 
Consider a sequence of functions $\mathcal{F}_n:W\to \R^d$ with the property that $\mathcal{F}_n$ locally uniformly converges to $\mathcal{F}|_{W}$. Let, moreover, $z_n$ be a sequence in $\R^d$ converging to $\mathcal{F}(v)$. 
Then there exists a sequence $w_n$ in $W$ converging to $v$ in $V$ and such that, for $n$ large enough, 
$\mathcal{F}_n(w_n)=z_n$. 
\end{lemma}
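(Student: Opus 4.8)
The plan is to prove this by a topological degree argument, which is the standard tool for preserving solvability of an equation under small perturbations of the map. First I would use the submersion hypothesis: since $\mathcal{F}\in C^1(V,\R^d)$ and $D_v\mathcal{F}:V\to\R^d$ is surjective, I can pick vectors $e_1,\dots,e_d\in V$ whose images $D_v\mathcal{F}(e_1),\dots,D_v\mathcal{F}(e_d)$ form a basis of $\R^d$. By density of $W$ in $V$ (and since $W$ is affine, so $W-v$ is a linear subspace containing arbitrarily good approximations of each $e_i$), I can replace the $e_i$ by vectors $e_i'\in W-v$ close enough that $D_v\mathcal{F}(e_1'),\dots,D_v\mathcal{F}(e_d')$ are still a basis. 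Restricting attention to the finite-dimensional affine slice $S=v+\mathrm{span}(e_1',\dots,e_d')\subset W$, the map $\mathcal{F}|_S$ is a $C^1$ map from a neighbourhood of $v$ in $S\cong\R^d$ to $\R^d$ with invertible differential at $v$; by the inverse function theorem it is a local diffeomorphism near $v$, so for a suitably small closed ball $\bar B\subset S$ around $v$, $\mathcal{F}(v)$ lies in the interior of $\mathcal{F}(\partial B)$'s complement and the topological degree $\deg(\mathcal{F}|_{\bar B},B,\mathcal{F}(v))=\pm1$.

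Next I would invoke stability of the degree under uniform perturbations. Set $m_0=\mathrm{dist}\big(\mathcal{F}(v),\mathcal{F}(\partial B)\big)>0$. Since $\mathcal{F}_n\to\mathcal{F}|_W$ locally uniformly and $\bar B$ is a (compact, because finite-dimensional and closed-bounded) subset of $W$, for $n$ large enough we have $\sup_{\bar B}|\mathcal{F}_n-\mathcal{F}|<m_0/2$. Similarly $z_n\to\mathcal{F}(v)$, so for $n$ large $|z_n-\mathcal{F}(v)|<m_0/2$, hence $z_n\notin\mathcal{F}_n(\partial B)$ and the straight-line homotopy $(1-\theta)\mathcal{F}+\theta\mathcal{F}_n$ (evaluated at points of $\bar B$) never attains $z_n$ on $\partial B$. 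By homotopy invariance and the fact that $z_n$ is in the same connected component of $\R^d\setminus\mathcal{F}(\partial B)$ as $\mathcal{F}(v)$ for $n$ large, $\deg(\mathcal{F}_n|_{\bar B},B,z_n)=\deg(\mathcal{F}|_{\bar B},B,z_n)=\deg(\mathcal{F}|_{\bar B},B,\mathcal{F}(v))=\pm1\neq0$. A nonzero degree forces the existence of $w_n\in B$ with $\mathcal{F}_n(w_n)=z_n$.

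Finally, to get $w_n\to v$, I would run the same argument with $B$ replaced by an arbitrarily small ball $B_\rho\subset S$ of radius $\rho$ around $v$: for each fixed $\rho>0$ there is $N(\rho)$ such that for $n\ge N(\rho)$ a solution $w_n\in B_\rho$ exists. Taking a decreasing sequence $\rho_k\to0$ and choosing $w_n$ in $B_{\rho_k}$ whenever $N(\rho_k)\le n<N(\rho_{k+1})$ produces a sequence $w_n\in W$ with $\mathcal{F}_n(w_n)=z_n$ for $n$ large and $w_n\to v$ in $V$. The main obstacle — really the only subtle point — is making sure the degree-theoretic machinery is legitimately applicable: one must restrict to a finite-dimensional slice so that topological degree in $\R^d$ is available, and one must check that the perturbation estimates are uniform on the (compact) slice, which is where local uniform convergence of $\mathcal{F}_n$ and the finite dimensionality of $S$ are both used. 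Everything else is a routine application of the inverse function theorem and the homotopy invariance of the degree.
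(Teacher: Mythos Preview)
Your overall strategy---reduce to a finite-dimensional affine slice via the submersion hypothesis, apply topological degree, then shrink the ball to force convergence---is exactly the paper's approach. However, there is a real gap in your construction of the slice. You assert that $W-v$ is a linear subspace, but this holds only when $v\in W$; the lemma assumes merely $v\in V$ with $W$ affine and dense, so in general $W-v$ is affine but not linear (it need not contain $0$). Consequently your slice $S=v+\operatorname{span}(e_1',\dots,e_d')$ need not be contained in $W$, and the maps $\mathcal{F}_n$, which are only defined on $W$, cannot be restricted to $S$. The degree argument then has no domain to act on.

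The paper repairs this by letting both the base point and the directions vary with $n$: it picks $v_n\in W$ with $v_n\to v$ and, writing $W_L=W-w_0$ for any fixed $w_0\in W$ (which \emph{is} linear), directions $\varphi_i^n\in W_L$ with $\varphi_i^n\to\phi_i$. Then $v_n+\sum_i x_i\varphi_i^n\in W$ for all $x\in\R^d$, so the finite-dimensional maps $\mathfrak{G}_n(x)=\mathcal{F}_n(v_n+\sum_i x_i\varphi_i^n)$ are well defined, and they converge locally uniformly to $\mathfrak{F}(x)=\mathcal{F}(v+\sum_i x_i\phi_i)$ by combining $v_n\to v$, $\varphi_i^n\to\phi_i$, the $C^1$ regularity of $\mathcal{F}$, and the hypothesis $\mathcal{F}_n\to\mathcal{F}|_W$. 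The degree argument and the shrinking-ball step then proceed as you describe, with the convergence $w_n\to v$ coming from both the smallness of the coefficient vector $x^n$ and $v_n\to v$. Your proof can be fixed along the same lines.
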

\begin{proof}
By assumption there exist $\phi_1,\dots,\phi_d\in V$ such that the map
$$
\begin{array}{rccl}
\mathfrak{F}:
&
\R^d
&
\longrightarrow
& 
\R^d
\\
&
(x_1,\dots,x_d)
&
\longmapsto
&
\mathcal{F}(v+x_1 \phi_1+\dots+x_d\phi_d)
\end{array}
$$ 
is a local diffeomorphism at $0$. 

Let $v_n$ be a sequence in $W$ converging to $v$ in $V$. 
Denote by $W_L$ the linear space $\{w-w'\mid w,w'\in W\}$ and   
consider, for each $i=1,\dots,d$, a sequence $\varphi_i^n$ in $W_L$ converging to 
$\phi_i$ in $V$.  Then the sequence of maps 
$$
\begin{array}{rccl}
\mathfrak{G}_n:
&
\R^d
&
\longrightarrow
& 
\R^d
\\
&
(x_1,\dots,x_d)
&
\longmapsto
&
\mathcal{F}_n(v_n+x_1 \phi_1^n+\dots+x_d\phi_d^n)
\end{array}
$$ 
locally uniformly converges to $\mathfrak{F}$. 

Let $r>0$ be small enough so that 
the restriction of $\mathfrak{F}$ to the ball $B_r(0)$ of center the origin and radius $r$ is a diffeomorphism between $B_r(0)$ and $\mathfrak{F}(B_r(0))$. 
Then $\mathfrak{G}_n|_{\overline{B_r(0)}}$ uniformly converges to $\mathfrak{F}|_{\overline{B_r(0)}}$. Hence,  
for any $K$ compactly contained in $\mathfrak{F}(B_r(0))$ and  for $n$ large enough, 
the topological degree $d(\mathfrak{G}_n,B_r(0),z)$ is equal to $1$ or $-1$ for every $z\in K$. In particular, choosing $K=\mathfrak{F}(\overline{B_{\rho r}(0)})$ with $\rho\in (0,1/2)$ and replacing $r$ by $2\rho r$ in the above argument, we have that 
for $n$ large enough, there exists $x^n\in B_{2\rho r}(0)$ such that 
$$\mathcal{F}_n(v_n+x_1^n \phi_1^n+\dots+x_d^n\phi_d^n)=\mathfrak{G}_n(x^n)=z_n.$$

In order to recover the convergence to $v$ of the sequence 
$v_n+x_1^n \phi_1^n+\dots+x_d^n\phi_d^n$ it suffices to 
notice that if $z_n$ is in $\mathfrak{F}(\overline{B_{\rho r}(0)})$, then $x^n$ can be chosen of norm smaller than $2\rho r$. The conclusion then follows from the convergence of $v_n$ to $v$ and the uniform boundedness of $\{\phi_j^n\mid j=1,\dots,d,\,n\in\N\}$. 
\end{proof}

\bibliographystyle{alphaabbr}
\bibliography{biblio}

\end{document}